\newtheorem{theorem}{Theorem}[section]
\newtheorem{lemma}[theorem]{Lemma}
\newtheorem{proposition}[theorem]{Proposition}
\newtheorem{corollary}[theorem]{Corollary}
\newtheorem{note}[theorem]{Note}
\newtheorem{Formula of adjoint functors}[theorem]{Formula of adjoint functors}
\newtheorem{example}[theorem]{Example}
\newtheorem{examples}[theorem]{Examples}
\newtheorem{definition}[theorem]{Definition}
\newtheorem{notation}[theorem]{Notation}
\DeclareMathOperator{\limi}{{lim}}
\newcommand{\ilim}[1]{\,\underset{#1}{\underset{\to}{\limi}}\,}
\newcommand{\plim}[1]{\,\underset{#1}{\underset{\leftarrow}{\limi}}\,}
\DeclareMathOperator{\Hom}{{Hom}}
\DeclareMathOperator{\Spec}{{Spec}}
\DeclareMathOperator{\Coker}{{Coker}}
\DeclareMathOperator{\Ker}{{Ker}}
\DeclareMathOperator{\Ima}{{Im}}
\begin{document}

\title{Reflexive functors of modules in Commutative Algebra}

\author{José Navarro}
\address[José Navarro]{Departamento de Matemáticas, Universidad de Extremadura,
Avenida de Elvas s/n, 06071 Badajoz, Spain}
\email{navarrogarmendia@unex.es}

\author{Carlos Sancho}
\address[Carlos Sancho]{Departamento de Matemáticas, Universidad de Salamanca,
Plaza de la Merced 1-4, 37008 Salamanca, Spain}
\email{mplu@usal.es}

\author{Pedro Sancho}
\address[Pedro Sancho]{Departamento de Matemáticas, Universidad de Extremadura,
Avenida de Elvas s/n, 06071 Badajoz, Spain}
\email{sancho@unex.es}
\thanks{Corresponding Author: Pedro Sancho.}

\date{April 12, 2012}

\begin{abstract}
Reflexive functors of modules naturally appear in Algebraic Geometry, mainly in the theory of linear representations of group schemes, and in ``duality theories''.
In this paper we study and determine reflexive functors of modules and we give many properties of reflexive functors of modules, of algebras and of bialgebras.
\end{abstract}

\maketitle

\section{Introduction}

Let $X=\Spec A$ be an affine scheme over a field $K$. We can regard $X$ as a covariant functor of
sets over the category of commutative $K$-algebras through its functor of points $X^\cdot$, defined by $X^\cdot(S):=\Hom_{K-alg}(A,S)$, for all commutative $K$-algebras $S$. If $X=\Spec K[x_1,\ldots,x_n]/(p_1,\ldots,p_m)$ then
$$X^\cdot(S):=\{s\in S^n\colon p_1(s)=\cdots=p_m(s)=0\}$$
By the Yoneda Lemma, $\Hom_{K-sch}(X,Y)=\Hom_{funct.}(X^\cdot,Y^\cdot)$, and
it is well known that $X$ is an affine group $K$-scheme if and only if $X^\cdot$ is a functor of groups.

We can regard $K$ as functor of rings $\mathcal K$,  by defining   $\mathcal K(S):=S$, for all commutative $K$-algebras $S$.
Let $V$ be a $K$-vector space. We can regard $V$ as a covariant functor of $\mathcal K$-modules, $\mathcal V$, by defining $\mathcal V(S):=V\otimes_K S$. We will say that $\mathcal V$ is the $\mathcal K$-quasi-coherent module associated with $V$.  If
$V=\oplus_IK$ then $\mathcal V(S)=\oplus_I S$. It holds that the category of $K$-vector spaces, ${\mathcal C}_{\text{K-vect}}$, is equivalent to the category of quasi-coherent $\mathcal K$-modules, ${\mathcal C}_{\text{qs-coh $\mathcal K$-mod}}$: the functors ${\mathcal C}_{\text{K-vect}} \rightsquigarrow {\mathcal C}_{\text{qs-coh $\mathcal K$-mod}}$, $V\rightsquigarrow \mathcal V$ and ${\mathcal C}_{\text{qs-coh $\mathcal K$-mod}}\rightsquigarrow{\mathcal C}_{\text{K-vect}},$ $\mathcal V\rightsquigarrow \mathcal V(K)$ give the equivalence.

It is well known that the theory of linear representations of a group scheme $G=\Spec A$ can be developed, via their associated functors, as a theory  of an abstract group and its linear representations. That is,   the category of linear representations of a group scheme $G$ is equivalent to the category of quasi-coherent $G^\cdot$-modules.

Given a functor of $\mathcal K$-modules, $\mathbb M$ (that is, a covariant functor from the category of commutative $K$-algebras to the category of abelian groups, with a structure of $\mathcal K$-module), we denote $\mathbb M^*:=\mathbb Hom_{\mathcal K}(\mathbb M,\mathcal K)$. We say that $\mathbb M$ is a reflexive functor of modules if $\mathbb M=\mathbb M^{**}$.

 Reflexive functors of modules naturally appear in Algebraic Geometry, mainly in the theory of linear representations of group schemes, and in ``duality theories'':
 Quasi-coherent modules are reflexive (surprisingly even when $K$ is a commutative  ring and $V$ is a non finitely generated $K$-module, see \cite{Amel}).
Let $\mathbb X$ be a functor of sets and $\mathbb A_{\mathbb X}:=\mathbb Hom_{funct.}(\mathbb X,\mathcal K)$. We say that $\mathbb X$ is an affine functor if  $\mathbb A_{\mathbb X}$ is reflexive and $\mathbb X=\Spec \mathbb A_{\mathbb X}:=\mathbb Hom_{\mathcal K-alg}(\mathbb A_{\mathbb X},\mathcal K)$, see \cite{navarro} for details (we warn the reader that  in the literature affine functors are sometimes defined to be functors of points of affine schemes). In \cite{navarro}, we prove that (functors of points of) affine  schemes, formal schemes and the completion of an affine scheme along a closed set
are affine functors. Let $\mathbb G$ be an affine functor of monoids. $\mathbb A_{\mathbb G}^*$ is a functor of algebras and the category of $\mathbb G$-modules is equivalent
to the category of $\mathbb A_{\mathbb G}^*$-modules. Applications of these results include Cartier duality, neutral Tannakian duality for affine group schemes and the equivalence between formal groups and Lie algebras in characteristic zero (see \cite{navarro}).
In order to prove these results it is necessary  to study and to determine reflexive functors of modules, algebras and bialgebras.

Some natural questions emerge:
Is the family of reflexive functors a monster family?  Is this family closed under tensor products? Is this family closed under homomorphisms?

In this paper we prove: \begin{enumerate}

\item Each reflexive functor of $\mathcal K$-modules is a functor of $\mathcal K$-submodules of a funtor of $\mathcal K$-modules $\prod_I\mathcal K$, for a certain set $I$ (see \ref{2125}).

\item A functor of $\mathcal K$-modules is reflexive if and only if it is the inverse limit of its quasi-coherent quotients (see \ref{RPQ}).

\item    If $I$ is a totally ordered set and $\{f_{ij}\colon \mathcal V_i\to \mathcal V_j\}_{i\geq j\in I}$ is an inverse system of quasi-coherent $\mathcal K$-modules, then $\plim{i\in I} \mathcal V_i$ is a reflexive functor of $\mathcal K$-modules.

\noindent We do not know if arbitrary inverse limits of quasi-coherent modules are reflexive, that is, if proquasi-coherent modules are reflexive.

\item If $\mathbb M$ and $\mathbb M'$ are reflexive functors of $\mathcal K$-modules, then
$\Hom_{\mathcal K}(\mathbb M,\mathbb M')\subseteq \Hom_{K}(\mathbb M(K),\mathbb M'(K))$ ({see \ref{QHom}}). If $\mathbb A$ is a reflexive functor and a functor of $\mathcal K$-algebras and
    $\mathbb M,\mathbb M'$ are reflexive functors of $\mathbb A$-modules, then a morphism
    of $\mathcal K$-modules $\mathbb M\to\mathbb M'$ is a morphism of $\mathbb A$-modules if and only if $\mathbb M(K)\to\mathbb M'(K)$ is a morphism of $\mathbb A(K)$-modules. Let $V$ be a vector space. If $\mathcal V$ is an $\mathbb A$-module, then the set of all quasi-coherent $\mathbb A$-submodules of $\mathcal V$ is equal to the set of all $\mathbb A(K)$-submodules of $V$
    (see \ref{P10} and \ref{P9}).
\end{enumerate}

 Now assume $K=R$ is a commutative ring. In section \ref{F}, we define a wide family $\mathfrak F$ of reflexive functors of $\mathcal R$-modules satisfying:

 \begin{enumerate}

 \item Each $\mathbb M\in \mathfrak F$ is a functor of $\mathcal R$-submodules of a funtor of $\mathcal R$-modules $\prod_I\mathcal R$, for a certain set $I$.

 \item If $M$ and $N$ are free $R$-modules, then $\mathcal M,\mathcal M^*,\mathbb Hom_{\mathcal R}(\mathcal M,\mathcal N)\in\mathfrak F$.

\item  Every functor of $\mathcal R$-modules $\mathbb M\in\mathfrak F$  is proquasi-coherent.

 \item  If $\mathbb M,\mathbb M'\in\mathfrak F$, then $\mathbb Hom_{\mathcal R}(\mathbb M,\mathbb M')\in\mathfrak F$ and
$(\mathbb M\otimes_{\mathcal R}\mathbb M')^{**}\in\mathfrak F$, which satisfies
     $$
     \Hom_{\mathcal R}((\mathbb M\otimes_{\mathcal R}\mathbb M')^{**},\mathbb {M''})=\Hom_{\mathcal R}(\mathbb M\otimes_{\mathcal R}\mathbb M',\mathbb {M''})$$
for every reflexive  functor of $\mathcal R$-modules, $\mathbb {M''}$.

\item If $\mathbb A,\mathbb B\in\mathfrak F$ are functors of proquasi-coherent algebras, then $(\mathbb A^*\otimes_{\mathcal R} \mathbb B^*)^*\in\mathfrak F$ and it is a functor of proquasi-coherent algebras, which satisfies
     $$\Hom_{\mathcal R-alg}((\mathbb A^*\otimes_{\mathcal R}\mathbb B^*)^*,\mathbb C)=
     \Hom_{\mathcal R-alg}(\mathbb A\otimes_{\mathcal R}\mathbb B,\mathbb C)$$
for every functor of proquasi-coherent algebras, $\mathbb C$.

\item The functor ${\mathcal C}_{\mathfrak F-bialg} \rightsquigarrow {\mathcal C}_{\mathfrak F-bialg}$, $ \mathbb B\rightsquigarrow \mathbb B^*$ is a categorical anti-equivalence, where
${\mathcal C}_{\mathfrak F-bialg}$ is the category of  functors of proquasi-coherent bialgebras (that is, $\mathbb B\in {\mathcal C}_{\mathfrak F-bialg}$ if $\mathbb B\in \mathfrak F$, $\mathbb B$ and $\mathbb B^*$ are functors of proquasi-coherent algebras and the dual morphisms of the multiplication morphism and unit morphism of $\mathbb B^*$ are morphisms of functors of algebras).

Let $A$ be a free $R$-module, then $\mathcal A\in \mathfrak F$. $A$ is an $R$-bialgebra if and only if $\mathcal A$ is a functor of proquasi-coherent bialgebras (see Proposition \ref{5.24}).
In the literature, there have been many attempts to obtain a well-behaved duality for non finite dimensional bialgebras (see \cite{Timmerman} and references
therein). One of them, for example, states that the functor that associates with  each bialgebra $A$ over a field $K$ the so-called dual bialgebra $A^\circ$ is auto-adjoint ($A^\circ:=\ilim{I\in J} (A/I)^*$, where $J$ is the set of bilateral ideals $I\subset A$ such that $\dim_K A/I<\infty$, see \cite{E}).
Another one associates with each bialgebra
$A$ over a pseudocompact ring $R$ the bialgebra $A^*$ endowed with a certain topology (see \cite[Exposé VII$_B$ 2.2.1]{demazure}).

\item If $\mathbb M,\mathbb M'\in\mathfrak F$, then $\Hom_{\mathcal R}(\mathbb M,\mathbb M')\subseteq \Hom_{R}(\mathbb M(R),\mathbb M'(R))$.
    If $\mathbb A\in \mathfrak F$ is a functor of $\mathcal R$-algebras and
    $\mathbb M,\mathbb M'\in\mathfrak F$ are functors of $\mathbb A$-modules, then a morphism
    of $\mathcal R$-modules $\mathbb M\to\mathbb M'$ is a morphism of $\mathbb A$-modules if and only if $\mathbb M(R)\to\mathbb M'(R)$ is a morphism of $\mathbb A(R)$-modules. Let $M$ be an $R$-module. If $\mathcal M$ is an $\mathbb A$-module, then the set of all quasi-coherent $\mathbb A$-submodules of $\mathcal M$ is equal to the set of all $\mathbb A(R)$-submodules of $M$.
    \end{enumerate}

This paper completes \cite{Amel} and it is essentially self contained.

\section{Preliminaries}

Let $R$ be a commutative ring (associative with a unit). All functors considered in this paper are covariant functors over the category of commutative $R$-algebras (always assumed to be associative with a unit). A functor $\mathbb X$ is said to be a functor of sets (resp. monoids, etc.) if $\mathbb X$ is a functor from the category of  commutative $R$-algebras to the category of sets (resp. monoids, etc.).

\begin{notation} For simplicity, given a functor of sets $\mathbb X$,
we sometimes use $x \in \mathbb X$  to denote $x \in \mathbb X(S)$. Given $x \in \mathbb X(S)$ and a morphism of commutative $R$-algebras $S \to S'$, we still denote by $x$ its image by the morphism $\mathbb X(S) \to \mathbb X(S')$.\end{notation}

Let $\mathcal R$ be the functor of rings defined by ${\mathcal R}(S):=S$, for all commutative $R$-algebras $S$.
A functor of sets $\mathbb M$ is said to be a functor of $\mathcal R$-modules if we have morphisms of functors of sets, $\mathbb M\times \mathbb M\to \mathbb M$ and ${\mathcal R}\times \mathbb M\to \mathbb M$, so that
$\mathbb M(S)$ is an $S$-module, for every commutative $R$-algebra $S$.
A functor of rings (associative with a unit), $\mathbb A$,
is said to be a functor of $\mathcal R$-algebras if we have a morphism of functors of rings
$\mathcal R\to \mathbb A$
(and $\mathcal R(S)=S$ commutes with all the elements of $\mathbb A(S)$, for every commutative $R$-algebra $S$).

Given a commutative $R$-algebra $S$, we denote by $\mathbb M_{|S}$ the functor $\mathbb M$ restricted to the category of commutative $S$-algebras.

 Let $\mathbb M$ and $\mathbb M'$ be functors of $\mathcal R$-modules.
 A morphism of functors of $\mathcal R$-modules $f\colon \mathbb M\to \mathbb M'$
 is a morphism of functors such that the defined morphisms $f_S\colon \mathbb M(S)\to
 \mathbb M'(S)$ are morphisms of $S$-modules, for all commutative $R$-algebras $S$.
 We will denote by $\Hom_{\mathcal R}(\mathbb M,\mathbb M')$ the  set (see footnote) of all morphisms of $\mathcal R$-modules from $\mathbb M$ to $\mathbb M'$.

\begin{example} \label{ex1} $\Hom_{\mathcal R}(\prod^{\mathbb N}\mathcal R, \mathcal R)=\oplus^{\mathbb N} R$: Let $w\colon \prod^{\mathbb N}\mathcal R\to \mathcal R$ be an $\mathcal R$-linear morphism. Let us consider the polynomial ring  $S:=R[x_1,\ldots,x_n,\ldots]$. Since
 $w_S((x_1,\ldots,x_n,\ldots))\in S$,
for some $m\in\mathbb N$, $w_S((x_1,\ldots,x_n,\ldots))\in R[x_1,\ldots,x_m]$. Let $$\phi(x_1,\ldots,x_m):=w_S((x_1,\ldots,x_n,\ldots)).$$ Let $S'$ be a commutative $R$-algebra and
$s'_1,\ldots,s'_n,\ldots \in S'$. Let us consider the  morphism of $R$-algebras
$S\to S'$, $x_i\mapsto s'_i$ (for all $i$). By functoriality, $$w_{S'}((s'_1,\ldots,s'_n,\cdots))=\phi(s'_1,\ldots,s'_m).$$
Hence, $w$ is determined by $\phi(x_1,\ldots,x_m)$.
Finally,
$\phi(x_1,\ldots, x_m)=\lambda_1 x_1+\cdots+\lambda_m x_m$, for certain
$\lambda_1,\ldots,\lambda_m\in R$, because
$$\phi(x\cdot x_1,\ldots,x\cdot x_m)=x\cdot \phi(x_1,\ldots,x_m)\in R[x,x_1,\ldots,x_m]=:T$$
since $w_T$ is $T$-linear.

\end{example}

We will denote by ${\mathbb Hom}_{\mathcal R}(\mathbb M,\mathbb M')$\footnote{In this paper, we will only  consider well defined functors ${\mathbb Hom}_{\mathcal R}(\mathbb M,\mathbb M')$, that is to say, functors such that $\Hom_{\mathcal S}(\mathbb M_{|S},\mathbb {M'}_{|S})$ is a set, for all $S$.} the functor of $\mathcal R$-modules $${\mathbb Hom}_{\mathcal R}(\mathbb M,\mathbb M')(S):={\rm Hom}_{\mathcal S}(\mathbb M_{|S}, \mathbb M'_{|S})$$  Obviously,
$$(\mathbb Hom_{\mathcal R}(\mathbb M,\mathbb M'))_{|S}=
\mathbb Hom_{\mathcal S}(\mathbb M_{|S},\mathbb M'_{|S})$$

\begin{notation} We denote $\mathbb M^*=\mathbb Hom_{\mathcal R}(\mathbb M,\mathcal R)$.\end{notation}

\begin{example} $(\oplus^{\mathbb N} \mathcal R)^{**}=(\prod^{\mathbb N} \mathcal R)^*\underset{\text{\ref{ex1}}}=\oplus^{\mathbb N}\mathcal R$.\end{example}

\begin{notation} Tensor products, direct limits, inverse limits, etc., of functors of $\mathcal R$-modules and  kernels, cokernels, images, etc.,  of morphisms of functors of $\mathcal R$-modules are regarded in the category of functors of $\mathcal R$-modules.\end{notation}

It holds that
$$\aligned & (\mathbb M\otimes_{\mathcal R} \mathbb M')(S)=\mathbb M(S)\otimes_{S} \mathbb M'(S),\,
(\Ker f)(S)=\Ker f_S,\, (\Coker f)(S)=\Coker f_S,\\
& (\Ima f)(S)=\Ima f_S,\, (\ilim{i\in I} \mathbb M_i)(S)=\ilim{i\in I} (\mathbb M_i(S)),\,
(\plim{i\in I} \mathbb M_i)(S)=\plim{i\in I} (\mathbb M_i(S))\endaligned $$

\begin{definition} Given an $R$-module $M$ (resp. $N$, etc.), ${\mathcal M}$  (resp. $\mathcal N$, etc.) will denote  the functor of $\mathcal R$-modules  defined by ${\mathcal M}(S) := M \otimes_R S$ (resp. $\mathcal N(S):=N\otimes_R S$, etc.). $\mathcal M$  will be called  quasi-coherent $\mathcal R$-module (associated with $M$).
\end{definition}

\begin{proposition} \cite[1.3]{Amel}\label{tercer}
For every functor of ${\mathcal R}$-modules $\mathbb M$ and every $R$-module $M$, it
holds that
$${\rm Hom}_{\mathcal R} ({\mathcal M}, \mathbb M) = {\rm Hom}_R (M, \mathbb M(R))$$
\end{proposition}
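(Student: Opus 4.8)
The plan is to exhibit a natural bijection between $\Hom_{\mathcal R}(\mathcal M,\mathbb M)$ and $\Hom_R(M,\mathbb M(R))$ and check it is well-defined in both directions. First I would construct the map from left to right: given an $\mathcal R$-linear morphism $f\colon \mathcal M\to\mathbb M$, evaluate it at $S=R$. Since $\mathcal M(R)=M\otimes_R R=M$, the component $f_R\colon M\to\mathbb M(R)$ is a morphism of $R$-modules, so we obtain an element of $\Hom_R(M,\mathbb M(R))$. This assignment $f\rightsquigarrow f_R$ is the candidate bijection.

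Next I would construct the inverse. Given an $R$-linear map $\phi\colon M\to\mathbb M(R)$, I must build an $\mathcal R$-linear morphism $\tilde\phi\colon\mathcal M\to\mathbb M$, i.e. for each commutative $R$-algebra $S$ an $S$-linear map $\tilde\phi_S\colon M\otimes_R S\to\mathbb M(S)$, natural in $S$. The key idea is to use the structure morphism $R\to S$, which induces $\mathbb M(R)\to\mathbb M(S)$; composing with $\phi$ gives $M\to\mathbb M(R)\to\mathbb M(S)$, and since $\mathbb M(S)$ is an $S$-module I can extend this $R$-linearly to an $S$-linear map out of the base change $M\otimes_R S$ by the universal property of $\otimes$, setting $\tilde\phi_S(m\otimes s):=s\cdot\bigl(\phi(m)\text{ viewed in }\mathbb M(S)\bigr)$. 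I would then verify that $\tilde\phi_S$ is $S$-linear and that the collection $\{\tilde\phi_S\}_S$ is natural in $S$; naturality follows from the functoriality of $\mathbb M$ applied to a morphism $S\to S'$, since the $\mathcal R$-module structure maps of $\mathbb M$ are morphisms of functors and hence commute with the transition maps.

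Finally I would check that the two assignments are mutually inverse. In one direction, starting from $\phi$, forming $\tilde\phi$, and evaluating at $R$ recovers $\phi$ because the structure map $R\to R$ is the identity and $M\otimes_R R=M$. In the other direction, starting from an $\mathcal R$-linear $f$, I must show that the morphism reconstructed from $f_R$ equals $f$; this is where the $\mathcal R$-linearity and naturality of $f$ are essential. The reconstructed map sends $m\otimes s$ to $s\cdot f_R(m)$, and I would compare this to $f_S(m\otimes s)$: naturality of $f$ along $R\to S$ shows $f_S(m\otimes 1)$ equals the image of $f_R(m)$ in $\mathbb M(S)$, and then $S$-linearity of $f_S$ (using $m\otimes s=s\cdot(m\otimes 1)$ in $\mathcal M(S)=M\otimes_R S$) gives $f_S(m\otimes s)=s\cdot f_S(m\otimes 1)$, matching the reconstruction.

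The step I expect to be the main obstacle is this last verification that $f$ is determined by $f_R$, since it is precisely here that one must combine naturality with $S$-linearity to pin down the value on a general element $m\otimes s$; the generators $m\otimes 1$ of $\mathcal M(S)$ as an $S$-module come from $\mathcal M(R)$, so every value of $f_S$ is forced by $f_R$ together with the module structure, which is the essential content of the statement.
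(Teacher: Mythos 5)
Your proof is correct and follows the same route as the paper: the paper's argument is exactly the observation that $f_S$ is determined by $f_R$ via the commutative square for $R\to S$ together with $S$-linearity, which is your final verification step. You merely spell out the inverse construction $\phi\mapsto\tilde\phi$ explicitly, which the paper leaves implicit.
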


\begin{proof}
Given an ${\mathcal R}$-linear morphism $f: {\mathcal M} \to \mathbb M$, we have for every
$R$-algebra $S$ a morphism of $S$-modules $f_S: M \otimes_R S \to
\mathbb M(S)$ and a commutative diagram $$\begin{matrix}
M \otimes_R S  & \stackrel{f_S}{\longrightarrow} & \mathbb M(S) \\
\uparrow & & \uparrow \\ M & \stackrel{f_R}{\longrightarrow} &
\mathbb M(R)
\end{matrix}$$ Hence, the morphism of $S$-modules $f_S$ is determined by $f_R$.
\end{proof}

The functors $M \rightsquigarrow {\mathcal M}$, ${\mathcal M} \rightsquigarrow {\mathcal M}(R)=M$ establish an equivalence between the category of $R$-modules and the category of quasi-coherent $\mathcal R$-modules (\cite[1.12]{Amel}). In particular, ${\rm Hom}_{\mathcal R} ({\mathcal M},{\mathcal M'}) = {\rm Hom}_R (M,M')$.
 For any pair of $R$-modules $M$ and $N$, the quasi-coherent module associated with $M\otimes_R N$ is $\mathcal M\otimes_{\mathcal R}\mathcal N$. ${\mathcal M}_{\mid S}$ is the quasi-coherent $\mathcal S$-module associated with $M \otimes_R
S$

\begin{definition} The functor ${\mathcal M}^* = {\mathbb Hom}_{\mathcal R} ({\mathcal M}, {\mathcal R})$ is  called an $\mathcal R$-module scheme. \end{definition}

${\mathcal M}^*(S)={\rm Hom}_S(M\otimes_RS,S)={\rm Hom}_R(M,S)$ and it is easy to check that $(\mathcal M^*)_{|S}$ is an $\mathcal S$-module scheme.

\begin{definition}
Given a commutative $R$-algebra $A$, let $({\rm
Spec}\,A)^\cdot$ be the functor defined by $({\rm Spec}\,A)^\cdot (S) := {\rm
Hom}_{\rm R-alg} (A, S)$, for each commutative $R$-algebra
$S$. This functor will be called the functor of points of ${\rm
Spec}\,A$.

By Yoneda's lemma (see \cite[Appendix A5.3]{eisenbud}), ${\rm
Hom}_{\rm func}(({\rm Spec}\,A)^\cdot, \mathbb X) = \mathbb X(A)$.
\end{definition}

Given an $R$-module $M$, we will denote by $S^\cdot_R M$ the
symmetric algebra of $M$. Let us recall the next well-known lemma
(see \cite[II, $\S 1$, 2.1]{gabriel} or \cite[Exposé VII$_B$,
1.2.4]{demazure}).

\begin{lemma} \cite[1.6]{Amel} \label{lema}
If $M$ is an $R$-module, then ${\mathcal M^*} = ({\rm
Spec}\,S^\cdot_R M)^\cdot$ as functors of ${\mathcal R}$-modules.
\end{lemma}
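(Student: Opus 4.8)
The plan is to reduce the statement to the universal property of the symmetric algebra. First I would recall that, by the adjunction between the symmetric-algebra functor and the forgetful functor from commutative $R$-algebras to $R$-modules, for every commutative $R$-algebra $S$ there is a bijection
$$(\Spec S^\cdot_R M)^\cdot(S) = \Hom_{R\text{-alg}}(S^\cdot_R M, S) = \Hom_R(M, S),$$
sending a homomorphism of $R$-algebras $S^\cdot_R M \to S$ to its restriction to the degree-one component $M = S^1_R M$. On the other side, the computation recorded just before the lemma gives $\mathcal M^*(S) = \Hom_S(M\otimes_R S, S) = \Hom_R(M, S)$. Composing these two identifications yields, for each $S$, a bijection $(\Spec S^\cdot_R M)^\cdot(S) = \mathcal M^*(S)$. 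Both identifications are built from the tensor--hom and symmetric-algebra adjunctions, each of which is natural in $S$; hence the two functors of sets coincide.

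It then remains to check that this identification is compatible with the $\mathcal R$-module structures, which first requires making explicit the $\mathcal R$-module structure on the functor of points $(\Spec S^\cdot_R M)^\cdot$. This structure comes from the bialgebra structure of the graded algebra $S^\cdot_R M$, whose comultiplication $\Delta$ and $R$-scalar action are determined on the generators $m \in M = S^1_R M$ by $\Delta(m) = m\otimes 1 + 1\otimes m$ (and the evident scaling of $m$). The module structure on $\mathcal M^*$ is, by contrast, the obvious one on $\Hom_R(M, S)$: pointwise addition of maps and multiplication of values by scalars of $S$. I would verify directly that addition transports correctly: given algebra homomorphisms $f, g : S^\cdot_R M \to S$, their sum is $(f\otimes g)\circ \Delta$ followed by the multiplication of $S$, and on a generator $m$ this equals $f(m)g(1) + f(1)g(m) = f(m) + g(m)$, using $f(1)=g(1)=1$; this is precisely the sum of the restrictions $f_{|M}$ and $g_{|M}$ in $\Hom_R(M, S)$. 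An analogous one-line check handles scalar multiplication. Therefore the bijection is an isomorphism of functors of $\mathcal R$-modules.

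The computations here are routine; the real content is the universal property of the symmetric algebra, which is exactly the input converting ``algebra homomorphisms out of $S^\cdot_R M$'' into ``linear maps out of $M$''. I expect no genuine obstacle beyond writing down the $\mathcal R$-module structure on $(\Spec S^\cdot_R M)^\cdot$ via the comultiplication $m \mapsto m\otimes 1 + 1\otimes m$, so that the comparison of module structures can be carried out; once that structure is fixed, both verifications are immediate.
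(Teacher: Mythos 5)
Your proof is correct and follows essentially the same route as the paper: both reduce the statement to the chain of identifications $\mathcal M^*(S)=\Hom_R(M,S)=\Hom_{R\text{-alg}}(S^\cdot_R M,S)=(\Spec S^\cdot_R M)^\cdot(S)$ furnished by the universal property of the symmetric algebra. The only difference is that you additionally spell out the compatibility of the $\mathcal R$-module structures via the comultiplication $m\mapsto m\otimes 1+1\otimes m$, a verification the paper leaves implicit.
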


\begin{proof}
For every commutative $R$-algebra $S$, it holds that $${\mathcal M^*}(S)= {\rm Hom}_R (M, S) = {\rm Hom}_{\rm R-alg} (S^\cdot_R M,
S) = ({\rm Spec}\,S^\cdot_R M)^\cdot(S)$$
\end{proof}

\begin{proposition}  \cite[1.8]{Amel}\label{prop4} \label{1.8Amel}
Let $M$, $M'$ be $R$-modules. Then $${\mathbb Hom}_{\mathcal R} ({\mathcal M^*}, {\mathcal M'}) = {\mathcal M} \otimes_{\mathcal R} {\mathcal M'}$$
\end{proposition}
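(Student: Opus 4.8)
The plan is to reduce the asserted equality of functors to a statement about $R$-points and then to establish the latter by Yoneda's lemma together with a homogeneity argument. First I would observe that it suffices to exhibit, naturally in the $R$-modules $M$ and $M'$, an isomorphism $\Hom_{\mathcal R}(\mathcal M^*,\mathcal M')=M\otimes_R M'$. Indeed, evaluating the functor $\mathbb Hom_{\mathcal R}(\mathcal M^*,\mathcal M')$ at a commutative $R$-algebra $S$ gives $\Hom_{\mathcal S}((\mathcal M^*)_{|S},(\mathcal M')_{|S})$, and by the remarks preceding this proposition $(\mathcal M^*)_{|S}$ is the $\mathcal S$-module scheme associated with $M\otimes_R S$, while $(\mathcal M')_{|S}$ is the quasi-coherent $\mathcal S$-module associated with $M'\otimes_R S$. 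Hence the $R$-point statement, applied over the base ring $S$, yields $\Hom_{\mathcal S}((\mathcal M^*)_{|S},(\mathcal M')_{|S})=(M\otimes_R S)\otimes_S(M'\otimes_R S)=(M\otimes_R M')\otimes_R S=(\mathcal M\otimes_{\mathcal R}\mathcal M')(S)$, and these identifications are compatible with morphisms $S\to S'$, giving the equality of functors.

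For the $R$-point statement I would use Lemma \ref{lema}, which identifies $\mathcal M^*=(\Spec S^\cdot_R M)^\cdot$ as a functor of sets. By Yoneda's lemma, every morphism of functors of sets $\phi\colon\mathcal M^*\to\mathcal M'$ is determined by the single element $e:=\phi_{B}(\xi)\in\mathcal M'(B)=M'\otimes_R S^\cdot_R M$, where $B:=S^\cdot_R M$ and $\xi\in\mathcal M^*(B)=\Hom_R(M,B)$ is the universal element, namely the canonical inclusion $M\hookrightarrow S^\cdot_R M$ in degree one; explicitly $\phi_S(w)=(\mathrm{id}_{M'}\otimes\tilde w)(e)$, where $\tilde w\colon B\to S$ is the $R$-algebra morphism corresponding to $w\in\Hom_R(M,S)$. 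In the other direction, any element $m\otimes m'\in M\otimes_R M'$ defines the morphism $\phi_S(w):=m'\otimes w(m)$, and one checks at once that it is $\mathcal R$-linear and that it corresponds under Yoneda to the class of $e$ in the degree-one summand $M'\otimes_R S^1_R M=M'\otimes_R M$. Thus I must show that $\mathcal R$-linearity of $\phi$ is equivalent to $e$ lying in this degree-one summand.

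The heart of the argument, and the step I expect to be the main obstacle, is to see that $\mathcal R$-linearity forces $e$ into degree one. I would exploit the grading of $S^\cdot_R M$ through its scaling action: writing $e=\sum_n e_n$ with $e_n\in M'\otimes_R S^n_R M$, I compute both sides of the linearity relation over the ring $B[t]$, applied to the scalar multiple $t\,\xi\in\mathcal M^*(B[t])$. On one hand, since $\xi$ sits in degree one and the algebra morphisms $\tilde w$ are multiplicative, the algebra morphism corresponding to $t\,\xi$ is the map $\mu_t\colon B\to B[t]$ that multiplies the homogeneous component $S^n_R M$ by $t^n$, so $\phi_{B[t]}(t\,\xi)=(\mathrm{id}_{M'}\otimes\mu_t)(e)=\sum_n t^n e_n$. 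On the other hand, $\mathcal R$-linearity gives $\phi_{B[t]}(t\,\xi)=t\cdot\phi_{B[t]}(\xi)=\sum_n t\,e_n$. Comparing coefficients of the powers of $t$ in $M'\otimes_R S^\cdot_R M[t]$ forces $e_n=0$ for $n\neq1$, so that $e=e_1\in M'\otimes_R M$. Together with the explicit inverse map and the additivity check of the previous paragraph, this produces a bijection $M\otimes_R M'=\Hom_{\mathcal R}(\mathcal M^*,\mathcal M')$ that is manifestly natural in $M$ and $M'$; combined with the first paragraph, this completes the proof. (As a consistency check, the case $M=\oplus^{\mathbb N}R$, $M'=R$ recovers Example \ref{ex1}.)
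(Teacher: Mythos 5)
Your proposal is correct and follows essentially the same route as the paper: identify $\mathcal M^*=(\Spec S^\cdot_R M)^\cdot$ via Lemma \ref{lema}, use Yoneda to see a morphism of functors as an element of $M'\otimes_R S^\cdot_R M$, force it into the degree-one component by a polynomial-variable scaling argument, and then obtain the functorial statement by applying the pointwise result over each base $S$. The only (immaterial) difference is that you run the scaling argument at the universal element over $S^\cdot_R M[t]$, while the paper runs it at an arbitrary $w\in\mathcal M^*(S)$ over $S[x]$.
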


\begin{proof} Note first that

$${\rm Hom}_{\rm func} ({\mathcal M^*}, {\mathcal M'}) \overset{\text{\ref{lema}}}=
{\rm Hom}_{\rm func} (({\rm
Spec}\,S^\cdot_R M)^\cdot, {\mathcal M'})={\mathcal M'}(S^\cdot_R M) = S^\cdot_R M \otimes_R M'.$$
Specifically, $f=m_1\cdots m_n\otimes m'\in S^n_RM\otimes M'$ defines the morphism
of functors $f\colon {\mathcal M^*}\to {\mathcal M'}$, $f(w):=w(m_1)\cdots w(m_n)\cdot m'$, for all $w\in\mathcal M^*$.

In
order for $f \in S^\cdot_R M \otimes_R M'$ to define a $\mathcal R$-linear
morphism of functors, it must be $f \in M \otimes_R M'$: Write $f=\sum_n f_n\in \oplus_{n\in\mathbb N} S_R^n M\otimes_R M'$. Let $S$ be a commutative $R$-algebra and let $w\in\mathcal M^*(S)$. Let $S'=S[x]$ and consider the natural injective morphism
$M'\otimes_R S\hookrightarrow M'\otimes_R S'$ as an inclusion of sets. Then,
$$\aligned x\cdot\sum_n  (f_n)_{S}(w)  & =x\cdot f_{S}(w)= x\cdot f_{S'}(w)  =
f_{S'}(x\cdot w)=\sum_n (f_n)_{S'}(x\cdot w)\\ & =\sum_n x^n\cdot (f_n)_{S'}(w)=
\sum_n x^n\cdot (f_n)_{S}(w)\in M'\otimes_R S'\endaligned$$
Hence $(f_n)_S(w)=0$, for all $n\neq 1$, and then $f\in M\otimes M'$.

We have proved that ${\rm
Hom}_{\mathcal R} ({\mathcal M^*}, {\mathcal M}') = M \otimes_R M'$.
For every $R$-algebra $S$,

\begin{equation*} \begin{split}
{\mathbb Hom}_{\mathcal R} ({\mathcal M^*}, {\mathcal M'})(S) & =  {\rm Hom}_S ({\mathcal M^*}_{\mid S}, {\mathcal M'}_{\mid S}) = {\rm Hom}_S (({\mathcal M}
\otimes_{\mathcal R} {\mathcal S})^*, {\mathcal M'} \otimes_{\mathcal R} {\mathcal S}) \\ & = (M
\otimes_R S) \otimes_S (M' \otimes_R S) = ({\mathcal M} \otimes_{\mathcal R} {\mathcal M'})(S)
\end{split}
\end{equation*}
\end{proof}

If $\mathcal M'=\mathcal R$, in the previous proposition, we obtain the following theorem.

\begin{theorem} \cite[1.10]{Amel}\label{reflex}
Let $M$ be an $R$-module. Then $${\mathcal M^{**}} = {\mathcal M}$$
\end{theorem}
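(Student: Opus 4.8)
The plan is to recognize the theorem as nothing more than the special case $\mathcal M'=\mathcal R$ of Proposition \ref{prop4}, since all the substantive work has already been done there. First I would unwind the definitions: by $\mathbb M^*=\mathbb Hom_{\mathcal R}(\mathbb M,\mathcal R)$ we have $\mathcal M^{**}=\mathbb Hom_{\mathcal R}(\mathcal M^*,\mathcal R)$, so the object in question is exactly $\mathbb Hom_{\mathcal R}(\mathcal M^*,\mathcal M')$ of Proposition \ref{prop4} evaluated at $\mathcal M'=\mathcal R$.

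Next I would record the two trivial identifications needed to align notation. The quasi-coherent module associated with the $R$-module $R$ is $\mathcal R$ itself, since on $S$-points $R\otimes_R S=S=\mathcal R(S)$; and, using that the quasi-coherent module associated with $M\otimes_R N$ is $\mathcal M\otimes_{\mathcal R}\mathcal N$, the module $\mathcal M\otimes_{\mathcal R}\mathcal R$ is the one associated with $M\otimes_R R=M$, namely $\mathcal M$. With these in place, Proposition \ref{prop4} yields at once
$$\mathcal M^{**}=\mathbb Hom_{\mathcal R}(\mathcal M^*,\mathcal R)=\mathcal M\otimes_{\mathcal R}\mathcal R=\mathcal M.$$

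There is essentially no obstacle left at this stage; the entire content of the statement resides in Proposition \ref{prop4}, whose heart is the degree-one argument (using the auxiliary polynomial variable $x$ and the scaling $w\mapsto x\cdot w$) that forces a morphism $\mathcal M^*\to\mathcal M'$ to originate from $M\otimes_R M'$ rather than from the full symmetric-algebra term $S^\cdot_R M\otimes_R M'$. The only point I would be careful to verify is the bookkeeping: that the chain of natural isomorphisms above is induced by the canonical biduality morphism $\mathcal M\to\mathcal M^{**}$, $m\mapsto(w\mapsto w(m))$, so that the asserted equality $\mathcal M^{**}=\mathcal M$ is realized by the expected evaluation map and not merely by some abstract isomorphism. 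Concretely, this means tracing an element $m\in\mathcal M(S)$ through the identifications of \ref{prop4} and confirming that it lands on evaluation-at-$m$, which is routine.
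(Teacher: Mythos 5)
Your proposal is correct and is exactly the paper's own argument: the theorem is stated as the specialization $\mathcal M'=\mathcal R$ of Proposition \ref{prop4}, giving $\mathcal M^{**}=\mathbb Hom_{\mathcal R}(\mathcal M^*,\mathcal R)=\mathcal M\otimes_{\mathcal R}\mathcal R=\mathcal M$. Your extra remark about checking that the identification is induced by the canonical evaluation morphism is a sensible refinement the paper leaves implicit.
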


 The functors $\mathcal M\rightsquigarrow \mathcal M^*$ and $\mathcal M^{*}\rightsquigarrow \mathcal M^{**}=\mathcal M$ establish an anti-equivalence between the categories of quasi-coherent modules
and module schemes. An $\mathcal R$-module scheme ${\mathcal M}^*$ is a quasi-coherent $\mathcal R$-module if and only if $M$ is a projective finitely generated  $R$-module (see \cite{Amel2}).

Let us recall the Formula of adjoint functors.

\begin{notation} Let $i\colon R\to S$ be a commutative $R$-algebra.
Given a functor of $\mathcal R$-modules, $\mathbb M$, let $i^* \mathbb M$  be the functor of
$\mathcal S$-modules defined by
$(i^* \mathbb M)(S') := \mathbb M(S')$.
Given a functor of $\mathcal S$-modules, $\mathbb M'$, let $i_* \mathbb M'$ be the functor of
$\mathcal R$-modules defined by
$(i_* \mathbb M')(R') := \mathbb M(S \otimes_R R')$.
\end{notation}

\begin{Formula of adjoint functors} \cite[1.12]{Amel}\label{adj}
Let $\mathbb M$ be a functor of ${\mathcal R}$-modules and let $\mathbb M'$ be a functor of  $\mathcal S$-modules. Then, it holds that
$${\rm Hom}_{\mathcal S} (i^*\mathbb M, \mathbb M') = {\rm Hom}_{\mathcal R} (\mathbb M, i_*\mathbb M')$$
\end{Formula of adjoint functors}

\begin{proof}
Given a $w \in {\rm Hom}_S (i^*\mathbb M, \mathbb M' )$, we have morphisms $w_{S
\otimes R'} : \mathbb M(S \otimes R') \to \mathbb M'(S \otimes R')$ for each
commutative $R$-algebra $R'$. By composition with the morphisms $\mathbb M(R') \to \mathbb M(S
\otimes R')$, we have the morphisms $\phi_{R'}: \mathbb M(R') \to \mathbb M'(S
\otimes R') = i_* \mathbb M'(R')$, which in their turn define $\phi \in
{\rm Hom}_{\mathcal R} (\mathbb M, i_*\mathbb M')$.

Given a $\phi \in {\rm Hom}_{\mathcal R} (\mathbb M, i_*\mathbb M')$, we have morphisms
$\phi_{S'}: \mathbb M(S') \to i_*\mathbb M'(S')=\mathbb M'(S \otimes S')$ for each
$S$-algebra $S'$. By composition with the morphisms $\mathbb M'(S \otimes
S') \to \mathbb M'(S')$, we have the morphisms $w_{S'} : \mathbb M(S') \to \mathbb M'(S')$,
which in their turn define $w \in {\rm Hom}_S (i^*\mathbb M, \mathbb M')$.

Now we shall show that the assignments $w \mapsto \phi$ and $\phi \mapsto w$ are
mutually inverse. Given $w \in {\rm Hom}_S (i^*\mathbb M, \mathbb M')$ we have
$\phi \in {\rm Hom}_{\mathcal R} (\mathbb M, i_*\mathbb M')$. Let us prove that the latter
defines $w$ again. We have the following diagram, where $S'$ is a commutative
$S$-algebra and $q$, $p$ the obvious morphisms,

$$\xymatrix{ \mathbb M(S') \ar[r]^-q \ar@{=}[rd] & \mathbb M(S\otimes S') \ar[d] \ar[r]^-{w_{S\otimes S'}} & \mathbb M'(S\otimes
S') \ar[d]^-p \\ & \mathbb M(S') \ar[r]^-{w_{S'}} & \mathbb M'(S')}$$

The composite morphism $p\circ w_{S\otimes S'}\circ q=p \circ
\phi_{S'}$ is that assigned to $\phi$, and coincides with $w_{S'}$
since the whole diagram is commutative.

Given $\phi \in {\rm Hom}_{\mathcal R} (\mathbb M, i_*\mathbb M')$ we have $w \in {\rm Hom}_{\mathcal S}
(i^*\mathbb M, \mathbb M')$. Let us see that the latter defines $\phi$. We have the
following diagram, where $R'$ is a commutative $R$-algebra and $r,j,p$ the obvious morphisms,

$$\xymatrix{ \mathbb M(R') \ar[r]^-r \ar[d]^-{\phi_{R'}} &
\mathbb M(S\otimes R') \ar[r]^-{w_{ S\otimes R'}} \ar[d]^-{\phi_{S\otimes R'}} &
\mathbb M'(S\otimes R')\\ (i_*\mathbb M')(R') \ar[r]^-j & (i_*\mathbb M')(S\otimes R') \ar@{=}[r] &
\mathbb M'(S\otimes S\otimes R') \ar[u]^-{p} }$$

The composite morphism $w_{S\otimes R'} \circ r$ assigned to $w$
agrees with $\phi_{R'}$, since $p \circ j = Id$ and the whole
diagram is commutative.

\end{proof}

\begin{corollary} \label{adj2} Let $\mathbb M$ be a functor of $\mathcal R$-modules. Then
$$\mathbb M^*(S)=\Hom_{\mathcal R}(\mathbb M,\mathcal S)$$
for all commutative $R$-algebras $S$.\end{corollary}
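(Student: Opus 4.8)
The plan is to reduce the statement entirely to the Formula of adjoint functors \ref{adj}, applied to the structural morphism $i\colon R\to S$. First I would unwind the definition of the dual. By \ref{adj2}'s hypothesis $\mathbb M^*=\mathbb Hom_{\mathcal R}(\mathbb M,\mathcal R)$, so evaluating at $S$ and using the defining formula $\mathbb Hom_{\mathcal R}(\mathbb M,\mathbb M')(S)=\Hom_{\mathcal S}(\mathbb M_{|S},\mathbb M'_{|S})$ gives
$$\mathbb M^*(S)=\Hom_{\mathcal S}(\mathbb M_{|S},\mathcal R_{|S}).$$
The whole task is then to recognize the two restricted functors on the right in terms of the $i^*/i_*$ pair.

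Second, I would observe the two identifications on the $\mathcal S$-side. By the very definition of $i^*$ one has $\mathbb M_{|S}=i^*\mathbb M$, and likewise $\mathcal R_{|S}=i^*\mathcal R$, which is nothing but the functor of rings $\mathcal S$ on the category of $S$-algebras, since $(i^*\mathcal R)(S')=\mathcal R(S')=S'$. Hence the identity above reads $\mathbb M^*(S)=\Hom_{\mathcal S}(i^*\mathbb M,\,i^*\mathcal R)$.

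Third, I would compute $i_*i^*\mathcal R$ and claim it is exactly the quasi-coherent $\mathcal R$-module $\mathcal S$ associated with the $R$-module $S$. Indeed, for every commutative $R$-algebra $R'$,
$$(i_*i^*\mathcal R)(R')=(i^*\mathcal R)(S\otimes_R R')=S\otimes_R R'=\mathcal S(R'),$$
and the $R'$-module structures agree, so $i_*i^*\mathcal R=\mathcal S$. Applying the Formula of adjoint functors \ref{adj} with $\mathbb M'=i^*\mathcal R$ now yields
$$\Hom_{\mathcal S}(i^*\mathbb M,\,i^*\mathcal R)=\Hom_{\mathcal R}(\mathbb M,\,i_*i^*\mathcal R)=\Hom_{\mathcal R}(\mathbb M,\mathcal S),$$
which, combined with the identity from the first two steps, gives the desired equality $\mathbb M^*(S)=\Hom_{\mathcal R}(\mathbb M,\mathcal S)$.

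I do not expect a genuine obstacle here: the only piece of actual content is the identification $i_*i^*\mathcal R=\mathcal S$, i.e. that pushing and pulling the functor of rings along $R\to S$ recovers the quasi-coherent module of $S$, together with the verification that the $\mathcal S$-module and $\mathcal R$-module structures are the intended ones. Everything else is bookkeeping with the adjunction already established in \ref{adj}. The mildest care needed is to check that these identifications are natural in $S$, should one wish to read the corollary as an isomorphism of functors in $S$ rather than merely a set-theoretic equality for each fixed $S$.
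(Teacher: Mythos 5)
Your proposal is correct and is essentially the paper's own proof, which reads $\mathbb M^*(S)=\Hom_{\mathcal S}(\mathbb M_{|S},\mathcal S)\overset{\text{\ref{adj}}}=\Hom_{\mathcal R}(\mathbb M,\mathcal S)$; you have merely made explicit the identifications $\mathbb M_{|S}=i^*\mathbb M$, $\mathcal R_{|S}=i^*\mathcal R$ and $i_*i^*\mathcal R=\mathcal S$ that the paper leaves implicit. No issues.
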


\begin{proof} $\mathbb M^*(S)=\Hom_{\mathcal S}(\mathbb M_{|S},\mathcal S)
\overset{\text{\ref{adj}}}=\Hom_{\mathcal R}(\mathbb M,\mathcal S)$.
\end{proof}

\begin{definition} Let $\mathbb M$ be a functor of $\mathcal R$-modules. We will say that
$\mathbb M^*$ is a dual functor.
We will say that a functor of $\mathcal R$-modules ${\mathbb M}$ is reflexive if ${\mathbb M}={\mathbb M}^{**}$.\end{definition}

\begin{examples}  Quasi-coherent modules and module schemes are reflexive functors of $\mathcal R$-modules.\end{examples}

\begin{proposition} \label{3.2}
Let ${\mathbb M}$ be a functor of $\mathcal R$-modules such that ${\mathbb M}^*$ is a reflexive
functor. The closure of dual functors of $\mathcal R$-modules of ${\mathbb M}$ is
${\mathbb M}^{**}$, that is, it holds the functorial equality $${\rm
Hom}_\mathcal R({\mathbb M},{\mathbb M}')={\rm Hom}_\mathcal R({\mathbb M}^{**},{\mathbb M'})$$ for every dual functor of
$\mathcal R$-modules ${\mathbb M}'$.
\end{proposition}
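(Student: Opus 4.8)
The plan is to exploit the natural biduality morphism $\eta\colon\mathbb M\to\mathbb M^{**}$, which for every dual functor $\mathbb M'=\mathbb N^*$ induces, by precomposition, the restriction map
$$r\colon\Hom_{\mathcal R}(\mathbb M^{**},\mathbb N^*)\to\Hom_{\mathcal R}(\mathbb M,\mathbb N^*),\qquad g\mapsto g\circ\eta,$$
and to prove that $r$ is a bijection. The main tool will be a symmetry of the functor $\Hom$ into a dual: for arbitrary functors of $\mathcal R$-modules $\mathbb A,\mathbb B$ there is a natural bijection, which I will call $(\star)$,
$$\Hom_{\mathcal R}(\mathbb A,\mathbb B^*)=\Hom_{\mathcal R}(\mathbb B,\mathbb A^*).$$

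First I would establish $(\star)$. By the definition of the dual, $\mathbb B^*(S)=\Hom_{\mathcal S}(\mathbb B_{|S},\mathcal S)$, so an element $\omega\in\mathbb B^*(S)$ is exactly a functorial family of $S'$-linear maps $\omega_{S'}\colon\mathbb B(S')\to S'$ indexed by the $S$-algebras $S'$. Consequently a morphism $f\in\Hom_{\mathcal R}(\mathbb A,\mathbb B^*)$ is the same datum as a functorial family of $S$-bilinear pairings $b_S\colon\mathbb A(S)\times\mathbb B(S)\to S$, via $b_S(a,b):=f_S(a)_S(b)$ (the passage back and forth uses only functoriality and Corollary \ref{adj2}). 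This presentation of $\Hom_{\mathcal R}(\mathbb A,\mathbb B^*)$ as the set of bilinear pairings $\mathbb A\times\mathbb B\to\mathcal R$ is manifestly symmetric in $\mathbb A$ and $\mathbb B$, and this symmetry is $(\star)$, natural in both arguments.

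Since $\mathbb M^{**}=(\mathbb M^*)^*$ is itself a dual functor, I can apply $(\star)$ to both sides of the desired equality. With $(\mathbb A,\mathbb B)=(\mathbb M^{**},\mathbb N)$ it gives $\Hom_{\mathcal R}(\mathbb M^{**},\mathbb N^*)=\Hom_{\mathcal R}(\mathbb N,\mathbb M^{***})$, and with $(\mathbb A,\mathbb B)=(\mathbb M,\mathbb N)$ it gives $\Hom_{\mathcal R}(\mathbb M,\mathbb N^*)=\Hom_{\mathcal R}(\mathbb N,\mathbb M^*)$. Now the hypothesis that $\mathbb M^*$ is reflexive means precisely that its biduality map $\eta_{\mathbb M^*}\colon\mathbb M^*\to\mathbb M^{***}$ is an isomorphism; combined with the standard triangular identity $(\eta_{\mathbb M})^*\circ\eta_{\mathbb M^*}=\operatorname{Id}_{\mathbb M^*}$ (checked directly on the evaluation pairing), this forces the dual map $(\eta_{\mathbb M})^*\colon\mathbb M^{***}\to\mathbb M^*$ to be an isomorphism as well, so in particular $\mathbb M^{***}=\mathbb M^*$. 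Chaining the three identities yields
$$\Hom_{\mathcal R}(\mathbb M^{**},\mathbb N^*)=\Hom_{\mathcal R}(\mathbb N,\mathbb M^{***})=\Hom_{\mathcal R}(\mathbb N,\mathbb M^*)=\Hom_{\mathcal R}(\mathbb M,\mathbb N^*).$$

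I expect the one delicate point to be checking that the composite bijection just produced is genuinely the restriction map $r$ induced by $\eta$, rather than some unrelated abstract identification. Unwinding the bilinear description, $r$ takes the pairing attached to $g\colon\mathbb M^{**}\to\mathbb N^*$ to its precomposition with $\eta$ in the first variable; after the symmetry swap of $(\star)$ this corresponds exactly to postcomposition with $(\eta_{\mathbb M})^*\colon\mathbb M^{***}\to\mathbb M^*$ on $\Hom_{\mathcal R}(\mathbb N,-)$. Thus $r$ is identified with the map induced by $(\eta_{\mathbb M})^*$, which is an isomorphism by the previous paragraph; hence $r$ is bijective. Since $(\star)$ and the whole construction are natural in $\mathbb M'=\mathbb N^*$, the resulting equality of $\Hom$-sets is functorial, as claimed. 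The sole obstacle is therefore the bookkeeping of this naturality, and it reduces entirely to the single fact that reflexivity of $\mathbb M^*$ makes $(\eta_{\mathbb M})^*$ invertible.
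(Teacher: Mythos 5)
Your argument is correct and follows essentially the same route as the paper: the identity $(\star)$ is exactly the paper's chain $\Hom_{\mathcal R}(\mathbb M,\mathbb N^*)=\Hom_{\mathcal R}(\mathbb M\otimes\mathbb N,\mathcal R)=\Hom_{\mathcal R}(\mathbb N,\mathbb M^*)$, and the key step in both proofs is applying this symmetry twice together with $\mathbb M^{***}=\mathbb M^*$ from the reflexivity of $\mathbb M^*$. Your version is merely more explicit than the paper's in verifying $(\star)$ and in checking that the resulting bijection is the one induced by the biduality morphism $\eta$.
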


\begin{proof} Write $\mathbb M'=\mathbb {N}^*$. Then,
${\rm Hom}_{\mathcal R}({\mathbb M},{\mathbb M}')\! =\!{\rm Hom}_{\mathcal R}({\mathbb M}\otimes{\mathbb N},\mathcal R)\!=\!{\rm Hom}_{\mathcal R}({\mathbb N},{\mathbb M}^*)={\rm
Hom}_{\mathcal R}({\mathbb N}\otimes{\mathbb M}^{**},\mathcal R) ={\rm Hom}_{\mathcal R}({\mathbb M}^{**},{\mathbb M}').$
\end{proof}

%\begin{corollary} \label{2.14} The closure of dual functors of $\mathcal R$-modules of $\mathcal M_1^* \otimes_{\mathcal R}\cdots\otimes_{\mathcal R} \mathcal M_n^*$ is equal to the module scheme $(\mathcal M_1\otimes_{\mathcal R}\cdots\otimes_{\mathcal R} \mathcal M_n)^*$.\end{corollary}
%
%\begin{proof} It follows from Proposition \ref{3.2}, because
%$(\mathcal M_1^* \otimes_{\mathcal R}\cdots\otimes_{\mathcal R} \mathcal M_n^*)^*\overset{\text{\ref{1.81}}}=\mathcal M_1\otimes_{\mathcal R}\cdots\otimes_{\mathcal R} \mathcal M_n$.
%\end{proof}
%
%

\begin{proposition}\label{2.4}
Let $\mathbb A$ be a functor of $\mathcal R$-algebras such that $\mathbb A^*$ is a
reflexive functor of $\mathcal R$-modules. The closure of dual functors of
$\mathcal R$-algebras of $\mathbb A$ is $\mathbb A^{**}$, that is, it holds the functorial
equality $${\rm Hom}_{\mathcal R-alg}(\mathbb A,\mathbb B)={\rm Hom}_{\mathcal R-alg}(\mathbb A^{**},\mathbb B)$$
for every dual functor of $\mathcal R$-algebras $\mathbb B$.

As a consequence, the category of dual functors of ${\mathbb A}$-modules is equal to the category of dual functors of ${\mathbb A}^{**}$-modules.
\end{proposition}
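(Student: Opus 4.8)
The plan is to reduce everything to the module-level closure already established in Proposition \ref{3.2}, applied to $\mathbb M=\mathbb A$, which is legitimate precisely because $\mathbb A^*$ is assumed reflexive. Write $\tau\colon\mathbb A\to\mathbb A^{**}$ for the canonical morphism of $\mathcal R$-modules. By Proposition \ref{3.2}, composition with $\tau$ gives a bijection $\Hom_{\mathcal R}(\mathbb A^{**},\mathbb M')=\Hom_{\mathcal R}(\mathbb A,\mathbb M')$ for every dual functor $\mathbb M'$; since the target algebra $\mathbb B$ is in particular a dual functor of $\mathcal R$-modules, the asserted equality will follow once I promote this bijection to one between \emph{algebra} morphisms. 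For that I must first equip $\mathbb A^{**}$ with an algebra structure and check that $\tau$ is a morphism of $\mathcal R$-algebras.

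The key device is the tensor--Hom identity $\mathbb Hom_{\mathcal R}(\mathbb M,\mathbb N^*)=(\mathbb M\otimes_{\mathcal R}\mathbb N)^*$ (the functorial form of the adjunction already used in the proof of Proposition \ref{3.2}), which shows that an internal Hom into a dual functor is again a dual functor. Combining it with Proposition \ref{3.2} (always applied to $\mathbb A$) and the tensor--Hom adjunction, one obtains, using $\mathbb Hom_{\mathcal R}(\mathbb A,\mathbb A^{**})=(\mathbb A\otimes_{\mathcal R}\mathbb A^*)^*$ and $\mathbb Hom_{\mathcal R}(\mathbb A^{**},\mathbb A^{**})=(\mathbb A^{**}\otimes_{\mathcal R}\mathbb A^*)^*$, that composition with $\tau^{\otimes n}$ is a bijection $\Hom_{\mathcal R}((\mathbb A^{**})^{\otimes n},\mathbb A^{**})=\Hom_{\mathcal R}(\mathbb A^{\otimes n},\mathbb A^{**})$ for each $n$. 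Taking $n=2$ lets me transport the composite $\mathbb A\otimes_{\mathcal R}\mathbb A\xrightarrow{m}\mathbb A\xrightarrow{\tau}\mathbb A^{**}$ to a multiplication $m^{**}\colon\mathbb A^{**}\otimes_{\mathcal R}\mathbb A^{**}\to\mathbb A^{**}$ with $m^{**}\circ(\tau\otimes\tau)=\tau\circ m$; the unit is $\tau$ composed with the unit of $\mathbb A$. Associativity and the unit axioms follow by comparing the two relevant morphisms out of $(\mathbb A^{**})^{\otimes 3}$ (resp. $\mathbb A^{**}$): after composing with $\tau^{\otimes 3}$ (resp. $\tau$) they reduce to the corresponding axiom for $\mathbb A$, hence agree by injectivity of the $n=3$ (resp. $n=1$) bijection. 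Thus $\mathbb A^{**}$ is a functor of $\mathcal R$-algebras and $\tau$ is a morphism of $\mathcal R$-algebras.

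Now the Hom equality. Composition with $\tau$ sends an algebra morphism $g\colon\mathbb A^{**}\to\mathbb B$ to the algebra morphism $g\circ\tau$, since $\tau$ is an algebra morphism. Conversely, let $f\colon\mathbb A\to\mathbb B$ be a morphism of $\mathcal R$-algebras and let $g\colon\mathbb A^{**}\to\mathbb B$ be the unique $\mathcal R$-linear morphism with $g\circ\tau=f$ supplied by Proposition \ref{3.2}. To see that $g$ respects products I compare $g\circ m^{**}$ and $m_{\mathbb B}\circ(g\otimes g)$ as elements of $\Hom_{\mathcal R}(\mathbb A^{**}\otimes_{\mathcal R}\mathbb A^{**},\mathbb B)$. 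Because $\mathbb B$ is a dual functor, composition with $\tau\otimes\tau$ is injective on this set (the same closure argument, now with $\mathbb B$ in the target), and after composing with $\tau\otimes\tau$ both morphisms become $f\circ m=m_{\mathbb B}\circ(f\otimes f)$, which holds since $f$ is multiplicative. Hence $g\circ m^{**}=m_{\mathbb B}\circ(g\otimes g)$, and compatibility with units is immediate from $g\circ\tau=f$. This proves $\Hom_{\mathcal R-alg}(\mathbb A,\mathbb B)=\Hom_{\mathcal R-alg}(\mathbb A^{**},\mathbb B)$.

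For the consequence, note that for a dual functor $\mathbb M'=\mathbb N^*$ the endomorphism functor $\mathbb Hom_{\mathcal R}(\mathbb M',\mathbb M')=(\mathbb M'\otimes_{\mathcal R}\mathbb N)^*$ is itself a dual functor of $\mathcal R$-algebras under composition. An $\mathbb A$-module structure on $\mathbb M'$ is the same as a morphism of $\mathcal R$-algebras $\mathbb A\to\mathbb Hom_{\mathcal R}(\mathbb M',\mathbb M')$, and likewise for $\mathbb A^{**}$; by the equality just proved these correspond bijectively via $\tau$, so the two classes of objects coincide. Finally, a morphism $\phi$ of the underlying dual functors is $\mathbb A^{**}$-linear if and only if it is $\mathbb A$-linear: both linearity identities live in $\Hom_{\mathcal R}(\mathbb A^{**}\otimes_{\mathcal R}\mathbb M',\mathbb M'')$, composition with $\tau\otimes\mathrm{id}$ is injective there (as $\mathbb M''$ is a dual functor), and the two identities become equal after this composition. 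Therefore the category of dual functors of $\mathbb A$-modules equals the category of dual functors of $\mathbb A^{**}$-modules. The main obstacle is the second paragraph: transporting the multiplication to $\mathbb A^{**}$ and verifying the algebra axioms together with multiplicativity of $\tau$. Once the $(-)^{**}$ construction is seen to be compatible with the tensor product through these injectivity arguments, the remaining statements are formal.
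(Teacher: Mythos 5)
Your proposal is correct and follows essentially the same route as the paper: both use Proposition \ref{3.2} together with the tensor--Hom adjunction to identify $\Hom_{\mathcal R}(\mathbb A^{\otimes n},\mathbb M^*)$ with $\Hom_{\mathcal R}((\mathbb A^{**})^{\otimes n},\mathbb M^*)$, transport the multiplication to $\mathbb A^{**}$, verify the algebra axioms and the multiplicativity conditions by injectivity after composing with the canonical morphism, and deduce the module-category statement from the fact that $\mathbb End_{\mathcal R}\mathbb M^*=(\mathbb M^*\otimes\mathbb M)^*$ is a dual functor of algebras.
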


\begin{proof}
 Given a dual functor of ${\mathcal R}$-modules $\mathbb M^*$, by induction on $n$
 $$\aligned {\mathbb Hom}_{\mathcal R} ({\mathbb A} \otimes \overset n\ldots \otimes {\mathbb A}, \mathbb M^*) &
={\mathbb Hom}_{\mathcal R} ({\mathbb A} \otimes \overset{n-1}\ldots \otimes {\mathbb A}, \mathbb Hom_{\mathcal R}(\mathbb A,\mathbb M^*))
\\ & \overset{\text{\ref{3.2}}} =
{\mathbb Hom}_{\mathcal R} ({\mathbb A} \otimes \overset{n-1}\ldots \otimes {\mathbb A}, \mathbb Hom_{\mathcal R}(\mathbb A^{**},\mathbb M^*))\\
& \overset{\text{Ind.Hyp.}}= {\mathbb Hom}_{\mathcal R} ({\mathbb A}^{**} \otimes \overset{n-1}\ldots \otimes {\mathbb A}^{**}, \mathbb Hom_{\mathcal R}(\mathbb A^{**},\mathbb M^*))
\\ & = {\mathbb
Hom}_{\mathcal R} ({\mathbb A}^{**} \otimes \ldots \otimes {\mathbb A}^{**}, \mathbb M^*) .\endaligned$$

Let $i\colon \mathbb A\to \mathbb A^{**}$ be the natural morphism.
The multiplication morphism $m\colon  \mathbb A\otimes  \mathbb A\to  \mathbb A$
defines a unique morphism
$m'\colon  \mathbb A^{**}\otimes  \mathbb A^{**}\to  \mathbb A^{**}$  such that the diagram
$$\xymatrix{  \mathbb A \otimes  \mathbb A \ar[d]^-m \ar[r]^-{i\otimes i} &  \mathbb A^{**}\otimes  \mathbb A^{**}\ar[d]^-{m'}\\  \mathbb A \ar[r]^-i &  \mathbb A^{**}}$$
is commutative, because $\Hom_{\mathcal R}(\mathbb A\otimes \mathbb A,\mathbb A^{**})
=\Hom_{\mathcal R}(\mathbb A^{**}\otimes \mathbb A^{**},\mathbb A^{**})$.
It follows easily that the algebra structure of
${\mathbb A}$ defines an algebra structure on ${\mathbb A}^{**}$.
Let us only check that $m'$ satisfies the associative property: The morphisms
$m'\circ (m'\otimes {\rm Id})$, $m'\circ ({\rm Id}\otimes m')\colon
\mathbb A^{**}\otimes\mathbb A^{**}\otimes \mathbb A^{**}\to \mathbb A^{**}$ are
equal because
$$\aligned & (m'\circ (m'\otimes {\rm Id}))\circ (i\otimes i\otimes i) =
m'\circ (i\otimes i)\circ (m\otimes {\rm Id}) =
i\circ m\circ (m\otimes {\rm Id})\\ & =i\circ m\circ ({\rm Id}\otimes m)=
m'\circ (i\otimes i)\circ ({\rm Id}\otimes m)=
(m'\circ ({\rm Id}\otimes m'))\circ (i\otimes i\otimes i)\endaligned$$

 The kernel of the morphism
$$\Hom_{\mathcal R}(\mathbb A,\mathbb B)\to \Hom_{\mathcal R}(\mathbb A\otimes_{\mathcal R}\mathbb A,\mathbb B),\,\,f\mapsto f\circ m-m\circ (f\otimes f),$$ coincides with the
kernel of the morphism
$$\Hom_{\mathcal R}(\mathbb A^{**},\mathbb B)\to \Hom_{\mathcal R}(\mathbb A^{**}\otimes_{\mathcal R}\mathbb A^{**},\mathbb B),\,\,f\mapsto f\circ m'-m\circ (f\otimes f).$$
Then, ${\rm Hom}_{{\mathcal R}-alg}({\mathbb A},{\mathbb B})={\rm
Hom}_{{\mathcal R}-alg}({\mathbb A}^{**},{\mathbb B})$.

Finally, given a dual functor of $\mathcal R$-modules $\mathbb M^*$, then $\mathbb End_{\mathcal R}\mathbb M^*=(\mathbb M^*\otimes\mathbb M)^*$ is a dual functor of $\mathcal R$-algebras and
$${\rm Hom}_{{\mathcal R}-alg}({\mathbb A},\mathbb End_{\mathcal R}\mathbb M^*)={\rm
Hom}_{{\mathcal R}-alg}({\mathbb A}^{**},\mathbb End_{\mathcal R}\mathbb M^*)$$
Given two $\mathbb
A^{**}$-modules, $\mathbb N$ and $\mathbb M^*$, and a morphism $f\colon \mathbb N\to \mathbb M^*$ of $\mathbb A$-modules, then $f$ is a morphism of ${\mathbb A}^{**}$-modules
because given $n\in\mathbb N$, the morphism $\mathbb A^{**}\to \mathbb M^*$, $a\mapsto
f(an)-af(n)$ is zero because $\mathbb A\to \mathbb M^*$, $a\mapsto
f(an)-af(n)$ is zero. Then, the category of dual functors of ${\mathbb A}$-modules is equal to the category of  dual functors of ${\mathbb A}^{**}$-modules.

\end{proof}

\begin{example} Let $G=\Spec A$ be an $R$-group scheme and let $\mathcal R[G^\cdot]$ be the functor defined
by $\mathcal R[G^\cdot](S)=\{\text{formal sums}\, s_1g_1+\cdots+s_ng_n,$ $n\in{\mathbb N},$ $s_i\in S$ and $g_i\in G^\cdot(S)\}$. It is easy to prove that
$\mathcal R[G^\cdot]^* =\mathcal A$ and $\mathcal R[G^\cdot]^{**}=\mathcal A^*$ (see \cite{Amel2}). Then, the category of (rational) $G$-modules is equivalent to the category of quasi-coherent $\mathcal R[G^\cdot]$-modules, which is equivalent to the category of quasi-coherent $\mathcal A^*$-modules.

\end{example}

\section{$D$-proquasi-coherent modules}

\begin{notation} Let us denote $\mathbb M(\mathcal R)$ the quasi-coherent module associated with
the $R$-module $\mathbb M(R)$, that is, $\mathbb M(\mathcal R)(S):=\mathbb M(R)\otimes_RS$.\end{notation}

There exists a natural morphism $\mathbb M(\mathcal R)\to \mathbb M$, $m\otimes s\mapsto s\cdot m$. Observe that
$$\Hom_{\mathcal R}(\mathcal N,\mathbb M)=\Hom_R(N,\mathbb M(R))=\Hom_{\mathcal R}(\mathcal N,\mathbb M(\mathcal R))$$
for all quasi-coherent modules $\mathcal N$.

\begin{definition} We will say that \label{dualq} a functor of $\mathcal R$-modules $\mathbb M$ is a D-proquasi-coherent module if  the natural morphism $\mathbb M^*\to \mathbb M(\mathcal R)^*$ is injective.\end{definition}

\begin{example} Quasi-coherent modules are D-proquasi-coherent modules, because $\mathcal M(\mathcal R)=\mathcal M$.
\end{example}

\begin{example} \label{3.3.3} If $M=\oplus_I R$ is a free $R$-module, then $\mathcal M^*$ is D-proquasi-coherent: The obvious morphism $\oplus_I \mathcal R\to \prod_I\mathcal R=\mathcal M^*$, factors via $\mathcal M^*(\mathcal R)$. Dually, the composite morphism $\mathcal M=\mathcal M^{**}
\to \mathcal M^*(\mathcal R)^*\to \prod_I \mathcal R$ is injective, then $\mathcal M^{**}
\to \mathcal M^*(\mathcal R)^*$ is injective.\end{example}

\begin{note}  \label{3.3.4} The direct limit of a direct system of functors of D-proquasi-coherent modules is D-proquasi-coherent. Every quotient of a D-proquasi-coherent module is D-proquasi-coherent.\end{note}

\begin{theorem}  \label{W1} A functor of $\mathcal R$-modules $\mathbb M$ D-proquasi-coherent
 if and only if for every $R$-module $N$ the map
$$\Hom_{\mathcal R}(\mathbb M,\mathcal N)\to \Hom_{R}(\mathbb M(R),N),\quad f\mapsto f_R$$
is  injective.\end{theorem}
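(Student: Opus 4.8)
The plan is to unwind Definition \ref{dualq} into the concrete language of the maps $f\mapsto f_R$, and then to bridge the gap between the quasi-coherent module $\mathcal S$ attached to an $R$-\emph{algebra} $S$ (which is what the definition controls) and the quasi-coherent module $\mathcal N$ attached to an arbitrary $R$-\emph{module} $N$ (which is what the statement asks about).

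First I would identify the morphism $\mathbb M^*\to\mathbb M(\mathcal R)^*$ on $S$-points. By Corollary \ref{adj2}, $\mathbb M^*(S)=\Hom_{\mathcal R}(\mathbb M,\mathcal S)$, and by Proposition \ref{tercer} applied to the quasi-coherent module $\mathbb M(\mathcal R)$ (associated with $\mathbb M(R)$) we get $\mathbb M(\mathcal R)^*(S)=\Hom_{\mathcal R}(\mathbb M(\mathcal R),\mathcal S)=\Hom_R(\mathbb M(R),S)$. Since the natural morphism $\mathbb M(\mathcal R)\to\mathbb M$ is the identity on $R$-points, the induced map $\mathbb M^*(S)\to\mathbb M(\mathcal R)^*(S)$ sends $f$ to the $R$-component of $\mathbb M(\mathcal R)\to\mathbb M\overset{f}\to\mathcal S$, which is exactly $f_R$. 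Because $(\Ker f)(S)=\Ker f_S$, a morphism of functors is injective iff it is injective on $S$-points for every $S$; hence $\mathbb M$ is D-proquasi-coherent iff $\Hom_{\mathcal R}(\mathbb M,\mathcal S)\to\Hom_R(\mathbb M(R),S)$, $f\mapsto f_R$, is injective for every $R$-algebra $S$. The ``if'' direction is then immediate: for any $R$-algebra $S$, take $N$ to be its underlying $R$-module, so $\mathcal N=\mathcal S$ and the hypothesis for this $N$ is precisely the required injectivity.

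For the ``only if'' direction I must pass from algebras to arbitrary modules. The key observation is that every $R$-module $N$ is a direct summand, as an $R$-module, of a suitable commutative $R$-algebra: for instance the symmetric algebra $S:=S^\cdot_R N$ contains $N$ as its degree-one component, with the degree-one projection $S\twoheadrightarrow N$ an $R$-linear retraction (the square-zero extension $R\oplus N$ works equally well). These $R$-linear maps $N\hookrightarrow S$ and $S\twoheadrightarrow N$ induce morphisms of quasi-coherent modules $j\colon\mathcal N\to\mathcal S$ and $p\colon\mathcal S\to\mathcal N$ with $p\circ j=\mathrm{id}_{\mathcal N}$. Now suppose $f\in\Hom_{\mathcal R}(\mathbb M,\mathcal N)$ satisfies $f_R=0$. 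Setting $g:=j\circ f\in\Hom_{\mathcal R}(\mathbb M,\mathcal S)$ gives $g_R=j_R\circ f_R=0$, so D-proquasi-coherence forces $g=0$, whence $f=p\circ g=0$; this proves injectivity for $\mathcal N$.

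The only real obstacle is this last reduction: the definition supplies injectivity only against algebra-valued targets $\mathcal S$, whereas the statement demands it against all $\mathcal N$. Everything hinges on realizing $N$ as a \emph{retract} of an algebra, so that a hypothetical nonzero $f\colon\mathbb M\to\mathcal N$ with $f_R=0$ would produce a nonzero $g\colon\mathbb M\to\mathcal S$ with $g_R=0$. I would emphasize that one cannot merely embed $N$ into an algebra and restrict scalars, since $-\otimes_R T$ need not preserve injectivity; using a \emph{split} injection is exactly what circumvents this difficulty.
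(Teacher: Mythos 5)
Your proposal is correct and follows essentially the same route as the paper: the paper also reduces the statement to injectivity of $\Hom_{\mathcal R}(\mathbb M,\mathcal S)\to\Hom_R(\mathbb M(R),S)$ for algebras $S$ and then handles an arbitrary module $N$ by realizing it as a direct summand of the square-zero extension $S=R\oplus N$ (the alternative you mention), which is exactly your retract argument. The only difference is your default choice of the symmetric algebra instead of $R\oplus N$, which is immaterial.
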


\begin{proof} If the natural morphism $\mathbb M^*\to \mathbb M(\mathcal R)^*$ is injective, then
$\Hom_{\mathcal R}(\mathbb M,\mathcal S)\subseteq \Hom_{R}(\mathbb M(R),S)$ for all commutative $R$-algebras $S$. Given an $R$-module $N$, consider the $R$-algebra $S:=R\oplus N$, with the multiplication operation $(r,n)\cdot (r',n'):=(rr',rn'+r'n)$. Then,
$$\Hom_{\mathcal R}(\mathbb M, \mathcal R\oplus\mathcal N)=\Hom_{\mathcal R}(\mathbb M,\mathcal S)\subseteq \Hom_{R}(\mathbb M(R),S)= \Hom_{R}(\mathbb M(R), R\oplus N)$$
Hence, $\Hom_{\mathcal R}(\mathbb M,\mathcal N)\subseteq \Hom_{R}(\mathbb M(R),  N)$

Reciprocally,
$\mathbb M^*(S)=\Hom_{\mathcal R}(\mathbb M,\mathcal S)\hookrightarrow
\Hom_{R}(\mathbb M(R),S)=\mathbb M(\mathcal R)^*(S)$ is injective
for all commutative $R$-algebras $S$, hence the morphism $\mathbb M^*\hookrightarrow \mathbb M(\mathcal R)^*$ is injective.
\end{proof}

\begin{corollary} Let $R=K$ be a field. A functor of $\mathcal K$-modules $\mathbb M$ is D-proquasi-coherent
if and only if the natural morphism $\mathbb M^*(K)\to \mathbb M(K)^*:=\Hom_K(\mathbb M(K),K)$, $w\mapsto w_K$ is injective.\end{corollary}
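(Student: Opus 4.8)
The plan is to deduce this corollary from Theorem \ref{W1}, exploiting that over a field every module is free. The ``only if'' direction is immediate: if $\mathbb M$ is D-proquasi-coherent, the morphism of functors $\mathbb M^*\to \mathbb M(\mathcal K)^*$ is injective, hence so is its component at $S=K$. That component is exactly the map $\mathbb M^*(K)\to \mathbb M(K)^*$, $w\mapsto w_K$, since $\mathbb M^*(K)=\Hom_{\mathcal K}(\mathbb M,\mathcal K)$ by \ref{adj2} and $\mathbb M(\mathcal K)^*(K)=\Hom_K(\mathbb M(K),K)=\mathbb M(K)^*$.

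For the ``if'' direction I would invoke Theorem \ref{W1}: it suffices to prove that for every $K$-vector space $N$ the map $\Hom_{\mathcal K}(\mathbb M,\mathcal N)\to \Hom_K(\mathbb M(K),N)$, $f\mapsto f_K$, is injective. Because $K$ is a field, write $N=\oplus_I K$; then $\mathcal N=\oplus_I\mathcal K$, as one checks from $\mathcal N(S)=N\otimes_K S=\oplus_I S=(\oplus_I\mathcal K)(S)$. Let $\pi_i\colon \oplus_I\mathcal K\to\mathcal K$ denote the $i$-th projection, which is a morphism of functors of $\mathcal K$-modules.

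The key step is to show that $f\colon \mathbb M\to\mathcal N$ with $f_K=0$ must vanish. For each $i\in I$, the composite $\pi_i\circ f$ lies in $\Hom_{\mathcal K}(\mathbb M,\mathcal K)=\mathbb M^*(K)$, and by functoriality $(\pi_i\circ f)_K=(\pi_i)_K\circ f_K=0$. By the hypothesis that $w\mapsto w_K$ is injective on $\mathbb M^*(K)$, this forces $\pi_i\circ f=0$ for every $i$. Now for any $K$-algebra $S$ and any $m\in\mathbb M(S)$, the $i$-th coordinate of $f_S(m)\in\oplus_I S$ equals $(\pi_i\circ f)_S(m)=0$; since this holds for all $i$ and an element of $\oplus_I S$ is determined by its coordinates, $f_S(m)=0$. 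Hence $f=0$, which gives injectivity for this $N$, and Theorem \ref{W1} then yields that $\mathbb M$ is D-proquasi-coherent.

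The only real subtlety, and where I would be careful, is the passage through the (possibly infinite) direct sum: one must verify that each projection $\pi_i$ is genuinely an $\mathcal K$-linear natural transformation (so that $\pi_i\circ f\in\mathbb M^*(K)$ and the hypothesis applies) and that the vanishing of all coordinates forces a morphism into $\oplus_I\mathcal K$ to be zero. Both are clear from the explicit description $(\oplus_I\mathcal K)(S)=\oplus_I S$ with the obvious $S$-linear, natural projections, so no finiteness hypothesis is needed and the argument is unaffected by $N$ being infinite-dimensional.
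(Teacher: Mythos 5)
Your proposal is correct and follows essentially the same route as the paper: both directions reduce to Theorem \ref{W1} by writing $N=\oplus_I K$ and checking injectivity componentwise, which is precisely the content of the paper's commutative diagram through $\Hom_{\mathcal K}(\mathbb M,\prod_I\mathcal K)=\prod_I\Hom_{\mathcal K}(\mathbb M,\mathcal K)$. Your element-wise phrasing via the projections $\pi_i$ is just an unwinding of that diagram, and your care about the infinite direct sum is exactly the point the diagram encodes.
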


\begin{proof} We  only have  to prove the sufficiency. Let $N=\oplus_I K$ be a $K$-vector space. The diagram
$$\xymatrix @C6pt{\Hom_{\mathcal K}(\mathbb M,\mathcal N) \ar@{^{(}->}[d] \ar[rrr] & & &
\Hom_K(\mathbb M(K),N) \ar@{^{(}->}[d]  \\  \Hom_{\mathcal K}(\mathbb M,\underset{\phantom{I}}{\overset I\prod}\mathcal K) \ar@{=}[r] & \underset{\phantom{I}}{\overset I\prod}\Hom_{\mathcal K}(\mathbb M,\mathcal K) \ar@{^{(}->}[r]  &
\underset{\phantom{I}}{\overset I\prod}\Hom_{K}(\mathbb M(K),K)\ar@{=}[r] & \Hom_{K}(\mathbb M(K),\underset{\phantom{I}}{\overset I\prod} K)}$$
is commutative. Then, the morphism $\Hom_{\mathcal K}(\mathbb M,\mathcal N)\to \Hom_K(\mathbb M(K),N)$ is injective and $\mathbb M$ is D-proquasi-coherent.
\end{proof}

\begin{proposition} \label{QU} If $\mathbb M$ is a D-proquasi-coherent $\mathcal R$-module and $S$ is a commutative $R$-algebra, then the functor of $\mathcal S$-modules $\mathbb M_{|S}$ is a D-proquasi-coherent $\mathcal S$-module.\end{proposition}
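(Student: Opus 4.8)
The plan is to reduce everything to the characterization of D-proquasi-coherence given in Theorem \ref{W1}, now applied over the base ring $S$. By that theorem, $\mathbb M_{|S}$ is a D-proquasi-coherent $\mathcal S$-module if and only if, for every $S$-module $N'$, the map
$$\Hom_{\mathcal S}(\mathbb M_{|S},\mathcal N')\to \Hom_S(\mathbb M(S),N'),\quad g\mapsto g_S$$
is injective (recall $\mathbb M_{|S}(S)=\mathbb M(S)$, and here $\mathcal N'$ denotes the quasi-coherent $\mathcal S$-module associated with $N'$). So I would fix an $S$-module $N'$ and prove this injectivity.

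First I would rewrite the source of this map using the Formula of adjoint functors \ref{adj} for $i\colon R\to S$, noting that $i^*\mathbb M=\mathbb M_{|S}$:
$$\Hom_{\mathcal S}(\mathbb M_{|S},\mathcal N')=\Hom_{\mathcal S}(i^*\mathbb M,\mathcal N')=\Hom_{\mathcal R}(\mathbb M,i_*\mathcal N').$$
The key computation is the identification of $i_*\mathcal N'$: for every commutative $R$-algebra $R'$ one has $(i_*\mathcal N')(R')=\mathcal N'(S\otimes_R R')=N'\otimes_S(S\otimes_R R')=N'\otimes_R R'$, so $i_*\mathcal N'$ is nothing but the quasi-coherent $\mathcal R$-module associated with $N'$ viewed, by restriction of scalars, as an $R$-module. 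This is the step I expect to carry the real content of the statement.

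Since $\mathbb M$ is D-proquasi-coherent, Theorem \ref{W1} applied to $\mathbb M$ and to the $R$-module $N'$ gives that $\Hom_{\mathcal R}(\mathbb M,i_*\mathcal N')\to \Hom_R(\mathbb M(R),N')$, $f\mapsto f_R$, is injective. To conclude I would track the adjunction isomorphism of \ref{adj}: if $g$ corresponds to $f$, then by construction $f_{R'}$ is the composite of $\mathbb M(R')\to\mathbb M(S\otimes_R R')$ with $g_{S\otimes_R R'}$; in particular, taking $R'=R$ and using $S\otimes_R R=S$, one gets $f_R=g_S\circ u$, where $u\colon \mathbb M(R)\to\mathbb M(S)$ is the natural morphism. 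Hence $g_S=0$ forces $f_R=0$, whence $f=0$ and therefore $g=0$. This proves the required injectivity of $g\mapsto g_S$, and so $\mathbb M_{|S}$ is D-proquasi-coherent.

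The only delicate points are the two identifications above: that $i_*\mathcal N'$ is genuinely the quasi-coherent module of the underlying $R$-module of $N'$, and that the $R$-component $f_R$ of the adjoint of $g$ factors through $g_S$. Both follow by unwinding the explicit formulas in the proof of \ref{adj}, so no essentially new computation is needed beyond these bookkeeping checks.
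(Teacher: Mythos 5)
Your proof is correct and follows essentially the same route as the paper: the paper's argument is precisely the commutative square obtained from the adjunction $\Hom_{\mathcal S}(\mathbb M_{|S},\mathcal N')=\Hom_{\mathcal R}(\mathbb M,i_*\mathcal N')$ together with the injectivity from Theorem \ref{W1} over $R$, with the identification of $i_*\mathcal N'$ and the relation $f_R=g_S\circ u$ left implicit. You have merely spelled out those bookkeeping checks explicitly.
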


\begin{proof} Let $S$ be a commutative $R$-algebra and let $N$ be an $S$-module. The diagram
$$\xymatrix{\Hom_{\mathcal S}(\mathbb M_{|S},\mathcal N) \ar@{=}[r]^-{\text{\ref{adj}}} \ar[d] &
\Hom_{\mathcal R}(\mathbb M,\mathcal N)\ar@{^{(}->}[d]^-{\text{\ref{W1}}}
\\ \Hom_{\mathcal S}(\mathbb M(S),N) \ar[r] & \Hom_{R}(\mathbb M(R),N)}$$
is commutative, then the morphism $\Hom_{\mathcal S}(\mathbb M_{|S},\mathcal N)\to \Hom_{\mathcal S}(\mathbb M(S),N)$ is injective and $\mathbb M_{|S}$ is D-proquasi-coherent.\end{proof}

\begin{lemma} \label{lemaD} A functor of  $\mathcal R$-modules $\mathbb M$ is D-proquasi-coherent  if
and only if the cokernel of every $\mathcal R$-module morphism from $\mathbb M$ to a quasi-coherent module is quasi-coherent, that is, the cokernel of any morphism
$f\colon \mathbb M\to \mathcal N$ is the quasi-coherent module associated with $\Coker f_R$.\end{lemma}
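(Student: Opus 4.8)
The plan is to reduce everything to the characterization of D-proquasi-coherence supplied by Theorem \ref{W1}: $\mathbb M$ is D-proquasi-coherent if and only if, for every $R$-module $N$, the map $\Hom_{\mathcal R}(\mathbb M,\mathcal N)\to \Hom_R(\mathbb M(R),N)$, $f\mapsto f_R$, is injective. The guiding observation is that cokernels are computed objectwise, so for $f\colon \mathbb M\to\mathcal N$ one has $(\Coker f)(S)=(N\otimes_R S)/\Ima f_S$; writing $I:=\Ima f_R\subseteq N$, $C:=N/I=\Coker f_R$ and letting $\mathcal C$ be the quasi-coherent module attached to $C$, the claim ``$\Coker f=\mathcal C$'' amounts to the objectwise equality $\Ima f_S=\langle I\rangle_S$, where $\langle I\rangle_S:=\Ima(I\otimes_R S\to N\otimes_R S)=\Ker(\mathcal N(S)\to\mathcal C(S))$.

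For the direct implication I would assume $\mathbb M$ D-proquasi-coherent and establish the two inclusions between $\Ima f_S$ and $\langle I\rangle_S$. The inclusion $\langle I\rangle_S\subseteq\Ima f_S$ holds for arbitrary $\mathbb M$: the group $\langle I\rangle_S$ is generated by the elements $f_R(m)\otimes s$, and by naturality together with $S$-linearity of $f_S$ one has $f_R(m)\otimes s=f_S(s\cdot m)\in\Ima f_S$, where $m$ also denotes its image in $\mathbb M(S)$. The reverse inclusion $\Ima f_S\subseteq\langle I\rangle_S$ is where the hypothesis is used: the composite $\mathbb M\xrightarrow{f}\mathcal N\to\mathcal C$ is a morphism into a quasi-coherent module, so by Theorem \ref{W1} it is determined by its $R$-component $\mathbb M(R)\xrightarrow{f_R}N\to N/I$, which vanishes because $I=\Ima f_R$; hence the whole composite is zero, i.e. $\Ima f_S\subseteq\Ker(\mathcal N(S)\to\mathcal C(S))=\langle I\rangle_S$.

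For the converse I would assume every such cokernel is quasi-coherent (associated with $\Coker f_R$) and verify the injectivity demanded by Theorem \ref{W1}. Given $f,g\colon\mathbb M\to\mathcal N$ with $f_R=g_R$, set $h:=f-g$, so $h_R=0$; it suffices to prove $h=0$. Then $\Coker h_R=N/\Ima h_R=N$, so by hypothesis $\Coker h=\mathcal N$ and the canonical projection $\pi\colon\mathcal N\to\Coker h=\mathcal N$ is a morphism of quasi-coherent modules whose $R$-component is the quotient by $\Ima h_R=0$, i.e. the identity. Since $\Hom_{\mathcal R}(\mathcal N,\mathcal N)=\Hom_R(N,N)$, we get $\pi=\mathrm{Id}$, and from $\pi\circ h=0$ (definition of cokernel) we conclude $h=0$.

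The main obstacle is the reverse inclusion $\Ima f_S\subseteq\langle I\rangle_S$ in the direct implication, since this is the single step that genuinely invokes the hypothesis; the decisive idea is to recast it as the vanishing of a morphism $\mathbb M\to\mathcal C$ with quasi-coherent target, so that Theorem \ref{W1} reduces it to the trivially-zero $R$-component. Everything else, namely the opposite inclusion and the identification of $\pi$ with the identity in the converse, is a formal consequence of naturality, $S$-linearity, and the equivalence between quasi-coherent $\mathcal R$-modules and $R$-modules recorded after Proposition \ref{tercer}.
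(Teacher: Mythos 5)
Your proof is correct and follows essentially the same route as the paper's: both directions reduce to the injectivity criterion of Theorem \ref{W1}, applying it in the forward direction to the composite $\mathbb M\to\mathcal N\to\mathcal C$ with vanishing $R$-component. You merely spell out two steps the paper leaves implicit, namely the objectwise identification of $\Ima f_S$ with $\langle \Ima f_R\rangle_S$ and, in the converse, why $\Coker h=\mathcal N$ forces $h=0$.
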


\begin{proof} $\Rightarrow)$ Let $f\colon \mathbb M\to \mathcal N$ be a morphism of $\mathcal R$-modules. Let $N'=\Coker f_R$ and let $\pi\colon \mathcal N\to \mathcal N'$ be the natural epimorphism.  As $(\pi\circ f)_R=0$,   $\pi\circ f=0$ by Theorem \ref{W1}. Then, $\Coker f=\mathcal N'$.

$\Leftarrow)$ Let $f\colon \mathbb M\to \mathcal N$ be a morphism of $\mathcal R$-modules. If $f_R=0$ then $\Coker f=\mathcal N$ and $f=0$. Therefore,
$\mathbb M$ is D-proquasi-coherent, by Theorem \ref{W1}.

\end{proof}

\begin{note} \label{3.14} If $R=K$ is a field, the kernel of every morphism between quasi-coherent modules is quasi-coherent. Then, $\mathbb M$ is D-proquasi-coherent if and only if the image of every morphism $f\colon \mathbb M\to \mathcal N$ for every quasi-coherent module $\mathcal N$, which is the kernel of the morphism $\mathcal N\to \Coker f$,
is a quasi-coherent module.\end{note}

\begin{theorem} \label{P2} Let $R=K$ be a field and  let $\mathbb M$ be a D-proquasi-coherent functor of $\mathcal K$-modules. Let $\{\mathcal M_i\}_{i\in I}$ be the set of  all
quasi-coherent quotients of $\mathbb M$. Then,

$$\mathbb M^*=\ilim{i\in I}  \mathcal M_i^*$$

\end{theorem}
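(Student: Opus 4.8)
The plan is to realize both sides as a directed union of submodules of $\mathbb M^*$ and to compare them on $S$-points. Each quasi-coherent quotient $\pi_i\colon \mathbb M\twoheadrightarrow \mathcal M_i$ dualizes to a morphism $\pi_i^*\colon \mathcal M_i^*\to \mathbb M^*$ which is a monomorphism: on $S$-points, by Corollary \ref{adj2}, it is the precomposition map $\Hom_{\mathcal K}(\mathcal M_i,\mathcal S)\to \Hom_{\mathcal K}(\mathbb M,\mathcal S)$, $v\mapsto v\circ\pi_i$, which is injective because $\pi_i$ is an epimorphism. I would order the set $I$ of quasi-coherent quotients by domination (setting $i\leq j$ when there is $p\colon \mathcal M_j\to \mathcal M_i$ with $p\circ \pi_j=\pi_i$); then $p^*\colon \mathcal M_i^*\to \mathcal M_j^*$ is again a monomorphism and satisfies $\pi_j^*\circ p^*=\pi_i^*$, so the $\mathcal M_i^*$ form a direct system of submodules of $\mathbb M^*$ and the $\pi_i^*$ assemble into a comparison morphism $\ilim{i\in I}\mathcal M_i^*\to \mathbb M^*$.

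Next I would verify that $I$ is directed, which turns the direct limit into the union $\bigcup_i \mathcal M_i^*\subseteq \mathbb M^*$. Given quotients $\mathcal M_i,\mathcal M_j$, their product is the quasi-coherent module associated with $M_i\oplus M_j$, and the image of $(\pi_i,\pi_j)\colon \mathbb M\to \mathcal M_i\times \mathcal M_j$ is quasi-coherent by Note \ref{3.14}; this image is a quasi-coherent quotient of $\mathbb M$ dominating both $\mathcal M_i$ and $\mathcal M_j$ through the two projections. This is the step where D-proquasi-coherence over the field $K$ is used.

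The core of the argument is surjectivity of the comparison morphism, which I would check on $S$-points. By Corollary \ref{adj2}, an element of $\mathbb M^*(S)$ is a morphism $w\colon \mathbb M\to \mathcal S$. Since $\mathcal S$ is quasi-coherent and $\mathbb M$ is D-proquasi-coherent, Note \ref{3.14} tells us that $\Ima w$ is quasi-coherent; being a quotient of $\mathbb M$, it is one of the $\mathcal M_i$. The tautological factorization $\mathbb M\twoheadrightarrow \Ima w\hookrightarrow \mathcal S$ then writes $w=\pi_i^*(\bar w)$ for the induced $\bar w\in\mathcal M_i^*(S)$, so $w$ lies in the image. Injectivity is formal: a class in the limit is represented by some $w_i\in\mathcal M_i^*(S)$, and if $\pi_i^*(w_i)=w_i\circ\pi_i=0$ then $w_i=0$ because $\pi_i$ is an epimorphism.

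I expect the single delicate point to be the repeated appeal to Note \ref{3.14}: the whole proof hinges on the fact that over a field the image of a morphism from a D-proquasi-coherent module into a quasi-coherent module is again quasi-coherent. This is what makes $\Ima w$ a legitimate index $\mathcal M_i$ and supplies the factorization of $w$; both the directedness of $I$ and the injectivity are then purely formal consequences.
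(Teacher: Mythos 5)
Your proposal is correct and follows essentially the same route as the paper's proof: identify $\mathbb M^*(S)$ with $\Hom_{\mathcal K}(\mathbb M,\mathcal S)$ via Corollary \ref{adj2}, get surjectivity of the comparison map from Note \ref{3.14} (the image of $w\colon\mathbb M\to\mathcal S$ is a quasi-coherent quotient through which $w$ factors), and observe that injectivity is formal. The only difference is that you also verify directedness of the index set, a point the paper's one-line proof leaves implicit.
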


\begin{proof} Let $S$ be a commutative $K$-algebra.
$\mathbb M^*(S)=\Hom_{\mathcal K}(\mathbb M,\mathcal S)$, by Corollary \ref{adj2}.
The morphism $\ilim{i\in I}  \mathcal M_i^*(S)\to \Hom_{\mathcal K}(\mathbb M,\mathcal S)=\mathbb M^*(S)$ is obviously injective, and it is surjective by  Note \ref{3.14}. Hence, $\mathbb M^*=\ilim{i\in I}  \mathcal M_i^*$.

\end{proof}

\begin{corollary} \label{QHom2}Let  $R=K$ be a field.
 If $\mathbb M$ is D-proquasi-coherent, then $\mathbb M^*$ is D-proquasi-coherent.\end{corollary}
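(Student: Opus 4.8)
The plan is to express $\mathbb M^*$ as a direct limit of duals of free modules and then invoke the stability of D-proquasi-coherence under direct limits. Since $R=K$ is a field, Theorem \ref{P2} applies directly to the D-proquasi-coherent functor $\mathbb M$: writing $\{\mathcal M_i\}_{i\in I}$ for the set of all quasi-coherent quotients of $\mathbb M$, it gives the structural identity $\mathbb M^*=\ilim{i\in I}\mathcal M_i^*$. This is the equality I would take as the starting point of the argument.

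Next I would observe that each $\mathcal M_i$ is a quasi-coherent $\mathcal K$-module, hence equals the quasi-coherent module associated with the $K$-vector space $M_i:=\mathcal M_i(K)$. Because $K$ is a field, every such $M_i$ admits a basis, so $M_i\cong\oplus_{J_i}K$ is free for a suitable index set $J_i$. Example \ref{3.3.3}, which asserts that $\mathcal M^*$ is D-proquasi-coherent whenever $M$ is free, then applies to each $M_i$ and shows that every dual $\mathcal M_i^*$ is a D-proquasi-coherent $\mathcal K$-module.

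Finally, the identity $\mathbb M^*=\ilim{i\in I}\mathcal M_i^*$ exhibits $\mathbb M^*$ as the direct limit of a direct system of D-proquasi-coherent modules, so Note \ref{3.3.4} yields at once that $\mathbb M^*$ is itself D-proquasi-coherent. This closes the argument.

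The proof is essentially a short concatenation of three earlier results, so I do not expect a genuine obstacle; the only point demanding attention is the role of the field hypothesis. It is precisely the fact that over a field every vector space is free that allows Example \ref{3.3.3}—stated only for free modules—to apply to \emph{every} quasi-coherent quotient $\mathcal M_i$. Over a general commutative ring this reduction would break down, which is why the corollary is confined to $R=K$ a field and why the more delicate family $\mathfrak F$ must be introduced later to recover analogous closure properties in the general setting.
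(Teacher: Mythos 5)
Your argument is exactly the paper's proof, spelled out: the paper's one-line justification cites precisely Theorem \ref{P2} (to write $\mathbb M^*=\ilim{i\in I}\mathcal M_i^*$), Example \ref{3.3.3} (applied to each $\mathcal M_i^*$, using that every vector space over $K$ is free), and Note \ref{3.3.4} (closure under direct limits). The proposal is correct and takes essentially the same route, with the useful added remark about why the field hypothesis is needed.
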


\begin{proof} It is a consequence of Theorem \ref{P2}, Example \ref{3.3.3} and Note \ref{3.3.4}.\end{proof}

\begin{definition} A functor of modules $\mathbb M$ is said to be (linearly) separated if
for each commutative $R$-algebra  $S$ and  $m\in\mathbb M(S)$ there exist a commutative $S$-algebra $T$ and a $w\in\mathbb M^*(T)$ such that $w(m)\neq 0$ (that is,
the natural morphism
$\mathbb M\to \mathbb M^{**}$, $m\mapsto \tilde m$, where $\tilde m(w):=w(m)$ for all $w\in\mathbb M^*$, is injective).\end{definition}

Every functor of submodules of a separated functor of modules is separated.

\begin{example} If $\mathbb M$ is a dual functor of modules, then it is separated: Given $0\neq w\in \mathbb M=\mathbb N^*$, there exists an $n\in \mathbb N$ such that $w(n)\neq 0$.
Let $\tilde n\in \mathbb M^*$ be defined by $\tilde n(w'):=w'(n)$, for all $w'\in \mathbb M$. Then $\tilde n(w)\neq 0$.\end{example}

\begin{theorem} \label{QHom} Let $\mathbb M$ be a functor of $\mathcal R$-modules. $\mathbb M$ is D-proquasi-coherent if and only if the morphism
$$\Hom_{\mathcal R}(\mathbb M,\mathbb M')\to \Hom_{R}(\mathbb M(R),\mathbb M'(R)),\quad f\mapsto f_R$$
is injective, for all separated $\mathcal R$-modules, $\mathbb M'$ (such that ${\mathbb M'}^{*}$ are well defined functors).\end{theorem}

\begin{proof} By Theorem \ref{W1}, we only have to prove the necessity. The morphism $\Hom_{\mathcal R}(\mathbb M,\mathbb M')\to
\Hom_{\mathcal R}(\mathbb M'^*,\mathbb M^*)$, $f\mapsto f^*$ is injective:
If $f\neq 0$ there exists an $m\in \mathbb M$ such that $f(m)\neq 0$. Then, there exists a
$w'\in {\mathbb M'}^*$ such that $0\neq w'(f(m))=f^*(w')(m)$. Therefore, $f^*(w')\neq 0$ and $f^*\neq 0$.

From the diagram
$$\xymatrix{\Hom_{\mathcal R}(\mathbb M,\mathbb M') \ar@{^{(}->}[r] \ar[d]
& \Hom_{\mathcal R}(\mathbb M'^*,\mathbb M^*) \ar@{^{(}->}[r]^-{\text{\ref{dualq}}}
&   \Hom_{\mathcal R}(\mathbb M'^*,\mathbb M(\mathcal R)^*)\\
\Hom_{R}(\mathbb M(R),\mathbb M'(R)) \ar@{=}[r] & \Hom_{\mathcal R}(\mathbb M(\mathcal R),\mathbb M')
\ar[ur]&}$$
 one deduces that the morphism $\Hom_{\mathcal R}(\mathbb M,\mathbb M')\to \Hom_{R}(\mathbb M(R),\mathbb M'(R))$ is injective.\end{proof}

\begin{corollary} Let  $R=K$ be a field and let $\mathbb M$, $\mathbb M'$ be D-proquasi-coherent modules, then $\mathbb M\otimes_{\mathcal K}\mathbb M'$ is D-proquasi-coherent.\end{corollary}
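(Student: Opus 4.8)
The plan is to verify the criterion of Theorem~\ref{W1}: I must show that for every $K$-vector space $N$ the map
\[
\Hom_{\mathcal K}(\mathbb M\otimes_{\mathcal K}\mathbb M',\mathcal N)\to \Hom_K(\mathbb M(K)\otimes_K\mathbb M'(K),N),\qquad f\mapsto f_K,
\]
is injective. First I would rewrite the source by the tensor--$\mathbb Hom$ adjunction (the same one used in Proposition~\ref{2.4}), namely $\Hom_{\mathcal K}(\mathbb M\otimes_{\mathcal K}\mathbb M',\mathcal N)=\Hom_{\mathcal K}(\mathbb M,\mathbb Hom_{\mathcal K}(\mathbb M',\mathcal N))$. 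Writing $\mathbb P:=\mathbb Hom_{\mathcal K}(\mathbb M',\mathcal N)$, a morphism $f$ corresponds to a morphism $g\colon \mathbb M\to\mathbb P$, where $g_S(m)$ is the morphism $m'\mapsto f(m\otimes m')$. Since the adjunction is a bijection, it suffices to prove that $f_K=0$ forces $g=0$.

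The argument then splits into one application of D-proquasi-coherence for each factor. Assume $f_K=0$. For each $m\in\mathbb M(K)$ the morphism $g_K(m)\colon \mathbb M'\to\mathcal N$ has $K$-component $m'\mapsto f_K(m\otimes m')=0$; as $\mathbb M'$ is D-proquasi-coherent, Theorem~\ref{W1} applied to $\mathbb M'$ forces $g_K(m)=0$. Since this holds for every $m$, we get $g_K=0$. I would now like to conclude $g=0$ from $g_K=0$, and this is exactly the content of Theorem~\ref{QHom} applied to the D-proquasi-coherent functor $\mathbb M$ with target $\mathbb P$ --- provided $\mathbb P$ is separated (the well-definedness of $\mathbb P^*$ required there being covered by the paper's standing convention). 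Thus $g=0$, hence $f=0$, which proves the injectivity and therefore the D-proquasi-coherence of $\mathbb M\otimes_{\mathcal K}\mathbb M'$.

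The one substantive point, and the step I expect to be the main obstacle, is showing that $\mathbb P=\mathbb Hom_{\mathcal K}(\mathbb M',\mathcal N)$ is separated. Over the field $K$ I would write $N=\oplus_I K$, so that $\mathcal N=\oplus_I\mathcal K$ embeds as a subfunctor in $\prod_I\mathcal K$. Because $\mathbb Hom_{\mathcal K}(\mathbb M',-)$ preserves monomorphisms (composing with a mono is injective on Hom-sets), this yields an inclusion $\mathbb P\hookrightarrow \mathbb Hom_{\mathcal K}(\mathbb M',\prod_I\mathcal K)=\prod_I\mathbb Hom_{\mathcal K}(\mathbb M',\mathcal K)=\prod_I\mathbb M'^{*}$, where the middle equality is that $\mathbb Hom$ into a product is the product of the $\mathbb Hom$'s (checked levelwise, using that restriction commutes with products). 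Each $\mathbb M'^{*}$ is a dual functor, hence separated, and an arbitrary product of separated functors is separated: a nonzero element is nonzero in some factor, where a separating functional exists, and one precomposes it with the projection. Since every functor of submodules of a separated functor is separated, $\mathbb P$ is separated, as required. (Alternatively, one checks directly that $\mathbb Hom_{\mathcal K}(\mathbb M',\mathbb Q)$ is separated whenever $\mathbb Q$ is, by pairing a separating functional $w\in\mathbb Q^{*}$ with evaluation at a suitable $m'\in\mathbb M'$; applied to the reflexive, hence separated, module $\mathbb Q=\mathcal N$ this gives the same conclusion.)
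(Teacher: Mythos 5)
Your proof is correct, and it uses the same essential ingredients as the paper's --- the tensor--Hom adjunction, Theorem~\ref{QHom} applied to $\mathbb M$, and the D-proquasi-coherence of $\mathbb M'$ --- but it runs the verification at a higher level of generality than necessary, which is where all your extra work comes from. The paper invokes the corollary following Theorem~\ref{W1}: over a field, D-proquasi-coherence only needs to be checked for $N=K$, i.e.\ one only needs $(\mathbb M\otimes\mathbb M')^*(K)\hookrightarrow(\mathbb M\otimes\mathbb M')(K)^*$. With $N=K$ the adjunction lands in $\Hom_{\mathcal K}(\mathbb M,\mathbb M'^*)$, whose target $\mathbb M'^*$ is a dual functor and hence separated for free, so Theorem~\ref{QHom} applies immediately; and the second factor's D-proquasi-coherence is used exactly in the form $\mathbb M'^*(K)\hookrightarrow\mathbb M'(K)^*$ of that same corollary. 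This yields the one-line chain $(\mathbb M\otimes\mathbb M')^*(K)=\Hom_{\mathcal K}(\mathbb M,\mathbb M'^*)\hookrightarrow\Hom_K(\mathbb M(K),\mathbb M'^*(K))\hookrightarrow\Hom_K(\mathbb M(K),\mathbb M'(K)^*)=(\mathbb M\otimes\mathbb M')(K)^*$. By contrast, checking the full criterion of Theorem~\ref{W1} for arbitrary $N$ forces you to take the target $\mathbb Hom_{\mathcal K}(\mathbb M',\mathcal N)$, whose separatedness is not automatic and requires your embedding into $\prod_I\mathbb M'^*$ (correct, and in the spirit of the diagram in the proof of that corollary, but an extra step the paper avoids). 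Your route buys nothing additional over a field; the paper's shortcut is the cleaner one here, though your argument is the one that would survive if the field-case reduction were unavailable.
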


\begin{proof} It is due to the inclusion $(\mathbb M\otimes_{\mathcal K}\mathbb M')^*(K)\!  =\!\Hom_{\mathcal K}(\mathbb M\otimes \mathbb M',\mathcal K)\!=\!\Hom_{\mathcal K}(\mathbb M,{\mathbb M'}^*)$ $\overset{\text{\ref{QHom}}}\hookrightarrow \Hom_{K}(\mathbb M(K),{\mathbb M'}^*(K))
\hookrightarrow \Hom_{K}(\mathbb M(K),\mathbb M'(K)^*)=\Hom_{K}(\mathbb M(K)\otimes\mathbb M'(K),K)$ $=(\mathbb M\otimes_{\mathcal K}\mathbb M')(K)^*$.\end{proof}

\begin{proposition} \label{P10}
Let $\mathbb A$ be a  functor of ${\mathcal K}$-algebras and a D-proquasi-coherent module, let $\mathcal M$ and $\mathcal N$ be functors of $\mathbb A$-modules and let $M' \subset M$ be a $K$-vector subspace.  Then,
\begin{enumerate}
\item $\mathcal M'$ is a quasi-coherent
$\mathbb A$-submodule of $\mathcal M$ if and only if $M'$ is an
$\mathbb A(K)$-submodule of $M$.

\item A morphism $f\colon \mathcal M\to \mathcal N$ of functors of $\mathcal K$-modules is a morphism of $\mathbb A$-modules if and only if $f_K\colon M\to N$ is a morphism of $\mathbb A(K)$-modules.

\end{enumerate}

\end{proposition}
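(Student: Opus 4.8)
The two \emph{only if} implications are immediate: evaluating at $S=K$, a quasi-coherent $\mathbb A$-submodule restricts to an $\mathbb A(K)$-submodule, and an $\mathbb A$-linear morphism restricts to an $\mathbb A(K)$-linear one. The content is the two converses, and both will follow from one observation combined with Theorem \ref{W1}. The observation is that for every quasi-coherent module $\mathcal P$ the tensor product $\mathbb A\otimes_{\mathcal K}\mathcal P$ is D-proquasi-coherent. Indeed, over the field $K$ we may write $P=\oplus_J K$, so $\mathcal P\cong\oplus_J\mathcal K$ and hence $\mathbb A\otimes_{\mathcal K}\mathcal P\cong\oplus_J\mathbb A$, since tensoring commutes with direct sums level-wise on each $S$. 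A direct sum of D-proquasi-coherent modules is again D-proquasi-coherent: a Hom out of a coproduct is a product of Homs, so $(\oplus_J\mathbb A)^*=\prod_J\mathbb A^*$, and likewise $(\oplus_J\mathbb A)(\mathcal K)^*=\prod_J\mathbb A(\mathcal K)^*$; the natural morphism between them is the product of the injections $\mathbb A^*\hookrightarrow\mathbb A(\mathcal K)^*$ and is therefore injective (Definition \ref{dualq}). As $\mathbb A$ is D-proquasi-coherent by hypothesis, so is $\mathbb A\otimes_{\mathcal K}\mathcal P$.

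For part (2), assume $f_K\colon M\to N$ is $\mathbb A(K)$-linear. The $\mathbb A$-module structures of $\mathcal M$ and $\mathcal N$ yield a morphism of $\mathcal K$-modules $\phi\colon \mathbb A\otimes_{\mathcal K}\mathcal M\to\mathcal N$, $\phi(a\otimes m):=a\cdot f(m)-f(a\cdot m)$, which vanishes precisely when $f$ is $\mathbb A$-linear. Its source is D-proquasi-coherent by the observation and its target $\mathcal N$ is quasi-coherent, so by Theorem \ref{W1} the assignment $\phi\mapsto\phi_K$ is injective. But $\phi_K(a\otimes m)=a\cdot f_K(m)-f_K(a\cdot m)=0$ because $f_K$ is $\mathbb A(K)$-linear; hence $\phi=0$ and $f$ is a morphism of $\mathbb A$-modules.

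For part (1), assume $M'$ is an $\mathbb A(K)$-submodule of $M$. Since $K$ is a field, $\otimes_K$ is exact, so $\mathcal M/\mathcal M'$ is the quasi-coherent module associated with $M/M'$. Composing the action $\mathbb A\otimes_{\mathcal K}\mathcal M'\to\mathcal M$ with the projection $\pi\colon\mathcal M\to\mathcal M/\mathcal M'$ gives a $\mathcal K$-linear morphism $g\colon\mathbb A\otimes_{\mathcal K}\mathcal M'\to\mathcal M/\mathcal M'$, $g(a\otimes m')=\overline{a\cdot m'}$, which vanishes exactly when $\mathbb A(S)\cdot\mathcal M'(S)\subseteq\mathcal M'(S)$ for every $S$, i.e.\ exactly when $\mathcal M'$ is an $\mathbb A$-submodule. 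Again the source is D-proquasi-coherent and the target quasi-coherent, so $g\mapsto g_K$ is injective by Theorem \ref{W1}; and $g_K(a\otimes m')=\overline{a\cdot m'}=0$ since $a\cdot m'\in M'$ by hypothesis. Therefore $g=0$ and $\mathcal M'$ is a quasi-coherent $\mathbb A$-submodule of $\mathcal M$.

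The only genuinely delicate point is the observation that $\mathbb A\otimes_{\mathcal K}\mathcal P$ is D-proquasi-coherent; everything else is a formal application of Theorem \ref{W1} once the obstruction morphisms $\phi$ and $g$ are written down. I would take care to verify the identifications $\mathbb A\otimes_{\mathcal K}(\oplus_J\mathcal K)=\oplus_J\mathbb A$ and $(\oplus_J\mathbb A)^*=\prod_J\mathbb A^*$ level-wise on each commutative $K$-algebra $S$, and to check that $g_S=0$ really is equivalent to the inclusion $\mathbb A(S)\cdot\mathcal M'(S)\subseteq\mathcal M'(S)$ by reducing to simple tensors via $S$-bilinearity.
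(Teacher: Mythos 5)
Your proof is correct, but it is organized differently from the paper's. You reduce both parts uniformly to Theorem \ref{W1} applied to an obstruction morphism whose source is $\mathbb A\otimes_{\mathcal K}\mathcal P$ for a quasi-coherent $\mathcal P$, and you establish D-proquasi-coherence of that source by writing $\mathbb A\otimes_{\mathcal K}(\oplus_J\mathcal K)=\oplus_J\mathbb A$ and checking that the dual of a direct sum of D-proquasi-coherent modules is a product of injections. That computation is sound (and note that the point you flag as the only delicate one is in fact already available in the paper: the unnamed corollary following Theorem \ref{QHom} states that a tensor product of two D-proquasi-coherent $\mathcal K$-modules is D-proquasi-coherent, so you could simply cite it). The paper instead avoids invoking D-proquasi-coherence of the tensor product: for part (2) it uses the adjunction $\Hom_{\mathcal K}(\mathbb A\otimes\mathcal M,\mathcal N)=\Hom_{\mathcal K}(\mathbb A,\mathbb Hom_{\mathcal K}(\mathcal M,\mathcal N))$ together with Theorem \ref{QHom} applied to $\mathbb A$ and the separated (dual) functor $\mathbb Hom_{\mathcal K}(\mathcal M,\mathcal N)$, landing in $\Hom_K(\mathbb A(K)\otimes M,N)$; for part (1) it factors each individual morphism $\mathbb A\to\mathcal M$, $a\mapsto a\cdot m'$, through $\mathcal M'$ using the quasi-coherence of images (Note \ref{3.14}), and then assembles these factorizations $S$-linearly. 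Your version buys a single mechanism for both statements and makes the role of Theorem \ref{W1} completely explicit; the paper's version buys economy of hypotheses, using only that $\mathbb A$ itself is D-proquasi-coherent. Both are valid, and your auxiliary verifications (the level-wise identifications and the equivalence of $g_S=0$ with stability of $\mathcal M'(S)$ under the action) are exactly the right things to check.
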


\begin{proof} $(1)$
Obviously, if $\mathcal M'$ is an $\mathbb A$-submodule of $\mathcal M$ then $M'$ is
an $\mathbb A(K)$-submodule of $M$. Inversely, let us assume  $M'$ is an $
\mathbb A(K)$-submodule of $M$ and let us consider the natural morphism of
multiplication $\mathbb A \otimes_{\mathcal K} {\mathcal M}' \to {\mathcal M}$. The morphisms $\mathbb A\to \mathcal M$, $a\mapsto a\cdot m'$, for each $m'\in M'$, factors via $\mathcal M'$, then $\mathbb A \otimes_{\mathcal K} {\mathcal M}'\to {\mathcal M}$ factors via $\mathcal M'$.
Therefore, $\mathcal M'$ is a functor of $\mathbb A$-submodules of $\mathcal M$.

$(2)$ The morphism $f$ is a morphism of $\mathbb A$-modules if and only if $F\colon \mathbb A\otimes \mathcal M\to \mathcal N$, $F(a\otimes m):=f(am)-af(m)$ is the zero morphism. Likewise, $f_K$ is a morphism of $\mathbb A(K)$-modules if and only if $F_K\colon \mathbb A(K)\otimes M\to N$, $F_K(a\otimes m)=f_K(am)-af_K(m)$ is the zero morphism. Now, the proposition is a consequence of the inclusions,

$$\aligned \Hom_{\mathcal K}(\mathbb A\otimes \mathcal M,\mathcal N) & =
\Hom_{\mathcal K}(\mathbb A,\mathbb Hom_{\mathcal K}(\mathcal M,\mathcal N))
\underset{\text{\ref{QHom}}}\subseteq \Hom_{K}(\mathbb A(K),\Hom_{\mathcal K}(\mathcal M,\mathcal N))
\\ & \subseteq  \Hom_{K}(\mathbb A(K),\Hom_{K}(M,N))=
  \Hom_{K}(\mathbb A(K)\otimes M,N)\endaligned$$

\end{proof}

\begin{definition} Let $A$ be an $R$-algebra. The associated functor $\mathcal A$ is obviously a functor of $\mathcal R$-algebras. We will say that $\mathcal A$ is a quasi-coherent $\mathcal R$-algebra.\end{definition}

\begin{proposition} \label{invqua6} Let $\mathbb A$ be a  functor of ${\mathcal K}$-algebras and a D-proquasi-coherent module, and  let $B$ be a $K$-algebra.
 Every morphism of $\mathcal K$-algebras $\phi\colon \mathbb A\to \mathcal B$ uniquely factors  through an epimorphism of functors of algebras onto the quasi-coherent algebra associated with $\Ima \phi_K$.\end{proposition}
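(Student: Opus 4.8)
The plan is to regard $\phi$ first as a morphism of $\mathcal K$-modules, to identify its image with the quasi-coherent algebra $\mathcal C$ associated with $C:=\Ima\phi_K$, and then to promote the resulting module factorization to a factorization of functors of algebras. Since $\phi_K\colon \mathbb A(K)\to B$ is a morphism of $K$-algebras, $C=\Ima\phi_K$ is a $K$-subalgebra of $B$, so $\mathcal C$ is indeed the quasi-coherent algebra associated with $\Ima\phi_K$, and I must produce a factorization $\mathbb A\to\mathcal C\hookrightarrow\mathcal B$.

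First I would compute the image of $\phi$ using that $\mathbb A$ is D-proquasi-coherent. Put $D:=\Coker\phi_K=B/C$. By Lemma \ref{lemaD}, the cokernel of $\phi\colon\mathbb A\to\mathcal B$ (viewed as a morphism of $\mathcal K$-modules) is the quasi-coherent module associated with $D$, i.e. $\Coker\phi=\mathcal D$. Hence $\Ima\phi=\Ker(\mathcal B\to\mathcal D)$. Because $K$ is a field, the functor $V\rightsquigarrow\mathcal V$ is exact (at each $S$ it is $-\otimes_K S$), so evaluating at any commutative $K$-algebra $S$ gives $\Ker(B\otimes_K S\to D\otimes_K S)=C\otimes_K S=\mathcal C(S)$; thus $\Ima\phi=\mathcal C$. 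Consequently $\phi$ factors as $\mathbb A\overset{\psi}{\longrightarrow}\mathcal C\overset{\iota}{\hookrightarrow}\mathcal B$, where $\psi_S$ is surjective for every $S$ and $\iota$ is the inclusion of $\mathcal C$ in $\mathcal B$.

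It remains to verify that this is a factorization of functors of algebras. The inclusion $\iota\colon\mathcal C\hookrightarrow\mathcal B$ is a morphism of functors of algebras and is a monomorphism. Since $\phi=\iota\circ\psi$ is a morphism of algebras and $\iota$ is injective, for $a,a'\in\mathbb A(S)$ the identities $\iota(\psi(aa'))=\phi(aa')=\phi(a)\phi(a')=\iota(\psi(a)\psi(a'))$ force $\psi(aa')=\psi(a)\psi(a')$, and likewise $\psi$ preserves the unit; hence $\psi$ is a morphism of functors of algebras. As $\psi$ is surjective at each $S$, it is an epimorphism of functors of algebras. Uniqueness is then immediate: if $\phi=\iota\circ\psi'$ for another morphism of algebras $\psi'$, then $\iota\circ\psi=\iota\circ\psi'$ and the injectivity of $\iota$ yields $\psi=\psi'$.

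The only substantive step is the identification $\Ima\phi=\mathcal C$, which is precisely where D-proquasi-coherence of $\mathbb A$ is used, through Lemma \ref{lemaD} (equivalently Note \ref{3.14}). The passage from the module factorization to the algebra factorization is purely formal, resting entirely on the injectivity of the inclusion $\mathcal C\hookrightarrow\mathcal B$; I expect no genuine difficulty there.
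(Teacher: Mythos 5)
Your proposal is correct and follows essentially the same route as the paper: the paper invokes Note \ref{3.14} (which is exactly your combination of Lemma \ref{lemaD} with exactness of $-\otimes_K S$ over a field) to identify $\Ima\phi$ with the quasi-coherent module associated with $\Ima\phi_K$, and then observes that the factorization is automatically one of functors of algebras. Your only addition is to spell out the last step via injectivity of $\mathcal C\hookrightarrow\mathcal B$, which the paper leaves as ``obviously''.
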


\begin{proof} By Note \ref{3.14}, the morphism $\phi\colon \mathbb A\to \mathcal B$ uniquely  factors through
an epimorphism $\phi'\colon \mathbb A\to \mathcal B'$, where $B':=\Ima\phi_K$. Obviously $B'$ is a $K$-subalgebra of $B$ and $\phi'$ is a morphism of functors of algebras.\end{proof}

\section{Proquasi-coherent modules}

\begin{definition} A functor of $\mathcal R$-modules is said to be a proquasi-coherent module if it is an inverse limit of quasi-coherent modules.\end{definition}

In this section, $R=K$ will be a field.

\begin{proposition}\label{2125} Let $R=K$ be a field and let $\mathbb M$ be a $\mathcal K$-module
such that $\mathbb M^*$ is well defined.
$\mathbb M$ is separated if and only if the
morphism $\mathbb M \to \bar{\mathbb M}:=(\mathbb M^*(\mathcal K))^*$ is injective.
Therefore, $\mathbb M$ is separated if and only if it is a $\mathcal K$-submodule of a $\mathcal K$-module scheme.
\end{proposition}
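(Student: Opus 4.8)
The plan is to first make the functor $\bar{\mathbb M}$ and the morphism $\mathbb M\to\bar{\mathbb M}$ completely explicit. Writing $V:=\mathbb M^*(K)=\Hom_{\mathcal K}(\mathbb M,\mathcal K)$, the functor $\mathbb M^*(\mathcal K)$ is the quasi-coherent module $\mathcal V$, so $\bar{\mathbb M}=\mathcal V^*$ is a $\mathcal K$-module scheme with $\bar{\mathbb M}(S)=\Hom_S(V\otimes_K S,S)=\Hom_K(V,S)$. Tracing the definitions, the natural morphism $\mathbb M\to\bar{\mathbb M}$ (the composite of $\mathbb M\to\mathbb M^{**}$ with the dual of the canonical morphism $\mathbb M^*(\mathcal K)\to\mathbb M^*$) sends $m\in\mathbb M(S)$ to the $K$-linear functional $w\mapsto w_S(m)$ on $V$. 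In particular $m$ lies in the kernel at $S$ precisely when $w_S(m)=0$ for every $w\in\mathbb M^*(K)$.

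For the easy implication and for the second assertion, I would observe that $\bar{\mathbb M}$ is a module scheme, hence a dual functor, hence separated (as in the Example preceding the statement); since every functor of submodules of a separated functor is separated, injectivity of $\mathbb M\to\bar{\mathbb M}$ already forces $\mathbb M$ to be separated and exhibits $\mathbb M$ as a $\mathcal K$-submodule of a module scheme. Conversely, any $\mathcal K$-submodule of a module scheme is a submodule of a separated functor, hence separated. This settles the $(\Leftarrow)$ direction of the equivalence together with both directions of the final \emph{Therefore}.

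The substantial direction is that separatedness implies injectivity of $\mathbb M\to\bar{\mathbb M}$; equivalently, the kernel of $\mathbb M\to\bar{\mathbb M}$ equals the kernel of $\mathbb M\to\mathbb M^{**}$, which vanishes by hypothesis. Fix $m\in\mathbb M(S)$ with $w_S(m)=0$ for all $w\in\mathbb M^*(K)$; I must show that $w_T(m)=0$ for every $S$-algebra $T$ and every $w\in\mathbb M^*(T)$ (this is exactly the vanishing of the image of $m$ in $\mathbb M^{**}$), whence $m=0$ by separatedness. Given such a $w$, Corollary \ref{adj2} identifies it with a $\mathcal K$-linear morphism $\phi\colon\mathbb M\to\mathcal T$, where $\mathcal T$ is the quasi-coherent module associated with $T$ viewed as a $K$-vector space. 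Here the field hypothesis enters decisively: choosing a $K$-basis $\{e_j\}_{j\in J}$ of $T$ gives projections $\pi_j\colon T\to K$, hence $\mathcal K$-linear morphisms $\bar\pi_j\circ\phi\in\mathbb M^*(K)$; the hypothesis gives $(\pi_j\otimes\mathrm{id}_S)(\phi_S(m))=0$ for all $j$, and since $T\otimes_K S=\oplus_j S$ is free this forces $\phi_S(m)=0$ in $\mathcal T(S)$. By naturality of $\phi$ along $S\to T$, the element $\phi_T(m)\in T\otimes_K T$ is the image of $\phi_S(m)$ and hence also vanishes; unwinding the adjunction of \ref{adj} (under which $w_T$ is $\phi_T$ followed by the multiplication $T\otimes_K T\to T$) yields $w_T(m)=0$, as required.

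The main obstacle is precisely this descent step: reducing an arbitrary functional $w$ defined over a possibly huge $S$-algebra $T$ to functionals defined over the base field $K$. The freeness of $T$ as a $K$-module --- the only place the field hypothesis is used --- is what makes the projections $\pi_j$ detect the vanishing of $\phi_S(m)$, and the compatibility between $w$ and $\phi$ supplied by the Formula of adjoint functors is what transports this vanishing back to $w_T(m)$.
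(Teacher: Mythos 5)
Your proof is correct and follows essentially the same route as the paper: both identify $\bar{\mathbb M}$ with $\mathcal V^*$ for $V=\mathbb M^*(K)$, handle the easy direction via separatedness of dual functors, and for the substantial direction use the field hypothesis to write $T=\oplus_J K\cdot e_j$ and decompose an arbitrary $w\in\mathbb M^*(T)=\Hom_{\mathcal K}(\mathbb M,\mathcal T)$ into components in $\mathbb M^*(K)$. Your write-up is in fact slightly more careful than the paper's in tracing the adjunction of \ref{adj} back to $w_T(m)$.
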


\begin{proof}
Assume $\mathbb M$ is separated. Let $m \in \mathbb M(S)$ be such that
$m = 0$ in $\bar{\mathbb M}(S)$. $ \bar{\mathbb M}(S)= {\mathbb M^*(\mathcal K)^*}(S)  {=} {\rm Hom}_K
(\mathbb M^*(K), S)$, then $m(w):= w(m)= 0$ for all $w \in \mathbb M^*(K)$.

Given a commutative $S$-algebra $T$, if one writes $T = \oplus_{i\in I} K \cdot e_i$,
one notices that $$\mathbb M^*(T)
\overset{\text{\ref{adj2}}}= {\rm Hom}_{\mathcal K} (\mathbb M, {\mathcal T}) = {\rm Hom}_{\mathcal K} (\mathbb M,
\oplus_I{{\mathcal K}}) \subset \prod_I {\rm Hom}_{\mathcal K} (\mathbb M, {{\mathcal K}})$$ which
assigns to every $w_T \in \mathbb M^*(T)$ a $(w_i) \in \prod \mathbb M^*(K)$.
Explicitly, given $m' \in \mathbb M(T)$, then $w_T(m')= \sum_i w_i(m') \cdot
e_i$. Therefore, $w_T(m)=0$ for all $w_T \in \mathbb M^*(T)$. As $\mathbb M$ is separated, this means that
$m = 0$, i.e., the morphism $\mathbb M \to \bar{\mathbb M}$ is injective.

Now, assume $\mathbb M \to \bar{\mathbb M}$ is injective. Observe that $\bar{\mathbb M}$ is separated because is reflexive. Then $\mathbb M$ is separated.

Finally, the second statement of the proposition is obvious.
\end{proof}

\begin{proposition} \label{P6} If $\mathbb M$ is a  proquasi-coherent $\mathcal K$-module then it is a dual $\mathcal K$-module and it is a direct  limit of $\mathcal K$-shemes of modules.
In particular, proquasi-coherent modules are D-proquasi-coherent modules.
\end{proposition}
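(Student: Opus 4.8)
The plan is to dualize the defining inverse system and exploit the reflexivity of quasi-coherent modules, Theorem \ref{reflex}, together with the fact that dualization turns direct limits into inverse limits.

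First I would write $\mathbb M=\plim{i\in I}\mathcal V_i$ for some inverse system of quasi-coherent modules $\{\mathcal V_i\}_{i\in I}$ and set $\mathbb N:=\ilim{i\in I}\mathcal V_i^*$, a direct limit of $\mathcal K$-module schemes. The key computation is that taking duals converts this direct limit into the inverse limit of the duals: for every commutative $K$-algebra $S$,
\begin{equation*}
\mathbb N^*(S)=\Hom_{\mathcal K}\Bigl(\ilim{i\in I}\mathcal V_i^*,\mathcal S\Bigr)=\plim{i\in I}\Hom_{\mathcal K}(\mathcal V_i^*,\mathcal S)=\plim{i\in I}\mathcal V_i^{**}(S),
\end{equation*}
where the first equality is Corollary \ref{adj2} and the middle one is the universal property of the direct limit. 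By Theorem \ref{reflex}, $\mathcal V_i^{**}=\mathcal V_i$ (compatibly with the transition maps, since dualizing twice restores their direction), so $\mathbb N^*=\plim{i\in I}\mathcal V_i=\mathbb M$. This already shows $\mathbb M$ is a dual $\mathcal K$-module.

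Next I would observe that $\mathbb N$ is D-proquasi-coherent. Since $K$ is a field, each $V_i$ is free, so each module scheme $\mathcal V_i^*$ is D-proquasi-coherent by Example \ref{3.3.3}; as $\mathbb N$ is a direct limit of such functors, Note \ref{3.3.4} gives that $\mathbb N$ is D-proquasi-coherent. Applying Theorem \ref{P2} to $\mathbb N$ yields $\mathbb M=\mathbb N^*=\ilim{j\in J}\mathcal M_j^*$, where $\{\mathcal M_j\}_{j\in J}$ ranges over the quasi-coherent quotients of $\mathbb N$; this exhibits $\mathbb M$ as a direct limit of $\mathcal K$-module schemes, the second assertion. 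Finally, D-proquasi-coherence of $\mathbb M$ follows at once: since $\mathbb N$ is D-proquasi-coherent, Corollary \ref{QHom2} shows $\mathbb N^*=\mathbb M$ is D-proquasi-coherent (equivalently, $\mathbb M=\ilim{j\in J}\mathcal M_j^*$ is a direct limit of D-proquasi-coherent modules, again by Note \ref{3.3.4}).

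The routine verifications are packaged in the cited results, so the only point demanding care is the identification $\mathbb N^*=\mathbb M$: one must check that the double-dual transition maps $f_{ij}^{**}$ agree with the original $f_{ij}\colon\mathcal V_i\to\mathcal V_j$ under the reflexivity isomorphism, so that the inverse system $\{\mathcal V_i^{**}\}$ coincides with $\{\mathcal V_i\}$ and their inverse limits match. I expect this bookkeeping with the arrows, together with confirming that $\mathbb N^*$ is a well-defined functor, to be the main (if modest) obstacle.
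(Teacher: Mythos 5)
Your proposal is correct and follows essentially the same route as the paper: the paper's proof is the one-line identity $\mathbb M=\plim{i}\mathcal M_i=(\ilim{i}\mathcal M_i^*)^*$ followed by an appeal to Theorem \ref{P2} applied to the D-proquasi-coherent functor $\ilim{i}\mathcal M_i^*$, which is exactly your argument with the intermediate steps (Corollary \ref{adj2}, Theorem \ref{reflex}, Example \ref{3.3.3}, Note \ref{3.3.4}) written out. The extra care you flag about the transition maps under double dualization is legitimate bookkeeping that the paper leaves implicit.
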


\begin{proof} $\mathbb M=\plim{} \mathcal M_i=(\ilim{} \mathcal M_i^*)^*$. As $\ilim{} \mathcal M_i^*$ is D-proquasi-coherent, its dual, which is $\mathbb M$, is a direct limit of $\mathcal K$-module schemes, by Theorem \ref{P2}.\end{proof}

\begin{theorem} \label{RPQ} Let $R=K$ be a field. $\mathbb M$ is a reflexive functor of $\mathcal K$-modules if and only if
$\mathbb M$ is equal to the inverse limit of its quasi-coherent quotients.
In particular, reflexive functors of $\mathcal K$-modules are proquasi-coherent.\end{theorem}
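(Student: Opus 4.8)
The plan is to reduce both implications to Theorem \ref{P2}, which, for a $D$-proquasi-coherent $\mathbb M$, computes $\mathbb M^*$ as the direct limit $\ilim{i\in I}\mathcal M_i^*$ of the duals of the quasi-coherent quotients $\{\mathcal M_i\}_{i\in I}$ of $\mathbb M$. The bridge between the two formulations is the elementary fact that dualizing interchanges direct and inverse limits: from the pointwise formulas $(\ilim{i}\mathbb M_i)(S)=\ilim{i}\mathbb M_i(S)$ and $\mathbb M^*(S)=\Hom_{\mathcal S}(\mathbb M_{|S},\mathcal S)$ one reads off $(\ilim{i}\mathbb M_i)^*=\plim{i}\mathbb M_i^*$. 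Applying this with $\mathbb M_i=\mathcal M_i^*$ and using $\mathcal M_i^{**}=\mathcal M_i$ (Theorem \ref{reflex}), I obtain, whenever $\mathbb M$ is $D$-proquasi-coherent,
$$\mathbb M^{**}=(\ilim{i\in I}\mathcal M_i^*)^*=\plim{i\in I}\mathcal M_i^{**}=\plim{i\in I}\mathcal M_i.$$
A short diagram chase then shows that, under this identification, the biduality morphism $\mathbb M\to\mathbb M^{**}$ becomes the canonical morphism $\mathbb M\to\plim{i\in I}\mathcal M_i$ induced by the quotient projections (here one uses that $\mathcal M_i\to\mathcal M_i^{**}$ is the identity). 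Thus, \emph{for $D$-proquasi-coherent $\mathbb M$}, reflexivity is equivalent to $\mathbb M$ being the inverse limit of its quasi-coherent quotients.

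Granting this, the implication $(\Leftarrow)$ is immediate: if $\mathbb M=\plim{i\in I}\mathcal M_i$, then $\mathbb M$ is by definition proquasi-coherent, hence $D$-proquasi-coherent by Proposition \ref{P6}, so the displayed computation gives $\mathbb M^{**}=\mathbb M$.

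For the implication $(\Rightarrow)$ the only missing ingredient is that a reflexive $\mathbb M$ is $D$-proquasi-coherent, and I expect this to be the main obstacle: attempting to verify the defining injectivity $\mathbb M^*\hookrightarrow\mathbb M(\mathcal R)^*$ head-on runs in circles, as it is precisely the assertion that morphisms out of $\mathbb M$ are detected on $K$-points. The key observation I would use is that unwinding Definition \ref{dualq} for the functor $\mathbb M^*$ shows that $\mathbb M^*$ is $D$-proquasi-coherent if and only if the natural morphism $\mathbb M^{**}\to(\mathbb M^*(\mathcal K))^*=\bar{\mathbb M}$ is injective, since $(\mathbb M^*)(\mathcal R)=\mathbb M^*(\mathcal K)$. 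Now a reflexive $\mathbb M=(\mathbb M^*)^*$ is a dual functor, hence separated, so by Proposition \ref{2125} the morphism $\mathbb M\to\bar{\mathbb M}$ is injective; under the reflexivity isomorphism $\mathbb M=\mathbb M^{**}$ this is exactly the morphism just described. Hence $\mathbb M^*$ is $D$-proquasi-coherent, and Corollary \ref{QHom2} upgrades this to $D$-proquasi-coherence of $(\mathbb M^*)^*=\mathbb M^{**}=\mathbb M$.

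With $\mathbb M$ now $D$-proquasi-coherent, the displayed computation yields $\mathbb M=\mathbb M^{**}=\plim{i\in I}\mathcal M_i$, i.e.\ $\mathbb M$ equals the inverse limit of its quasi-coherent quotients; being an inverse limit of quasi-coherent modules, it is in particular proquasi-coherent, which settles the final assertion. The two care points I would make explicit are the compatibility of the canonical maps in the diagram chase (so that the abstract identification $\mathbb M^{**}\cong\plim{i\in I}\mathcal M_i$ genuinely realizes the biduality morphism) and the verification that the morphism of Proposition \ref{2125} coincides with the natural morphism $\mathbb M^{**}\to\bar{\mathbb M}$; both are routine but should be recorded.
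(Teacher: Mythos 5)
Your proposal is correct and follows essentially the same route as the paper: separatedness of a dual functor plus Proposition \ref{2125} to establish D-proquasi-coherence, then Theorem \ref{P2} and dualization (interchanging $\ilim{}$ and $\plim{}$) to identify $\mathbb M^{**}$ with $\plim{i}\mathcal M_i$, and Proposition \ref{P6} for the converse. The only divergence is minor and harmless: the paper gets D-proquasi-coherence of a reflexive $\mathbb M$ in one step by applying Proposition \ref{2125} directly to the separated dual functor $\mathbb M^*$ (whose $\bar{\phantom{M}}$-morphism is $\mathbb M^*\to(\mathbb M^{**}(\mathcal K))^*=\mathbb M(\mathcal K)^*$), whereas you apply it to $\mathbb M$ itself and then invoke Corollary \ref{QHom2} to pass from $\mathbb M^*$ to $\mathbb M^{**}=\mathbb M$.
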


\begin{proof} Suppose that $\mathbb M$ is reflexive. $\mathbb M^*$ is separated, because it is a dual functor of modules.  By Proposition \ref{2125}, the morphism
$\mathbb M^*\to \mathbb M(\mathcal K)^* $ is injective.
Then, $\mathbb M$ is D-proquasi-coherent.
Let $\{\mathcal M_i\}_{i\in I}$ be the set of  all
quasi-coherent quotients of $\mathbb M$. Then,
$\mathbb M^*=\ilim{i\in I}  \mathcal M_i^*$, by Theorem \ref{P2}. Therefore,
$\mathbb M=\mathbb M^{**}=\plim{i\in I}  \mathcal M_i$.

Suppose now that $\mathbb M$ is equal to the inverse limit of its quasi-coherent quotients. By Proposition \ref{P6}, $\mathbb M$ is D-proquasi-coherent.
By Theorem \ref{P2}, $\mathbb M=\mathbb M^{**}$.

\end{proof}

Let $R={\mathbb Z}$ and $M={\mathbb Z}/2{\mathbb Z}$. Then, $\mathbb M:=\mathcal M^*$ is reflexive but it is not D-proquasi-coherent, because $\mathbb M(\mathcal R)^*=0$,  since $\mathbb M(R)=0$.

\begin{proposition} \label{K1} Let $f\colon \mathbb P\to \mathbb M$ be a morphism of functors of $\mathcal K$-modules.
If $\mathbb P$ is proquasi-coherent and $\mathbb M$ is separated then
$\mathbb Ker\, f$ is proquasi-coherent.\end{proposition}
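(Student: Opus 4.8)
The plan is to reduce, in two stages, to the kernel of a surjection onto a quasi-coherent module, and then to recognize such a kernel as a direct summand of $\mathbb P$.

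First I would use the separatedness hypothesis to replace the target by a module scheme. By Proposition \ref{2125}, $\mathbb M$ separated means the natural morphism $\mathbb M\to \bar{\mathbb M}=(\mathbb M^*(\mathcal K))^*$ is injective; since $\bar{\mathbb M}$ is a module scheme, after choosing a basis of $\mathbb M^*(K)$ it is of the form $\prod_I\mathcal K$, and because $\mathbb{Ker}$ of a composition with a monomorphism is unchanged, $\mathbb{Ker} f=\mathbb{Ker}(\mathbb P\to \prod_I\mathcal K)$. Writing $\prod_I\mathcal K=\plim{J} \mathcal K^{J}$ over the finite subsets $J\subseteq I$ and using that $\mathbb{Ker}$ commutes with inverse limits (both computed objectwise), I get $\mathbb{Ker} f=\plim{J}\mathbb{Ker}(\mathbb P\to \mathcal K^{J})$. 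As an inverse limit of proquasi-coherent modules is again proquasi-coherent, it suffices to treat a morphism $h\colon \mathbb P\to \mathcal N$ with $\mathcal N$ a finite rank quasi-coherent module.

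Next I would cut down to a surjection. Since $\mathbb P$ is proquasi-coherent it is D-proquasi-coherent (Proposition \ref{P6}), so by Note \ref{3.14} the image $\Ima h=\mathcal D$ is quasi-coherent and $h$ factors as $\mathbb P\twoheadrightarrow \mathcal D\hookrightarrow \mathcal N$, whence $\mathbb{Ker} h=\mathbb{Ker}(\mathbb P\twoheadrightarrow \mathcal D)$. Over the field $K$ the surjection $h_K\colon \mathbb P(K)\to D$ admits a $K$-linear section, which by Proposition \ref{tercer} (so that $\Hom_{\mathcal K}(\mathcal D,\mathbb P)=\Hom_K(D,\mathbb P(K))$ and $\Hom_{\mathcal K}(\mathcal D,\mathcal D)=\Hom_K(D,D)$) comes from a morphism $s\colon \mathcal D\to \mathbb P$ splitting $h$. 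Hence $\mathbb P=\mathbb{Ker} h\oplus s(\mathcal D)$, so $\mathbb{Ker} h$ is a direct summand of $\mathbb P$ with quasi-coherent complement; taking duals the splitting $\mathbb P^*=(\mathbb{Ker} h)^*\oplus \mathcal D^*$ sits compatibly over $\mathbb P(\mathcal R)^*=(\mathbb{Ker} h)(\mathcal R)^*\oplus \mathcal D^*$, and injectivity of $\mathbb P^*\to\mathbb P(\mathcal R)^*$ forces injectivity on each summand, so $\mathbb{Ker} h$ is again D-proquasi-coherent (and separated, being a submodule of the dual module $\mathbb P$).

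It therefore remains to prove that a direct summand $\mathbb K$ of a proquasi-coherent module $\mathbb P=\plim{i}\mathcal P_i$ is proquasi-coherent. I would set $\mathcal R_i:=\Ima(\mathbb K\hookrightarrow \mathbb P\xrightarrow{\pi_i}\mathcal P_i)$; each $\mathcal R_i$ is quasi-coherent (the image of the D-proquasi-coherent $\mathbb K$ in a quasi-coherent module), and the transition maps $\mathcal R_j\to \mathcal R_i$ are surjective, so $\{\mathcal R_i\}$ is a Mittag--Leffler system. The natural morphism $\mathbb K\to \plim{i}\mathcal R_i$ is injective, and if it is an isomorphism then $\mathbb K$ is exhibited as an inverse limit of quasi-coherent modules. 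The main obstacle is precisely the surjectivity of $\mathbb K\to \plim{i}\mathcal R_i$: it is equivalent to the identity $\bigcap_i(\mathbb K+\mathbb{Ker}\pi_i)=\mathbb K$, that is, to the vanishing of a $\varprojlim^1$-type obstruction measuring whether $\mathbb K$ is closed in the pro-structure of $\mathbb P$ (it need not vanish for an arbitrary presentation, which is why this is the delicate point). I expect to settle it by combining the Mittag--Leffler property of $\{\mathcal R_i\}$ with the separatedness of $\mathbb P$: any putative element of the obstruction is a $\rho(p)$ with $\pi_i(p)\in\mathcal R_i$ for all $i$, and separatedness should detect it by a functional that is forced to annihilate the whole closure of $\mathbb K$, yielding $\rho(p)=0$.
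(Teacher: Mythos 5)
Your first two reductions are sound and run parallel to the paper's own argument: separatedness lets you replace $\mathbb M$ by a module scheme $\prod_I\mathcal K$ (Proposition \ref{2125}); writing this as $\plim{J}\mathcal K^{J}$ over finite subsets $J\subseteq I$ and using that kernels commute with inverse limits reduces to a morphism $h\colon\mathbb P\to\mathcal N$ with $\mathcal N$ of finite rank; and factoring $h$ through its quasi-coherent image $\mathcal D$ (Note \ref{3.14}) and lifting a $K$-linear section of $h_K$ via Proposition \ref{tercer} presents $\mathbb Ker\, h$ as a direct summand of $\mathbb P$ with finite-dimensional quasi-coherent complement. This is exactly the situation the paper reaches with its one-dimensional complement $\mathcal K\cdot v$.

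The gap is in your third stage, which is precisely the crux of the proposition, and you yourself flag it as unresolved. To get surjectivity of $\mathbb K\to\plim{i}\mathcal R_i$ with $\mathcal R_i:=\Ima(\mathbb K\to\mathcal P_i)$, your plan is to take an element $p\in\plim{i}\mathcal R_i$, find a functional on $\mathbb P$ vanishing on $\mathbb K$ but not on $p$, and argue it must also vanish on $\plim{i}\mathcal R_i$. But a functional kills $\plim{i}\mathcal R_i$ for the reason you want only if it factors through some $\mathcal P_{i}$ and kills $\mathcal R_{i}$ there; so you need every functional vanishing on $\mathbb K$ to factor through a finite stage of the given presentation, i.e.\ essentially $\mathbb P^*=\ilim{i}\mathcal P_i^*$. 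Nothing supplies this: Theorem \ref{P2} only identifies $\mathbb P^*$ with a limit over the quasi-coherent \emph{quotients} of $\mathbb P$, which need not be related to the $\mathcal P_i$, and the statement you need is of the same reflexivity-for-a-given-presentation type that the paper explicitly leaves open. (Note also that ``direct summands of proquasi-coherent modules are proquasi-coherent'' appears in the paper only as a \emph{corollary} of this proposition, so reducing to it without an independent proof is circular in spirit.) The paper closes exactly this step by working with the complement instead of the kernel: it sets $\bar{\mathcal V}_i:=\mathcal V_i/\langle v_i\rangle$, dualizes the exact sequences $0\to\mathcal K v_i\to\mathcal V_i\to\bar{\mathcal V}_i\to 0$, takes the exact direct limit to obtain a split sequence $0\to\ilim{i}\bar{\mathcal V}_i^*\to\ilim{i}\mathcal V_i^*\to\mathcal K\to 0$, and dualizes back to exhibit $\mathbb Ker\, f\cong\plim{i}\bar{\mathcal V}_i$ explicitly. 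Your proof is incomplete without an argument of this kind replacing your final paragraph.
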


\begin{proof} Let $V$ be a $K$-vector space such that there exists an injective morphism
$\mathbb M\hookrightarrow \mathcal V^*$. We can assume $\mathbb M=\mathcal V^*= \prod_{I}\mathcal K$. Given $I'\subset I$, let $f_{I'}$ be the composition of
$f$ with the obvious projection $ \prod_{I}\mathcal K\to  \prod_{I'}\mathcal K$.
Then
$$\mathbb Ker\, f=\plim{I'\subset I,\, \#I'<\infty} \mathbb Ker\, f_{I'}$$
It is sufficient to prove that $\mathbb Ker\, f_{I'}$ is proquasi-coherent, since the inverse limit of proquasi-coherent modules is proquasi-coherent. As $\# I'<\infty$ it is sufficient to prove that the kernel of every morphism $f\colon \mathbb P \to \mathcal K$ is proquasi-coherent.

If  $f\colon \mathbb P\to\mathcal K$ is the zero morphism the proposition  is obvious. Assume $f\neq 0$.
Then, $f$ is surjective. Let us write $\mathbb P= \plim{i} \mathcal V_i$ and let $v=(v_i)\in \plim{i} V_i=\mathbb P(K)$ be a vector such that $f_K((v_i))\neq 0$. Then $\mathbb P=\mathbb Ker\, f\oplus\mathcal K\cdot v$.
Let $\bar V_i:=V_i/\langle v_i\rangle$.
Let us prove that $\mathbb Ker\, f=\plim{i}\bar{\mathcal V}_i$: Let $i'$ be such that $v_{i'}\neq 0$. Consider the exact sequences
$$0\to\mathcal K \cdot v_i\to \mathcal V_i\to \bar{\mathcal V}_i\to 0, \qquad (i>i')$$
Dually, we have the exact sequences
$$0\to \bar{\mathcal V}_i^*\to \mathcal V_i^* \to \mathcal K\to 0$$
Taking the direct limit we have the split exact sequence
$$0\to \ilim{i} (\bar{\mathcal V}_i^*)\to \ilim{i} (\mathcal V_i^*) \to \mathcal K\to 0$$
Dually, we have the exact sequence
$$0\to \mathcal K\cdot v\to \mathbb P\to \plim{i} \bar{\mathcal V}_i\to 0$$
Then, $\mathbb Ker\, f\to \plim{i}\bar{\mathcal V}_i$, $(v_i)_i\mapsto (\bar v_i)_i$ is an isomorphism.

\end{proof}

\begin{corollary} Every direct summand of a proquasi-coherent module is proquasi-coherent.\end{corollary}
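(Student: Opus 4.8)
The plan is to realise a direct summand as the kernel of a projection and then invoke Proposition \ref{K1}. Let $\mathbb M$ be a proquasi-coherent $\mathcal K$-module and suppose $\mathbb M=\mathbb M'\oplus \mathbb M''$ is a decomposition as a direct sum of functors of $\mathcal K$-modules; I want to show that the summand $\mathbb M'$ is proquasi-coherent.

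First I would arrange the two hypotheses of \ref{K1} so that $\mathbb M$ plays the role of the proquasi-coherent source and the complementary summand $\mathbb M''$ plays the role of the separated target. The source hypothesis is free, since $\mathbb M$ is proquasi-coherent by assumption. For the target hypothesis I would argue that $\mathbb M''$ is separated: by Proposition \ref{P6} the proquasi-coherent module $\mathbb M$ is a dual module, and dual modules are separated, so $\mathbb M$ itself is separated; since the natural inclusion $\mathbb M''\hookrightarrow \mathbb M$ (as $0\oplus \mathbb M''$) exhibits $\mathbb M''$ as a functor of submodules of $\mathbb M$, and every functor of submodules of a separated module is separated, it follows that $\mathbb M''$ is separated.

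Next, let $\pi\colon \mathbb M\to \mathbb M''$ be the projection associated with the direct sum decomposition. Since kernels of morphisms of functors of $\mathcal K$-modules are computed objectwise and $\pi_S\colon \mathbb M'(S)\oplus \mathbb M''(S)\to \mathbb M''(S)$ has kernel $\mathbb M'(S)$ for every $S$, its kernel is precisely $\mathbb M'$, i.e.\ $\mathbb M'=\mathbb Ker\,\pi$. Applying Proposition \ref{K1} to $\pi\colon \mathbb M\to \mathbb M''$, with $\mathbb M$ proquasi-coherent and $\mathbb M''$ separated, yields that $\mathbb Ker\,\pi=\mathbb M'$ is proquasi-coherent, which is exactly the claim.

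The argument is essentially immediate once Proposition \ref{K1} is in hand; the only point requiring care — and the one place where a nontrivial hypothesis must be checked rather than merely quoted — is the verification that the complementary summand $\mathbb M''$ inherits separatedness from $\mathbb M$. I expect this to be the crux, though it reduces at once to the stability of separatedness under passage to submodules together with the fact (Proposition \ref{P6}) that a proquasi-coherent module is dual and hence separated.
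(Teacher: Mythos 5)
Your argument is correct and is precisely the intended one: the paper states this corollary immediately after Proposition \ref{K1} without proof, and the expected justification is exactly your realization of the summand as the kernel of the projection onto the complementary summand, which is separated because it is a submodule of the dual (hence separated) functor $\mathbb M$. Nothing further is needed.
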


\begin{theorem} \label{P=DQ} Let $\mathbb M$ be a functor of $\mathcal K$-modules. $\mathbb M$ is  proquasi-coherent
if and only if  $\mathbb M$ is a dual functor of $\mathcal K$-modules and it is D-proquasi-coherent. \end{theorem}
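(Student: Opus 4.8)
The plan is to prove the two implications separately, since one direction is already available in the text.

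For the forward implication (proquasi-coherent implies dual and D-proquasi-coherent) I would simply invoke Proposition \ref{P6}, which asserts exactly that a proquasi-coherent $\mathcal K$-module is a dual functor of $\mathcal K$-modules and is D-proquasi-coherent. So nothing new is required here.

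The content lies in the converse, so assume $\mathbb M$ is a dual functor, say $\mathbb M=\mathbb N^*$, and that $\mathbb M$ is D-proquasi-coherent. The first step is to realize $\mathbb M$ as a \emph{direct summand} of $\mathbb M^{**}$. I would work with the canonical morphism $\eta_{\mathbb N^*}\colon \mathbb M=\mathbb N^*\to (\mathbb N^*)^{**}=\mathbb M^{**}$ together with the dual $\eta_{\mathbb N}^*\colon \mathbb M^{**}=\mathbb N^{***}\to \mathbb N^*=\mathbb M$ of the canonical map $\eta_{\mathbb N}\colon \mathbb N\to\mathbb N^{**}$. The triangle identity $\eta_{\mathbb N}^*\circ\eta_{\mathbb N^*}=\mathrm{Id}_{\mathbb M}$ holds: for $w\in\mathbb N^*$, evaluating $\eta_{\mathbb N}^*(\eta_{\mathbb N^*}(w))$ on any $n\in\mathbb N$ gives $w(n)$, so the composite returns $w$. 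Hence the canonical morphism $\eta_{\mathbb N^*}$ is a split monomorphism (in particular injective, consistent with dual functors being separated), and $\mathbb M$ is a direct summand of $\mathbb M^{**}$.

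The second step is to identify $\mathbb M^{**}$ as an inverse limit of quasi-coherent modules. Since $\mathbb M$ is D-proquasi-coherent, Theorem \ref{P2} gives $\mathbb M^*=\ilim{i\in I}\mathcal M_i^*$, where $\{\mathcal M_i\}_{i\in I}$ is the set of all quasi-coherent quotients of $\mathbb M$. Dualizing, and using that $(-)^*$ converts direct limits into inverse limits together with $\mathcal M_i^{**}=\mathcal M_i$ (Theorem \ref{reflex}), I obtain $\mathbb M^{**}=\plim{i\in I}\mathcal M_i$, which is proquasi-coherent by definition. Finally, being a direct summand of the proquasi-coherent module $\mathbb M^{**}$, the functor $\mathbb M$ is itself proquasi-coherent by the preceding Corollary (every direct summand of a proquasi-coherent module is proquasi-coherent), and the proof is complete.

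I expect the only delicate point to be the verification that the canonical map splits, i.e. getting the triangle identity $\eta_{\mathbb N}^*\circ\eta_{\mathbb N^*}=\mathrm{Id}$ right; the limit computation is formal once D-proquasi-coherence is in hand. It is worth noting that this route avoids proving reflexivity of $\mathbb M$ directly: the direct-summand argument suffices for the statement, whereas reflexivity of dual D-proquasi-coherent functors would only emerge as a byproduct.
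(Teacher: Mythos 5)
Your proposal is correct and follows essentially the same route as the paper: the forward direction is delegated to Proposition \ref{P6}, and for the converse one writes $\mathbb M=\mathbb N^*$, uses the triangle identity to see that $\mathbb N^{***}\to\mathbb N^*$ retracts the canonical map $\mathbb M\to\mathbb M^{**}$, identifies $\mathbb M^{**}=\plim{i}\mathcal M_i$ via Theorem \ref{P2}, and concludes by the corollary on direct summands of proquasi-coherent modules. The only difference is that you spell out the splitting computation explicitly, which the paper leaves implicit.
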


\begin{proof} By Proposition \ref{P6}, we  only have  to prove the sufficiency.
 Let us write $\mathbb M=\mathbb N^*$. The dual morphism of the natural
 morphism $\mathbb N\to \mathbb N^{**}$ is a retraction of the natural morphism $\mathbb M\to \mathbb M^{**}$. Then, $\mathbb M^{**}=\mathbb M\oplus\mathbb M'$. $\mathbb M$ is proquasi-coherent, because $\mathbb M^{**}$ is proquasi-coherent, by Theorem \ref{P2}.

\end{proof}

\begin{corollary} \label{P=DQ2} A functor of $\mathcal K$-modules is proquasi-coherent if and only if it is the dual functor of $\mathcal K$-modules of a D-proquasi-coherent module.\end{corollary}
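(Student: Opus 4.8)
The plan is to read this corollary off the characterization of proquasi-coherence already established in Theorem \ref{P=DQ}, which identifies proquasi-coherent modules as precisely those functors that are simultaneously dual functors of $\mathcal K$-modules and D-proquasi-coherent. The two implications then reduce to: (i) exhibiting a D-proquasi-coherent predual for a given proquasi-coherent module, and (ii) checking that the dual of a D-proquasi-coherent module is again D-proquasi-coherent.

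For the necessity, I would \emph{not} invoke Theorem \ref{P=DQ} directly, since it only guarantees that a proquasi-coherent $\mathbb M$ is \emph{some} dual functor, without any control over the predual. Instead I would use the explicit description from Proposition \ref{P6}: writing $\mathbb M=\plim{i} \mathcal M_i$ as an inverse limit of quasi-coherent modules, one has $\mathbb M=(\ilim{i} \mathcal M_i^*)^*$. Setting $\mathbb N:=\ilim{i} \mathcal M_i^*$, each module scheme $\mathcal M_i^*$ is D-proquasi-coherent by Example \ref{3.3.3} (over a field every vector space is free), and a direct limit of D-proquasi-coherent modules is D-proquasi-coherent by Note \ref{3.3.4}. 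Hence $\mathbb N$ is D-proquasi-coherent and $\mathbb M=\mathbb N^*$ is its dual, as required.

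For the sufficiency, suppose $\mathbb M=\mathbb N^*$ with $\mathbb N$ a D-proquasi-coherent module. Then $\mathbb M$ is a dual functor of $\mathcal K$-modules by construction, and by Corollary \ref{QHom2} the fact that it is the dual of a D-proquasi-coherent module forces $\mathbb M$ itself to be D-proquasi-coherent. Theorem \ref{P=DQ} now applies and yields that $\mathbb M$ is proquasi-coherent.

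I do not anticipate a genuine obstacle here; the whole content lies in assembling the earlier results in the right order. The single point requiring care is the necessity direction: the abstract statement of Theorem \ref{P=DQ} is insufficient, and one must retain the concrete predual $\ilim{i}\mathcal M_i^*$ produced by Proposition \ref{P6} precisely so that one knows it is D-proquasi-coherent.
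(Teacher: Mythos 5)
Your proposal is correct and follows essentially the same route as the paper: for necessity, exhibit $\plim{i}\mathcal M_i=(\ilim{i}\mathcal M_i^*)^*$ with predual $\ilim{i}\mathcal M_i^*$ D-proquasi-coherent, and for sufficiency combine Corollary \ref{QHom2} with Theorem \ref{P=DQ}. The only difference is that you spell out the justification (Example \ref{3.3.3} and Note \ref{3.3.4}) that the paper leaves implicit.
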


\begin{proof} If $\mathbb M=\plim{i}\mathcal M_i$ is proquasi-coherent, then $\mathbb M=(\ilim{i} \mathcal M_i^*)^*$.
$\ilim{i} \mathcal M_i^*$ is D-proquasi-coherent and $\mathbb M=(\ilim{i} \mathcal M_i^*)^*$.
If $\mathbb M'$ is D-proquasi-coherent,
then
$\mathbb M'^*$ is D-proquasi-coherent, by Corollary \ref{QHom2}.
By Theorem \ref{P=DQ}, $\mathbb M'^*$ is proquasi-coherent.\end{proof}

\begin{proposition} \label{main} Let $M$ be an $R$-module. Then,
$$\mathbb Hom_{\mathcal R}(\prod_I\mathcal R,\mathcal M)=\oplus_I \mathbb Hom_{\mathcal R}(\mathcal R,\mathcal M)=\oplus_I\mathcal M$$

\end{proposition}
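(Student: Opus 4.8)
The plan is to recognize the functor $\prod_I\mathcal R$ as the dual of a free quasi-coherent module and then reduce everything to Proposition \ref{prop4}. Concretely, set $N:=\oplus_I R$, so that its associated quasi-coherent module is $\mathcal N=\oplus_I\mathcal R$. The first thing I would verify is that the dual $\mathcal N^*$ is precisely $\prod_I\mathcal R$: for every commutative $R$-algebra $S$ one has, naturally in $S$,
$$\mathcal N^*(S)={\rm Hom}_S(N\otimes_R S,S)={\rm Hom}_R(N,S)={\rm Hom}_R(\oplus_I R,S)=\prod_I S=(\textstyle\prod_I\mathcal R)(S).$$

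With this identification, Proposition \ref{prop4}, applied to the pair of $R$-modules $N$ and $M$, gives at once the functorial equality
$$\mathbb Hom_{\mathcal R}(\textstyle\prod_I\mathcal R,\mathcal M)=\mathbb Hom_{\mathcal R}(\mathcal N^*,\mathcal M)=\mathcal N\otimes_{\mathcal R}\mathcal M.$$
It then remains to identify $\mathcal N\otimes_{\mathcal R}\mathcal M$ with $\oplus_I\mathcal M$. By the properties of quasi-coherent modules recalled just before Proposition \ref{prop4}, $\mathcal N\otimes_{\mathcal R}\mathcal M$ is the quasi-coherent module associated with $N\otimes_R M=(\oplus_I R)\otimes_R M=\oplus_I M$, and since tensoring commutes with direct sums this functor is $\oplus_I\mathcal M$. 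The second equality in the statement is then formal: taking $N=R$ in the same proposition, and using $\mathcal R^*=\mathcal R$, yields $\mathbb Hom_{\mathcal R}(\mathcal R,\mathcal M)=\mathcal R\otimes_{\mathcal R}\mathcal M=\mathcal M$, whence $\oplus_I\mathbb Hom_{\mathcal R}(\mathcal R,\mathcal M)=\oplus_I\mathcal M$.

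Along this route the only genuinely delicate point is the bookkeeping that converts the product $\prod_I$ into the sum $\oplus_I$, i.e. the identification $\mathcal N^*=\prod_I\mathcal R$ for $N=\oplus_I R$; after that the proof is a single appeal to the already-established Proposition \ref{prop4}. If one wanted an argument independent of \ref{prop4}, I would instead generalize Example \ref{ex1} directly: given an $\mathcal R$-linear $w\colon\prod_I\mathcal R\to\mathcal M$, evaluate it on the universal tuple $(x_i)_{i\in I}$ in the polynomial ring $R[x_i:i\in I]$, observe that $w_{R[x_i]}((x_i))$ lies in $M\otimes_R R[x_J]$ for some finite $J\subset I$ (because tensoring commutes with the filtered colimit of the subalgebras $R[x_J]$), use functoriality to conclude that $w$ is determined by this polynomial $\phi$, and then scale by a fresh variable $x$ so that $\mathcal R$-linearity forces $\phi(xx_i)=x\phi(x_i)$; comparing powers of $x$ makes $\phi$ homogeneous of degree one, $\phi=\sum_k m_k x_{i_k}$, exhibiting $w$ as the finitely supported family $(m_i)\in\oplus_I M$. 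In that alternative I expect the main obstacle to be justifying the degree-one homogeneity cleanly and handling arbitrary (possibly uncountable) index sets $I$, both of which are exactly what the filtered-colimit remark keeps under control.
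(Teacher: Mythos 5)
Your proposal is correct and follows the paper's own proof exactly: the paper also identifies $\prod_I\mathcal R=(\oplus_I\mathcal R)^*$ and applies Proposition \ref{prop4} to obtain $(\oplus_I\mathcal R)\otimes_{\mathcal R}\mathcal M=\oplus_I\mathcal M$. The extra verification of $\mathcal N^*=\prod_I\mathcal R$ and the sketched alternative via Example \ref{ex1} are fine but not needed.
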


\begin{proof} $\mathbb Hom_{\mathcal R}(\prod_I\mathcal R,\mathcal M)=\mathbb Hom_{\mathcal R}((\oplus_I\mathcal R)^*,\mathcal M)\overset{\text{\ref{1.8Amel}}}=(\oplus_I\mathcal R)\otimes \mathcal M=\oplus_I\mathcal M$.\end{proof}

\begin{proposition} \label{Fcansado} Let $I$ be a totally ordered set and  $\{f_{ij}\colon M_i\to M_j\}_{i\geq j\in I}$ an inverse system of $K$-modules. Then, $\plim{i} \mathcal M_i$ is reflexive.
\end{proposition}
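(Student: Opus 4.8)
The plan is to reduce the statement, by duality, to the assertion that a single natural morphism is an isomorphism, and then to prove that isomorphism by a Mittag--Leffler argument in which the total order is used essentially. Write $\mathbb M=\plim{i}\mathcal M_i$. Since dualizing turns direct limits into inverse limits, one always has $(\ilim{i}\mathcal M_i^*)^*=\plim{i}\mathcal M_i^{**}=\plim{i}\mathcal M_i=\mathbb M$, using Theorem \ref{reflex}. Consider then the natural morphism $\psi\colon\ilim{i}\mathcal M_i^*\to(\plim{i}\mathcal M_i)^*=\mathbb M^*$; for $\mathbb N:=\ilim{i}\mathcal M_i^*$ this is exactly the canonical map $\mathbb N\to\mathbb N^{**}$ (note $\mathbb N^*=\mathbb M$ by the computation just made). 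If $\psi$ is an isomorphism, dualizing it gives an isomorphism $\psi^*\colon\mathbb M^{**}\to\mathbb M$, and the triangle identity for the dual functor, $\psi^*\circ(\mathbb M\to\mathbb M^{**})=\mathrm{Id}_{\mathbb M}$, shows that $\mathbb M$ is reflexive. Thus everything reduces to proving that $\psi$ is an isomorphism, equivalently that $\mathbb N$ is reflexive; note that $\mathbb N$ is at least D-proquasi-coherent, being a direct limit of the D-proquasi-coherent module schemes $\mathcal M_i^*$ (Example \ref{3.3.3} and Note \ref{3.3.4}).

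Next I would analyze $\psi$ pointwise, which is where the total order enters. Evaluate on a commutative $K$-algebra $S$: by Corollary \ref{adj2}, $\mathbb M^*(S)=\Hom_{\mathcal K}(\mathbb M,\mathcal S)$, while $(\ilim{i}\mathcal M_i^*)(S)=\ilim{i}\Hom_K(M_i,S)$, and $\psi_S$ sends the class of $w_i$ to $w_i\circ\pi_i$, where $\pi_i\colon\mathbb M\to\mathcal M_i$ is the projection. Since $\mathbb M$ is proquasi-coherent hence D-proquasi-coherent (Proposition \ref{P6}), Theorem \ref{W1} lets us regard $\mathbb M^*(S)\subseteq\Hom_K(M_\infty,S)$ via $w\mapsto w_K$, where $M_\infty:=\mathbb M(K)=\plim{i}M_i$. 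In these terms surjectivity of $\psi_S$ asserts that every morphism of functors $\mathbb M\to\mathcal S$ factors through a single projection $\pi_{i_0}$, while injectivity asserts that if $w_i$ kills $\Ima(\pi_i\colon M_\infty\to M_i)$ then $w_i$ already kills $\Ima(f_{i'i}\colon M_{i'}\to M_i)$ for some $i'\ge i$. The total order makes both tractable: for fixed $i$ the subspaces $\{\Ima f_{i'i}\}_{i'\ge i}$ of $M_i$ form a chain, with $\Ima\pi_i\subseteq\bigcap_{i'\ge i}\Ima f_{i'i}$, so that the required statements are of Mittag--Leffler type about this chain.

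For the core step I would first pass to a cofinal well-ordered subset of $I$, legitimate since inverse limits (and the direct limits of duals) are unchanged under cofinal restriction and every totally ordered set admits a cofinal well-ordered subset of order type its cofinality; thus one may assume $I$ is well ordered. The factorization (surjectivity) is then proved by generalizing the generic-point computation of Example \ref{ex1}: one tests a morphism $w\colon\mathbb M\to\mathcal S$ against a universal element built from the tower $\{M_\alpha\}$ over a polynomial-type $K$-algebra and exploits $S$-linearity and functoriality, with the well-order governing the transfinite bookkeeping and the vanishing of the relevant image chains. I expect this surjectivity to be the main obstacle: for infinite-dimensional $M_\alpha$ the chains $\{\Ima f_{\alpha'\alpha}\}$ need not stabilize, so one cannot naively factor a mere linear map through a finite stage, and the whole force of the hypothesis lies in the fact that functoriality together with the chain structure of the images forces the factorization anyway. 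This is precisely the finite-support phenomenon $\Hom_{\mathcal K}(\prod^{\mathbb N}\mathcal K,\mathcal K)=\oplus^{\mathbb N}K$ of Example \ref{ex1} pushed through the well-ordered tower, and it is the only place where ``totally ordered'' (rather than an arbitrary directed index, for which the analogous reflexivity is left open) is genuinely used.
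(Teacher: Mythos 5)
Your opening reduction is already in trouble: you propose to prove that the canonical map $\psi\colon \ilim{i}\mathcal M_i^*\to(\plim{i}\mathcal M_i)^*$ is an isomorphism, but this is false for a general totally ordered inverse system. Take $I=\mathbb N$, $M_n=\oplus_{m\geq n}K$ and, for $n\geq n'$, let $f_{nn'}$ be the inclusion $\oplus_{m\geq n}K\hookrightarrow\oplus_{m\geq n'}K$. Then $\plim{n}\mathcal M_n=0$, so $\mathbb M$ is (trivially) reflexive and $\mathbb M^*=0$; but $(1,1,1,\ldots)\in M_1^*=\prod_{\mathbb N}K$ restricts to a nonzero element of every $M_n^*$, hence defines a nonzero element of $\ilim{n}\mathcal M_n^*$, so $\psi$ is not injective. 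The same example refutes the precise ``Mittag--Leffler'' form you give for injectivity: $w_1=(1,1,\ldots)$ kills $\Ima \pi_1=0$ but kills no $\Ima f_{i'1}=\oplus_{m\geq i'}K$. The paper sidesteps this by first replacing each $M_i$ by the image $M_i'$ of the limit, which changes $\ilim{i}\mathcal M_i^*$ but not $\plim{i}\mathcal M_i$; your plan never performs this replacement, so the statement you reduce everything to is simply not true.

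The second, independent, gap is that the heart of the argument --- the factorization of an arbitrary $w\colon\mathbb M\to\mathcal S$ through a single stage --- is only announced (``generalizing the generic-point computation of Example \ref{ex1}'' with ``transfinite bookkeeping''), not carried out; it is not at all clear that a usable universal element exists for a general transfinite tower, and I would not expect a direct transfinite induction to survive limit ordinals. The paper's actual mechanism is different and is worth internalizing: if some stage is infinite dimensional, one embeds $\prod_{\mathbb N}\mathcal K\hookrightarrow\mathbb M$, chooses $i_1<i_2<\cdots$ with $\mathcal K^n\hookrightarrow\mathcal M_{i_n}$ injective, and uses Proposition \ref{main} (every morphism $\prod_{\mathbb N}\mathcal K\to\mathcal M_j$ factors through a finite projection) to rule out an upper bound $j$ for all the $i_n$; since $I$ is totally ordered, the chain $\{i_n\}$ is therefore cofinal and $\mathbb M=\plim{n}\mathcal M_{i_n}$ is a countable limit. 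Passing to the images $M'_{i_n}$ and splitting the resulting surjections (we are over a field) identifies $\mathbb M$ with a product $\prod_n\mathcal H_n$ of quasi-coherent modules, and Lemma \ref{uf} then gives reflexivity. Your proposal contains neither the reduction to a countable cofinal chain nor the product decomposition, and these are the two ideas that make the proof work.
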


\begin{proof}
$\plim{i} \mathcal M_i$ is a direct limit of submodule schemes  $\mathcal V_j^*$, by \ref{P2} and \ref{P=DQ2}.
If all the vector spaces $V_j$ are finite dimensional then
$\plim{i} \mathcal M_i$ is quasi-coherent, then it is reflexive. In other case,
there exists an injective morphism  $f\colon \prod_{\mathbb N}\mathcal K\hookrightarrow \plim{i} \mathcal M_i$. Let $\pi_j\colon \plim{i} \mathcal M_i\to \mathcal M_j$ be the natural morphisms. Let $g_r\colon \mathcal K^r\hookrightarrow \prod_{\mathbb N}\mathcal K$ be
defined by $g_r(\lambda_1,\cdots,\lambda_r):=(\lambda_1,\cdots,\lambda_r,0,\cdots,0,\cdots)$.
Let  $i_1\in I$ be such that $\pi_{i_1}\circ f\circ g_1$ is injective. Recursively,
let $i_n>i_{n-1}$ be such that $\pi_{i_n}\circ f\circ g_n$ is injective.
If there exists a $j>i_n$ for all $n$, the composite morphism $\oplus_{\mathbb N}\mathcal K\subset\prod_{\mathbb N}\mathcal K \to \mathcal M_j$ is injective,
and by Proposition \ref{main} the morphism  $\prod_{\mathbb N}\mathcal K \to \mathcal M_j$ factors through the projection onto a $\mathcal K^r$, which is contradictory. In conclusion, $\plim{i} \mathcal M_i=\plim{n\in\mathbb N} \mathcal M_{i_n}$.

 Let $\mathcal M'_{i_r}$ be the image of  $\plim{n} \mathcal M_{i_n}$ in $\mathcal M_{i_r}$. Then, $\plim{n} \mathcal M'_{i_n}=\plim{n} \mathcal M_{i_n}$.
 Let $H_n:=\Ker [M'_{i_n}\to M'_{i_{n-1}}]$. Then, $\plim{n} \mathcal M_{i_n}\simeq\prod_n \mathcal H_n$. By Lemma \ref{uf}, $\plim{n} \mathcal M_{i_n}$ is reflexive.

\end{proof}

\begin{note} We do not know if every  proquasi-coherent functor of $\mathcal K$-modules is  reflexive.
\end{note}

\begin{proposition} \label{P4} If $\mathbb P,\mathbb P'$ are proquasi-coherent $\mathcal K$-modules, then  $\mathbb Hom_{\mathcal K}(\mathbb P, \mathbb P')$ is proquasi-coherent. In particular, $\mathbb P^*$ and $(\mathbb P\otimes \mathbb P')^*$ are proquasi-coherent.

\end{proposition}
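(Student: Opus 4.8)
The plan is to reduce the statement to the computation $\mathbb Hom_{\mathcal K}(\mathcal M^*,\mathcal N)=\mathcal M\otimes_{\mathcal K}\mathcal N$ of Proposition \ref{1.8Amel}, by pushing the defining inverse limits out of both arguments of the internal $\mathbb Hom$ and exploiting that $\mathbb Hom$ carries colimits in the first variable and limits in the second variable to inverse limits. Recall from the proof of Proposition \ref{K1} that an inverse limit of proquasi-coherent modules is again proquasi-coherent, so it will suffice to exhibit $\mathbb Hom_{\mathcal K}(\mathbb P,\mathbb P')$ as such an inverse limit with quasi-coherent terms.

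First I would deal with the second variable. Writing $\mathbb P'=\plim{j}\mathcal N_j$ as an inverse limit of quasi-coherent modules, one has $\mathbb Hom_{\mathcal K}(\mathbb P,\plim{j}\mathcal N_j)=\plim{j}\mathbb Hom_{\mathcal K}(\mathbb P,\mathcal N_j)$, which is checked objectwise: $(\plim{j}\mathcal N_j)_{|S}=\plim{j}(\mathcal N_j)_{|S}$ and a morphism into an inverse limit is the same as a compatible family of morphisms, so $\Hom$ into the limit is the limit of the $\Hom$'s. Since inverse limits of proquasi-coherent modules are proquasi-coherent, this reduces the problem to showing that $\mathbb Hom_{\mathcal K}(\mathbb P,\mathcal N)$ is proquasi-coherent for a single quasi-coherent $\mathcal N$.

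For that I would use the first variable. By Proposition \ref{P6} (concretely, writing $\mathbb P$ as a dual $\mathbb Q^*$ of a $D$-proquasi-coherent module $\mathbb Q$ and applying Theorem \ref{P2} to $\mathbb Q$), the proquasi-coherent module $\mathbb P$ is a direct limit of module schemes, say $\mathbb P=\ilim{i}\mathcal L_i^*$. Because restriction commutes with direct limits ($(\ilim{i}\mathbb M_i)(S)=\ilim{i}\mathbb M_i(S)$) and a morphism out of a colimit is a compatible family, one gets $\mathbb Hom_{\mathcal K}(\ilim{i}\mathcal L_i^*,\mathcal N)=\plim{i}\mathbb Hom_{\mathcal K}(\mathcal L_i^*,\mathcal N)$. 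By Proposition \ref{1.8Amel} each term equals the quasi-coherent module $\mathcal L_i\otimes_{\mathcal K}\mathcal N$, so $\mathbb Hom_{\mathcal K}(\mathbb P,\mathcal N)=\plim{i}(\mathcal L_i\otimes_{\mathcal K}\mathcal N)$ is an inverse limit of quasi-coherent modules, i.e. proquasi-coherent. Combining with the previous paragraph finishes the general claim.

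Finally, the two ``in particular'' assertions follow by specialization. Taking $\mathbb P'=\mathcal K$ shows $\mathbb P^*=\mathbb Hom_{\mathcal K}(\mathbb P,\mathcal K)$ is proquasi-coherent; in particular $\mathbb P'^*$ is proquasi-coherent as well. For the tensor product I would use the adjunction $(\mathbb P\otimes\mathbb P')^*=\mathbb Hom_{\mathcal K}(\mathbb P\otimes\mathbb P',\mathcal K)=\mathbb Hom_{\mathcal K}(\mathbb P,\mathbb Hom_{\mathcal K}(\mathbb P',\mathcal K))=\mathbb Hom_{\mathcal K}(\mathbb P,\mathbb P'^*)$, which reduces to the main statement once $\mathbb P'^*$ is known to be proquasi-coherent. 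The only genuine content beyond formal manipulation is the interchange of $\mathbb Hom$ with the colimit in the first variable, resting on the presentation of $\mathbb P$ as a direct limit of module schemes from Proposition \ref{P6}; the second-variable interchange and the adjunction are purely formal, so I expect no serious obstacle.
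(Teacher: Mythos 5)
Your proposal is correct and follows essentially the same route as the paper: write $\mathbb P=\ilim{i}\mathcal L_i^*$ as a direct limit of module schemes via Proposition \ref{P6}, write $\mathbb P'$ as an inverse limit of quasi-coherent modules, pull both limits out of $\mathbb Hom$, and apply Proposition \ref{1.8Amel} to identify each term $\mathbb Hom_{\mathcal K}(\mathcal L_i^*,\mathcal N_j)$ with the quasi-coherent module $\mathcal L_i\otimes_{\mathcal K}\mathcal N_j$. The only cosmetic difference is that you perform the two interchanges sequentially (using that an inverse limit of proquasi-coherent modules is proquasi-coherent, a fact the paper also uses in the proof of Proposition \ref{K1}) whereas the paper collapses them into a single $\plim{i,j}$.
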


\begin{proof} Let us write $\mathbb P=\ilim{i} \mathcal V_i^*$ and $\mathbb P'=\plim{j} \mathcal V'_j$.
Then,

$$\mathbb Hom_{\mathcal K}(\mathbb P, \mathbb P')=\mathbb Hom_{\mathcal K}(\ilim{i} \mathcal V_i^*, \plim{j} \mathcal V'_j)=\plim{i,j} \mathbb Hom_{\mathcal K}( \mathcal V_i^*,  \mathcal V'_j)=\plim{i,j} (\mathcal V_i\otimes\mathcal V'_j)$$
Hence, $\mathbb Hom(\mathbb P, \mathbb P')$ is proquasi-coherent.

\end{proof}

\begin{proposition} \label{P9} Let $\mathbb A$ be a functor of  $\mathcal K$-algebras  and a D-proquasi-coherent module,
and let $\mathbb P,\mathbb P'$ be proquasi-coherent $\mathcal K$-modules and $\mathbb A$-modules.
Then, a morphism of $\mathcal K$-modules, $f\colon \mathbb P\to\mathbb P'$, is a morphism
of $\mathbb A$-modules if and only if $f_K\colon \mathbb P(K)\to\mathbb P'(K)$ is a morphism of $\mathbb A(K)$-modules.

\end{proposition}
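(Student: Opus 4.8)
The plan is to reduce the statement to a condition that can be checked on the category of vector spaces, using the already-established D-proquasi-coherence machinery. As in Proposition \ref{P10}(2), the assertion that $f$ is a morphism of $\mathbb A$-modules is equivalent to the vanishing of the associated morphism $F\colon \mathbb A\otimes_{\mathcal K}\mathbb P\to\mathbb P'$, $F(a\otimes m):=f(am)-af(m)$, while $f_K$ being a morphism of $\mathbb A(K)$-modules is equivalent to the vanishing of $F_K\colon \mathbb A(K)\otimes_K\mathbb P(K)\to\mathbb P'(K)$. Thus it suffices to show that the map $\Hom_{\mathcal K}(\mathbb A\otimes_{\mathcal K}\mathbb P,\mathbb P')\to \Hom_K(\mathbb A(K)\otimes_K\mathbb P(K),\mathbb P'(K))$, $G\mapsto G_K$, is injective, since then $F=0$ if and only if $F_K=0$.

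To establish this injectivity I would run the same chain of inclusions as in Proposition \ref{P10}, but now invoking the proquasi-coherence hypotheses to control the $\mathbb Hom$ functors. First, by adjunction,
$$\Hom_{\mathcal K}(\mathbb A\otimes_{\mathcal K}\mathbb P,\mathbb P')=\Hom_{\mathcal K}(\mathbb A,\mathbb Hom_{\mathcal K}(\mathbb P,\mathbb P')).$$
Now $\mathbb Hom_{\mathcal K}(\mathbb P,\mathbb P')$ is proquasi-coherent by Proposition \ref{P4}, hence D-proquasi-coherent and in particular separated (being a dual functor, by Proposition \ref{P6}). Since $\mathbb A$ is D-proquasi-coherent, Theorem \ref{QHom} applies and yields the inclusion
$$\Hom_{\mathcal K}(\mathbb A,\mathbb Hom_{\mathcal K}(\mathbb P,\mathbb P'))\hookrightarrow \Hom_K(\mathbb A(K),\mathbb Hom_{\mathcal K}(\mathbb P,\mathbb P')(K)).$$
Evaluating the proquasi-coherent module $\mathbb Hom_{\mathcal K}(\mathbb P,\mathbb P')$ at $K$ and comparing with $\Hom_K(\mathbb P(K),\mathbb P'(K))$ gives a further inclusion $\mathbb Hom_{\mathcal K}(\mathbb P,\mathbb P')(K)\hookrightarrow \Hom_K(\mathbb P(K),\mathbb P'(K))$, which follows from D-proquasi-coherence of $\mathbb P$ via Theorem \ref{QHom} (since $\mathbb P'$ is separated). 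Composing the inclusions and using $\Hom_K(\mathbb A(K),\Hom_K(\mathbb P(K),\mathbb P'(K)))=\Hom_K(\mathbb A(K)\otimes_K\mathbb P(K),\mathbb P'(K))$ produces the desired injection, and tracing a morphism through the diagram confirms it is compatible with evaluation at $K$.

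I expect the main obstacle to be bookkeeping rather than a genuine difficulty: one must verify that each intermediate inclusion is the restriction-to-$K$ map in a compatible sense, so that the overall composite really is $G\mapsto G_K$ and an element maps to zero in the target precisely when its evaluation $F_K$ vanishes. The key supporting facts are that $\mathbb P,\mathbb P'$ proquasi-coherent forces $\mathbb Hom_{\mathcal K}(\mathbb P,\mathbb P')$ to be proquasi-coherent (\ref{P4}), hence separated, so that Theorem \ref{QHom} is applicable with the separated module $\mathbb P'$ (and with $\mathbb Hom_{\mathcal K}(\mathbb P,\mathbb P')$ as the separated target in the outer step). With those inclusions in place, the equivalence $F=0\iff F_K=0$ is immediate, which is exactly the claimed statement.
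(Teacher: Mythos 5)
Your proposal is correct and follows essentially the same route as the paper: the paper's proof of this proposition is literally ``Proceed as in the proof of Proposition \ref{P10} (2)'', and your argument is precisely that adaptation, replacing the quasi-coherent modules by proquasi-coherent ones and justifying the two inclusions via Proposition \ref{P4} (so that $\mathbb Hom_{\mathcal K}(\mathbb P,\mathbb P')$ is proquasi-coherent, hence a separated dual functor) together with two applications of Theorem \ref{QHom}. No gaps.
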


\begin{proof} Proceed  as in the proof of Proposition \ref{P10} (2).

\end{proof}

\section{A family $\mathfrak F$ of reflexive functors of $\mathcal R$-modules}

\label{F}

Consider $\prod_{j\in J} \mathcal R$ as a functor of $\mathcal R$-algebras ($(\lambda_j)_j\cdot (\mu_j)_j:=(\lambda_j\cdot \mu_j)_j$). If $\{\mathbb M_j\}_{j\in J}$ is a set of $\mathcal R$-modules, then $\oplus_{j\in J} \mathbb M_j$ and
$\prod_{j\in J}\mathbb M_j$ are naturally functors of $\prod_{j\in J} \mathcal R$-modules.

\begin{lemma} \label{uf} Let $\mathbb M$ be a dual functor of $\mathcal R$-modules and a functor of $\prod_{j\in J} \mathcal R$-modules. If there exist a
set of reflexive functors of $\mathcal R$-modules, $\{\mathbb M_j\}_{j\in J}$, and inclusions of $\prod_{j\in J} \mathcal R$-modules
$$\oplus_{j\in J} \mathbb M_j\subseteq \mathbb M\subseteq \prod_{j\in J}\mathbb M_j$$
(where $\oplus_{j\in J} \mathbb M_j \subseteq \prod_{j\in J}\mathbb M_j$ is the obvious inclusion)
then, \begin{enumerate} \item $\mathbb M$ is a reflexive functor of $\mathcal R$-modules.

\item For every $R$-module $N$ we have
$$\oplus_{j\in J} \mathbb Hom_{\mathcal R}(\mathbb M_j,\mathcal N)\subseteq \mathbb Hom_{\mathcal R}(\mathbb M,\mathcal N)\subseteq \prod_{j\in J}\mathbb Hom_{\mathcal R}(\mathbb M_j,\mathcal N)$$
\end{enumerate}

\end{lemma}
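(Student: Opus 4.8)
The plan is to prove both statements by exploiting the hypothesis that $\mathbb{M}$ is a dual functor sandwiched between $\oplus_{j}\mathbb{M}_j$ and $\prod_{j}\mathbb{M}_j$ as $\prod_j\mathcal{R}$-modules, and to reduce everything to the reflexivity of the individual $\mathbb{M}_j$. For part (1), the key observation is that the $\prod_j\mathcal{R}$-module structure lets me decompose morphisms componentwise. Since $\mathbb{M}$ is a dual functor, it is separated (by the Example following the definition of separated), so the natural map $\mathbb{M}\to\mathbb{M}^{**}$ is injective; the whole game is to produce a retraction, or equivalently to show surjectivity. First I would compute $\mathbb{M}^*$ using the inclusions: dualizing the chain $\oplus_j\mathbb{M}_j\subseteq\mathbb{M}\subseteq\prod_j\mathbb{M}_j$ should yield a corresponding chain relating $\mathbb{M}^*$ to the $\mathbb{M}_j^*$, where I expect $(\oplus_j\mathbb{M}_j)^*=\prod_j\mathbb{M}_j^*$ and $(\prod_j\mathbb{M}_j)^*=\oplus_j\mathbb{M}_j^*$ as functors of $\prod_j\mathcal{R}$-modules, using Proposition \ref{main} in the form $\mathbb{Hom}_{\mathcal{R}}(\prod_I\mathcal{R},\mathcal{M})=\oplus_I\mathcal{M}$.

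Concretely, I would argue that any $w\in\mathbb{M}^*$ is determined by, and in fact splits into, its components $w_j\in\mathbb{M}_j^*$ via the inclusions $\mathbb{M}_j\hookrightarrow\mathbb{M}$ coming from $\oplus_j\mathbb{M}_j\subseteq\mathbb{M}$; the $\prod_j\mathcal{R}$-linearity forces each such $w$ to have support that behaves like a finite-support family, so that $\mathbb{M}^*$ sits in the chain $\oplus_j\mathbb{M}_j^*\subseteq\mathbb{M}^*\subseteq\prod_j\mathbb{M}_j^*$, again as $\prod_j\mathcal{R}$-modules. The idempotents $e_j=(0,\dots,1,\dots,0)\in\prod_j\mathcal{R}$ are the main tool here: multiplication by $e_j$ projects onto the $j$-th factor, and applying $w$ to $e_j\cdot m$ isolates the $j$-th component. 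Once $\mathbb{M}^*$ is shown to satisfy the same sandwich hypothesis with the reflexive family $\{\mathbb{M}_j^*\}_j$ (note $\mathbb{M}^*$ is itself a dual functor), I can apply the argument a second time to $\mathbb{M}^*$ in place of $\mathbb{M}$ and conclude that $\mathbb{M}^{**}$ is likewise sandwiched between $\oplus_j\mathbb{M}_j^{**}=\oplus_j\mathbb{M}_j$ and $\prod_j\mathbb{M}_j^{**}=\prod_j\mathbb{M}_j$, using reflexivity of each $\mathbb{M}_j$. Comparing the sandwich for $\mathbb{M}$ with that for $\mathbb{M}^{**}$ and tracking the natural injection $\mathbb{M}\hookrightarrow\mathbb{M}^{**}$ through both, I expect the two to be forced to coincide, giving $\mathbb{M}=\mathbb{M}^{**}$.

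For part (2), I would apply $\mathbb{Hom}_{\mathcal{R}}(-,\mathcal{N})$ to the defining chain $\oplus_j\mathbb{M}_j\subseteq\mathbb{M}\subseteq\prod_j\mathbb{M}_j$. The functor $\mathbb{Hom}_{\mathcal{R}}(-,\mathcal{N})$ is left exact and reverses inclusions, so the outer terms become $\mathbb{Hom}_{\mathcal{R}}(\prod_j\mathbb{M}_j,\mathcal{N})$ and $\mathbb{Hom}_{\mathcal{R}}(\oplus_j\mathbb{M}_j,\mathcal{N})$. The latter is $\prod_j\mathbb{Hom}_{\mathcal{R}}(\mathbb{M}_j,\mathcal{N})$ by the universal property of the direct sum. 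The crux is identifying $\mathbb{Hom}_{\mathcal{R}}(\prod_j\mathbb{M}_j,\mathcal{N})$ with $\oplus_j\mathbb{Hom}_{\mathcal{R}}(\mathbb{M}_j,\mathcal{N})$, which is the $\prod_j\mathcal{R}$-linearity phenomenon of Proposition \ref{main} generalized from $\mathbb{M}_j=\mathcal{R}$ to reflexive $\mathbb{M}_j$; this should follow because a $\prod_j\mathcal{R}$-linear morphism out of $\prod_j\mathbb{M}_j$ into a quasi-coherent $\mathcal{N}$ (which carries the trivial $\prod_j\mathcal{R}$-action away from a single factor, tested over polynomial algebras as in Example \ref{ex1}) must factor through a finite subsum.

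The main obstacle I anticipate is the step establishing that $\mathbb{M}^*$ inherits the sandwich structure with the same index set and the reflexive family $\{\mathbb{M}_j^*\}_j$, since this requires showing both that $\mathbb{M}^*$ is again a $\prod_j\mathcal{R}$-module and that dualization interchanges $\oplus$ and $\prod$ compatibly with the inclusions. The finiteness-of-support argument—that $\prod_j\mathcal{R}$-linearity forces a morphism into a single quasi-coherent factor to have finite support—is exactly the content underlying Proposition \ref{main}, so I expect the honest work to lie in reducing the general reflexive $\mathbb{M}_j$ back to that proposition, presumably by writing each $\mathbb{M}_j$ as a suitable (co)limit and commuting $\mathbb{Hom}$ past it.
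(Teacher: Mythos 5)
Your mechanism for the sandwich computations is the same as the paper's: restrict $w\in\mathbb Hom_{\mathcal R}(\mathbb M,\mathcal N)$ to the factors $\mathbb M_j$, and use the $\prod_{j\in J}\mathcal R$-linearity together with Proposition \ref{main} (applied to $W\colon\prod_j\mathcal R\to\mathcal N$, $W((\lambda_j)_j):=w((\lambda_jm_j)_j)$) to see that on each element only finitely many components survive and $w(m)=\sum_i w_i(m_i)$. That gives part (2) and, iterating with $\mathcal N=\mathcal R$, the chains $\oplus_j\mathbb M_j^*\subseteq\mathbb M^*\subseteq\prod_j\mathbb M_j^*$ and $\oplus_j\mathbb M_j\subseteq\mathbb M^{**}\subseteq\prod_j\mathbb M_j$. (Two small cautions: you do not need, and should not claim, the equalities $(\prod_j\mathbb M_j)^*=\oplus_j\mathbb M_j^*$ and $\mathbb Hom_{\mathcal R}(\prod_j\mathbb M_j,\mathcal N)=\oplus_j\mathbb Hom_{\mathcal R}(\mathbb M_j,\mathcal N)$ --- the lemma itself only yields sandwiches, not equalities, and the injectivity of the restriction map $\mathbb Hom_{\mathcal R}(\mathbb M,\mathcal N)\to\mathbb Hom_{\mathcal R}(\oplus_j\mathbb M_j,\mathcal N)$ is exactly what the finite-support argument proves; it does not come for free from left-exactness.)

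The genuine gap is your last step of part (1): ``comparing the sandwich for $\mathbb M$ with that for $\mathbb M^{**}$ \dots\ I expect the two to be forced to coincide.'' Sandwiching does not force coincidence: both $\oplus_j\mathcal R$ and $\prod_j\mathcal R$ sit between $\oplus_j\mathcal R$ and $\prod_j\mathcal R$, so two distinct modules can occupy the same sandwich, and the injection $\mathbb M\hookrightarrow\mathbb M^{**}$ alone gives nothing more. The paper closes this by actually using the three hypotheses you leave idle at the end. First, since $\mathbb M=\mathbb M'^*$ is a dual functor, the natural morphism $\mathbb M\to\mathbb M^{**}$ has a retraction (dualize $\mathbb M'\to\mathbb M'^{**}$), so $\mathbb M^{**}=\mathbb M\oplus\mathbb M''$. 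Second, from $\oplus_j\mathbb M_j\subseteq\mathbb M\subseteq\mathbb M^{**}\subseteq\prod_j\mathbb M_j$ one gets $(\mathbb M^{**})^*\subseteq\mathbb M^*$, and since $\mathbb M^*\to\mathbb M^{***}$ is a section this inclusion is also an epimorphism, whence $\mathbb M^*=\mathbb M^{***}$ and therefore $(\mathbb M'')^*=0$. Third, reflexivity of the $\mathbb M_j$ makes $\prod_j\mathbb M_j$ separated, so the submodule $\mathbb M''\subseteq\prod_j\mathbb M_j$ with zero dual must itself be zero. Without some version of this splitting-plus-separatedness argument your proof does not conclude.
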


\begin{proof} Given $w\in \mathbb Hom_{\mathcal R}(\mathbb M,\mathcal N)$, let $w_j:=w_{|\mathbb M_j}$, for all
$j\in J$. Given $m=(m_j)_j\in \mathbb M\subset \prod_j\mathbb M_j$, then $w_j(m_j)=0$ for all $j\in J$, except for a finite subset $I\subset J$, and $w(m)=\sum_{i\in I}w_{i}(m_{i})$:
Let
$W\colon \prod_j\mathcal R\to \mathcal N$ be defined by $W((\lambda_j)_j):=w(( \lambda_jm_j)_j)$. By Proposition \ref{main},
there exists a finite subset $I\subset J$ such that
$W((\lambda_j)_j)=W((\lambda_{i})_{i\in I})$. Hence,
$w_j(m_j)=w(m_j)=0$ for all $j\in J\backslash I$, and $w(m)=w((m_j)_j)=w( (m_i)_{i\in I})=\sum_{i\in I}w_{i}(m_{i})$.

Then, $\oplus_{j\in J} \mathbb Hom_{\mathcal R}(\mathbb M_j,\mathcal N)\subseteq \mathbb Hom_{\mathcal R}(\mathbb M,\mathcal N)\subseteq \prod_{j\in J}\mathbb Hom_{\mathcal R}(\mathbb M_j,\mathcal N)$.

In particular, we have
$$\oplus_j \mathbb M_j^*\subseteq \mathbb M^*\subseteq\prod_j \mathbb M_j^*$$

$\mathbb M^*$ is a $\prod_j \mathcal R$-module and again
$$\oplus_j \mathbb M_j\subseteq \mathbb M^{**}\subseteq \prod_j\mathbb M_j$$
We have $\oplus_j\mathbb M_j\subseteq \mathbb M\subseteq \mathbb M^{**}\subseteq \prod_j\mathbb M_j$, and again
$(\mathbb M^{**})^{*}\subseteq \mathbb M^{*}\subseteq \prod_j\mathbb M_j^*$.
The natural morphism $(\mathbb M^{**})^*\to \mathbb M^{*}$ is an epimorphism, because the natural morphism  $\mathbb M^*\to (\mathbb M^*)^{**}$ is a section. Therefore,
$\mathbb M^*=\mathbb M^{***}$.

The inclusion $\mathbb M\subseteq \mathbb M^{**}$ has a retraction, because
$\mathbb M=\mathbb M'^*$ is a dual functor and the natural morphism
$\mathbb M'^*\to (\mathbb M'^*)^{**}$ has a retraction. Then, $\mathbb M^{**}=\mathbb M\oplus \mathbb M''$.
Dually, $\mathbb M^*=\mathbb M^{***}$, so ${\mathbb M''}^*=0$. Hence, $\mathbb M''=0$, because $\mathbb M''\subset \prod_j \mathbb M_j$ and for any $0\neq (m_j)\in \prod_j \mathbb M_j$ there exist a $j\in J$ and a $w_j\in \mathbb M_j^*$ such that  $w_j(m_j)\neq 0$ (recall that the functor of modules $\mathbb M_j$ are reflexive).
Therefore, $\mathbb M=\mathbb M^{**}$.

\end{proof}

 \begin{definition} \label{defF} Let $\mathfrak F$ be the family of dual functors of $\mathcal R$-modules, $\mathbb M$, such that there exist  a set $J$ (which depends on $\mathbb M$), a structure of functor of $\prod_J \mathcal R$-modules on $\mathbb M$  and inclusions of functors of $\prod_J \mathcal R$-modules
$$\oplus_J\mathcal R\subseteq \mathbb M\subseteq \prod_J \mathcal R$$
(where $\oplus_{j\in J} \mathbb M_j \subseteq \prod_{j\in J}\mathbb M_j$ is the obvious inclusion).
\end{definition}

\begin{note} Every $\mathbb M\in\mathfrak F$ is reflexive, by Lemma \ref{uf}.\end{note}

\begin{examples} If $V$ is a free $\mathcal R$-module, $\mathcal V,\mathcal V^*\in\mathfrak F$. If we have a set $\{\mathbb M_i\in\mathfrak F\}_{i\in I}$, then
 $\oplus_{i\in I}\mathbb M_i, \prod_{i\in I}\mathbb M_i\in\mathfrak F$, as it is easy to check.

Let $V=\oplus_I R$ and $W=\oplus_J R$ be free $R$-modules, then
$\mathbb Hom_{\mathcal R}(\mathcal V,\mathcal W)\in\mathfrak F$: we have the obvious inclusions of  functors of $\prod_{I\times J} \mathcal R$-modules,
$\oplus_{I\times J}\mathcal R\subseteq \mathbb Hom_{\mathcal R}(\mathcal V,\mathcal W)=
\prod_I(\oplus_J\mathcal R)\subseteq \prod_{I\times J}\mathcal R$. This example has motivated Definition \ref{defF}.

\end{examples}

\begin{note} A quasi-coherent module $\mathcal M\in\mathfrak F$ if and only if
$M$ is a free $R$-module: Let us write $\oplus_J\mathcal R\subseteq \mathcal M\subseteq \prod_J \mathcal R$. Let $N=M/\oplus_J R$ and let $\pi\colon M\to N$ be the quotient morphism. By Lemma \ref{uf}, the morphism $\Hom_R(M,N)\to \Hom_R(\oplus_J R, N)$ is injective. As $\pi\mapsto 0$, then $\pi=0$ and $M=\oplus_JR$.

Let $\mathfrak F'$ be the family of functors of $\mathcal R$-modules, $\mathbb M$, such that $\mathbb M$ is a direct summand of some  functor of $\mathcal R$-modules of $\mathfrak F$.
Obviously, $\mathcal M\in\mathfrak F'$ if $M$ is a projective $R$-module.
All other  results obtained in this section for $\mathfrak F$ remain valid for $\mathfrak F'$.
\end{note}

\begin{proposition} If $\mathbb M'$ is a reflexive functor of $\mathcal R$-modules and $\mathbb M\in \mathfrak F$, then $\mathbb Hom_{\mathcal R}(\mathbb M',\mathbb M)$ is reflexive.\end{proposition}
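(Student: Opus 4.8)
The plan is to realize $\mathbb N:=\mathbb Hom_{\mathcal R}(\mathbb M',\mathbb M)$ as a functor satisfying the hypotheses of Lemma \ref{uf}, with all the reflexive factors taken to be $\mathbb M'^*$. Since $\mathbb M\in\mathfrak F$, Definition \ref{defF} tells us that $\mathbb M$ is a dual functor, that it carries a structure of $\prod_J\mathcal R$-module for some set $J$, and that there are inclusions of $\prod_J\mathcal R$-modules $\oplus_J\mathcal R\subseteq \mathbb M\subseteq\prod_J\mathcal R$. Because the target $\mathbb M$ is a $\prod_J\mathcal R$-module, $\mathbb N$ inherits a structure of $\prod_J\mathcal R$-module by acting on the target; and since the two displayed inclusions are $\prod_J\mathcal R$-linear, applying the additive functor $\mathbb Hom_{\mathcal R}(\mathbb M',-)$ to them produces $\prod_J\mathcal R$-linear morphisms.

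First I would build the chain $\oplus_J\mathbb M'^*\subseteq \mathbb N\subseteq \prod_J\mathbb M'^*$. Applying $\mathbb Hom_{\mathcal R}(\mathbb M',-)$ to $\mathbb M\subseteq\prod_J\mathcal R$, and using that this functor is left exact (composition with a monomorphism is injective on $\Hom$-sets) together with the fact that it commutes with products in the second variable, one obtains a monomorphism $\mathbb N\hookrightarrow \mathbb Hom_{\mathcal R}(\mathbb M',\prod_J\mathcal R)=\prod_J\mathbb M'^*$. For the left-hand inclusion, there is a natural morphism $\oplus_J\mathbb M'^*\to\mathbb Hom_{\mathcal R}(\mathbb M',\oplus_J\mathcal R)$, and composing it with $\mathbb Hom_{\mathcal R}(\mathbb M',\oplus_J\mathcal R)\hookrightarrow \mathbb N\hookrightarrow\prod_J\mathbb M'^*$ recovers the obvious inclusion $\oplus_J\mathbb M'^*\subseteq\prod_J\mathbb M'^*$; since the latter is injective, so is $\oplus_J\mathbb M'^*\to\mathbb N$. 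This yields the required chain of $\prod_J\mathcal R$-submodules, with $\oplus_J\mathbb M'^*\subseteq\prod_J\mathbb M'^*$ the obvious inclusion.

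It then remains to verify the two standing hypotheses of Lemma \ref{uf}: that $\mathbb N$ is a dual functor and that the factor $\mathbb M'^*$ is reflexive. For the former, write $\mathbb M=\mathbb L^*$ (possible since $\mathbb M$ is a dual functor) and use the tensor--hom adjunction to identify, for every functor $\mathbb P$, $\Hom_{\mathcal R}(\mathbb P,\mathbb N)=\Hom_{\mathcal R}(\mathbb P\otimes\mathbb M',\mathbb L^*)=\Hom_{\mathcal R}(\mathbb P\otimes\mathbb M'\otimes\mathbb L,\mathcal R)=\Hom_{\mathcal R}(\mathbb P,(\mathbb M'\otimes\mathbb L)^*)$; by Yoneda, $\mathbb N=(\mathbb M'\otimes\mathbb L)^*$ is a dual functor. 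For the latter, since $\mathbb M'$ is reflexive we have $\mathbb M'^{**}=\mathbb M'$, whence $\mathbb M'^{***}=(\mathbb M'^{**})^*=\mathbb M'^*$, so $\mathbb M'^*$ is reflexive. With these verifications, Lemma \ref{uf} applies to the chain $\oplus_J\mathbb M'^*\subseteq\mathbb N\subseteq\prod_J\mathbb M'^*$ and gives that $\mathbb N=\mathbb Hom_{\mathcal R}(\mathbb M',\mathbb M)$ is reflexive.

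The main obstacle is the left-hand inclusion: $\mathbb Hom_{\mathcal R}(\mathbb M',-)$ need not commute with direct sums, so $\mathbb Hom_{\mathcal R}(\mathbb M',\oplus_J\mathcal R)$ is in general strictly larger than $\oplus_J\mathbb M'^*$. The point to get right is that one only needs the inclusion $\oplus_J\mathbb M'^*\subseteq\mathbb N$, not an equality, and that this is forced by factoring through the product $\prod_J\mathbb M'^*$ as above. A secondary point requiring care is the compatibility of all these morphisms with the $\prod_J\mathcal R$-module structures, which holds because $\mathbb Hom_{\mathcal R}(\mathbb M',-)$ is additive and the module action is induced on the target.
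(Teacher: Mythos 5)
Your proposal is correct and follows essentially the same route as the paper: identify $\mathbb Hom_{\mathcal R}(\mathbb M',\mathbb M)$ as a dual functor, sandwich it as $\oplus_J\mathbb M'^*\subseteq \mathbb Hom_{\mathcal R}(\mathbb M',\oplus_J\mathcal R)\subseteq \mathbb Hom_{\mathcal R}(\mathbb M',\mathbb M)\subseteq \prod_J\mathbb M'^*$, and invoke Lemma \ref{uf} with all factors equal to the reflexive functor $\mathbb M'^*$. You merely fill in details the paper leaves implicit (the $\prod_J\mathcal R$-linearity, the injectivity of the left-hand inclusion, and the reflexivity of $\mathbb M'^*$), which is fine.
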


\begin{proof} $\mathbb Hom_{\mathcal R}(\mathbb M',\mathbb M)=(\mathbb M'\otimes\mathbb M^*)^*$ is a dual functor. Following previous notations, since
$\oplus_J\mathcal R\subseteq \mathbb M\subseteq \prod_J \mathcal R$, we have the inclusions
\begin{equation} \label{EQ1} \oplus_J\mathbb M'^*\subseteq \mathbb Hom_{\mathcal R}(\mathbb M',\oplus_J\mathcal R)\subseteq \mathbb Hom_{\mathcal R}(\mathbb M',\mathbb M)\subseteq
\mathbb Hom_{\mathcal R}(\mathbb M',\prod_J\mathcal R)=\prod_J \mathbb M'^*\end{equation}
Therefore, by Lemma \ref{uf}, $\mathbb Hom_{\mathcal R}(\mathbb M',\mathbb M)$ is reflexive.
\end{proof}

\begin{proposition} If $\mathbb M'$ is a reflexive functor of $\mathcal R$-modules and $\mathbb M\in \mathfrak F$, then $\mathbb Hom_{\mathcal R}(\mathbb M,\mathbb M')$ is reflexive.\end{proposition}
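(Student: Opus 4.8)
The plan is to reduce this statement to the preceding Proposition by dualizing both arguments. The key observation is the functorial identity $\mathbb Hom_{\mathcal R}(\mathbb M,\mathbb M')=\mathbb Hom_{\mathcal R}(\mathbb M'^*,\mathbb M^*)$, valid because $\mathbb M'$ is reflexive: using $\mathbb M'=\mathbb M'^{**}$ together with the tensor--hom adjunction $\mathbb Hom_{\mathcal R}(\mathbb A\otimes_{\mathcal R}\mathbb B,\mathbb C)=\mathbb Hom_{\mathcal R}(\mathbb A,\mathbb Hom_{\mathcal R}(\mathbb B,\mathbb C))$ (already invoked in the proof of Proposition \ref{2.4}), I would write $\mathbb Hom_{\mathcal R}(\mathbb M,\mathbb M')=\mathbb Hom_{\mathcal R}(\mathbb M,(\mathbb M'^*)^*)=(\mathbb M\otimes_{\mathcal R}\mathbb M'^*)^*$, and symmetrically $(\mathbb M'^*\otimes_{\mathcal R}\mathbb M)^*=\mathbb Hom_{\mathcal R}(\mathbb M'^*,\mathbb M^*)$. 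Since $\mathbb M\otimes_{\mathcal R}\mathbb M'^*=\mathbb M'^*\otimes_{\mathcal R}\mathbb M$, the two $\mathbb Hom$-functors coincide.

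Once this identity is in place, it suffices to check that the preceding Proposition applies to the pair $(\mathbb M'^*,\mathbb M^*)$, i.e. that $\mathbb M'^*$ is reflexive and that $\mathbb M^*\in\mathfrak F$. The first is immediate: $(\mathbb M'^*)^{**}=((\mathbb M'^*)^*)^*=(\mathbb M'^{**})^*=\mathbb M'^*$, where the middle equality is the reflexivity of $\mathbb M'$. For the second, I would invoke the computation carried out inside the proof of Lemma \ref{uf}: starting from the defining inclusions of $\prod_J\mathcal R$-modules $\oplus_J\mathcal R\subseteq\mathbb M\subseteq\prod_J\mathcal R$ (Definition \ref{defF}) and dualizing with $\mathbb M_j=\mathcal R$, one obtains $\oplus_J\mathcal R\subseteq\mathbb M^*\subseteq\prod_J\mathcal R$ together with a $\prod_J\mathcal R$-module structure on $\mathbb M^*$; as $\mathbb M^*$ is by construction a dual functor, this exhibits $\mathbb M^*\in\mathfrak F$.

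With both hypotheses verified, the preceding Proposition yields that $\mathbb Hom_{\mathcal R}(\mathbb M'^*,\mathbb M^*)$ is reflexive, and by the identity of the first paragraph this functor is exactly $\mathbb Hom_{\mathcal R}(\mathbb M,\mathbb M')$, which completes the proof. The only genuine content is the reduction itself; no estimate or limiting argument is required. Had one instead attempted to apply Lemma \ref{uf} directly to $\mathbb Hom_{\mathcal R}(\mathbb M,\mathbb M')$ — placing it between $\oplus_J\mathbb M'$ and $\prod_J\mathbb M'$ via restriction along $\oplus_J\mathcal R\subseteq\mathbb M$ and via the coordinate projections $\pi_j\in\mathbb M^*$ — the main obstacle would be the injectivity of the restriction map $\mathbb Hom_{\mathcal R}(\mathbb M,\mathbb M')\to\prod_J\mathbb M'$, since $\mathbb M'$ is merely reflexive rather than quasi-coherent and Proposition \ref{main} no longer applies on the nose. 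That injectivity can be recovered from the separatedness of $\mathbb M'$ by composing an arbitrary $w\colon\mathbb M\to\mathbb M'$ with elements of $\mathbb M'^*$ so as to reduce to quasi-coherent (indeed scalar) targets, where Lemma \ref{uf}(2) forces $w$ to be determined by its restriction to $\oplus_J\mathcal R$; but the dualization route above avoids this detour entirely.
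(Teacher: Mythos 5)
Your proof is correct, but it takes a different route from the paper's. The paper argues directly on $\mathbb Hom_{\mathcal R}(\mathbb M,\mathbb M')$: it sandwiches this functor between $\oplus_J\mathbb M'$ and $\prod_J\mathbb M'$ (with $\mathbb M_j=\mathbb M'$ reflexive) and invokes Lemma \ref{uf}(1); the only real work is the injectivity of the restriction map $\mathbb Hom_{\mathcal R}(\mathbb M,\mathbb M')\to\mathbb Hom_{\mathcal R}(\oplus_J\mathcal R,\mathbb M')=\prod_J\mathbb M'$, which the paper obtains exactly as you sketch in your final paragraph --- composing a nonzero $f$ with some $w\in\mathbb M'^*$ (using that $\mathbb M'$, being a dual functor, is separated) and applying Lemma \ref{uf}(2) to $w\circ f\colon\mathbb M\to\mathcal R$. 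Your dualization $\mathbb Hom_{\mathcal R}(\mathbb M,\mathbb M')=(\mathbb M\otimes\mathbb M'^*)^*=\mathbb Hom_{\mathcal R}(\mathbb M'^*,\mathbb M^*)$ is legitimate (the paper itself opens its proof with the first of these identities), and reducing to the preceding Proposition applied to the pair $(\mathbb M'^*,\mathbb M^*)$ is sound: $\mathbb M'^*$ is reflexive, and $\mathbb M^*\in\mathfrak F$ follows from the proof of Lemma \ref{uf} with $\mathbb M_j=\mathcal R$, a fact the paper uses without comment in the subsequent theorem on $(\mathbb M'\otimes\mathbb M)^{**}$. What your route buys is the elimination of the separatedness/injectivity step, replacing it by formal adjunction identities; what it costs is the explicit dependence on $\mathbb M^*\in\mathfrak F$, which the paper's direct argument does not need. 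Both proofs are complete and compatible with the logical order of the paper.
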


\begin{proof}
$\mathbb Hom_{\mathcal R}(\mathbb M,\mathbb M')=(\mathbb M\otimes\mathbb M'^*)^*$ is a dual functor.  Let $0\neq f\in \mathbb Hom_{\mathcal R}(\mathbb M,\mathbb M')$. There exists $w\in \mathbb M'^*$ such that $w\circ f\neq 0$. Let us follow previous notations. By Lemma \ref{uf}, $(w\circ f)_{|\oplus_J\mathcal R}\neq 0$, then $f_{|\oplus_J\mathcal R}\neq 0$.
Therefore, $\mathbb Hom_{\mathcal R}(\oplus_J\mathcal R, \mathbb M')\supseteq
\mathbb Hom_{\mathcal R}(\mathbb M, \mathbb M')$ and, likewise, $\mathbb Hom_{\mathcal R}(\oplus_J\mathcal R, \mathbb M')\supseteq \mathbb Hom_{\mathcal R}(\prod_J\mathcal R, \mathbb M')$. Hence, we have the inclusions
$$\prod_J \mathbb M'=\mathbb Hom_{\mathcal R}(\oplus_J\mathcal R, \mathbb M')\supseteq
\mathbb Hom_{\mathcal R}(\mathbb M, \mathbb M')\supseteq \mathbb Hom_{\mathcal R}(\prod_J\mathcal R, \mathbb M')\supseteq \oplus_J \mathbb M'$$
Therefore, by Lemma \ref{uf}, $\mathbb Hom_{\mathcal R}(\mathbb M,\mathbb M')$ is reflexive.
\end{proof}

\begin{theorem} \label{F8.7}
If $\mathbb M',\mathbb M\in\mathfrak F$, then $\mathbb Hom_{\mathcal R}(\mathbb M',\mathbb M)\in \mathfrak F$.\end{theorem}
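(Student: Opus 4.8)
The plan is to present $\mathbb Hom_{\mathcal R}(\mathbb M',\mathbb M)$ as a submodule of a single functor $\prod_{I\times J}\mathcal R$ and then quote Lemma \ref{uf}. First I would dispose of the dual-functor requirement in Definition \ref{defF}: as already observed, $\mathbb Hom_{\mathcal R}(\mathbb M',\mathbb M)=(\mathbb M'\otimes_{\mathcal R}\mathbb M^*)^*$, so it is a dual functor and only the sandwiching condition remains. Next I would pass from $\mathbb M'$ to $\mathbb M'^*$. Since $\mathbb M'\in\mathfrak F$ comes with inclusions $\oplus_I\mathcal R\subseteq\mathbb M'\subseteq\prod_I\mathcal R$ of $\prod_I\mathcal R$-modules, applying Lemma \ref{uf}(2) with $\mathbb M_i=\mathcal R$ and $\mathcal N=\mathcal R$ yields $\oplus_I\mathcal R\subseteq\mathbb M'^*\subseteq\prod_I\mathcal R$ again as $\prod_I\mathcal R$-modules; in particular $\mathbb M'^*\in\mathfrak F$ with the same index set $I$, and, crucially, $\mathbb M'^*$ is a genuine $\prod_I\mathcal R$-submodule of $\prod_I\mathcal R$.

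I would then invoke the preceding proposition applied to the reflexive functor $\mathbb M'$ and to $\mathbb M\in\mathfrak F$ (index set $J$): its proof produces the chain \ref{EQ1}, namely $\oplus_J\mathbb M'^*\subseteq\mathbb Hom_{\mathcal R}(\mathbb M',\mathbb M)\subseteq\prod_J\mathbb M'^*$ of $\prod_J\mathcal R$-modules. Substituting the sandwich $\oplus_I\mathcal R\subseteq\mathbb M'^*\subseteq\prod_I\mathcal R$ into this chain and using $\prod_J\prod_I\mathcal R=\prod_{I\times J}\mathcal R$, I obtain inclusions of functors $\oplus_{I\times J}\mathcal R\subseteq\mathbb Hom_{\mathcal R}(\mathbb M',\mathbb M)\subseteq\prod_{I\times J}\mathcal R$. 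The componentwise action of $\prod_{I\times J}\mathcal R$ on itself does restrict to $\prod_J\mathbb M'^*$, precisely because each factor $\mathbb M'^*$ is $\prod_I\mathcal R$-stable. If I can show that it also restricts to $\mathbb Hom_{\mathcal R}(\mathbb M',\mathbb M)$, then these become inclusions of $\prod_{I\times J}\mathcal R$-modules, Lemma \ref{uf}(1) (with all constituents equal to $\mathcal R$) applies, and Definition \ref{defF} gives $\mathbb Hom_{\mathcal R}(\mathbb M',\mathbb M)\in\mathfrak F$.

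The hard part will be exactly this last stability statement. The two module structures at hand — the $\prod_J\mathcal R$-action inherited from $\mathbb M$ and the $\prod_I\mathcal R$-action inherited from $\mathbb M'$ — commute, but together they generate only the subring of $\prod_{I\times J}\mathcal R$ of elements constant in $i$ and of elements constant in $j$, which is far smaller than all of $\prod_{I\times J}\mathcal R$. Writing a morphism as $g=(g_j)_j$ with $g_j=\pi_j\circ g\in\mathbb M'^*$, multiplication by a general $\Lambda=(\lambda_{j,i})$ sends $g$ to the assignment $m'\mapsto(g_j(\lambda_j m'))_j$, with $\lambda_j=(\lambda_{j,i})_i\in\prod_I\mathcal R$; since the argument $\lambda_j m'$ now varies with $j$, this is not of the form covered by either separate action, and one must prove that it still lands in $\mathbb M$. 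I would stress that this is genuinely delicate: an arbitrary reflexive $\prod_J\mathcal R$-submodule of $\prod_J\mathbb M'^*$ lying between $\oplus_J\mathbb M'^*$ and $\prod_J\mathbb M'^*$ need not be $\prod_{I\times J}\mathcal R$-stable in these coordinates, and moreover the condition ``$\Lambda\cdot g$ factors through $\mathbb M$'' cannot be tested against functionals in $\oplus_J\mathcal R$, since $\oplus_J\mathcal R\subseteq\mathbb M$ forces every such functional that kills $\mathbb M$ to vanish.

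To overcome this I would not argue from the two actions alone but use that the object is an honest $\mathbb Hom$ together with reflexivity. Concretely, I would exploit the pairing identity $\langle\lambda_j\cdot g_j,\,m'\rangle=\langle g_j,\,\lambda_j\cdot m'\rangle$, the stability of the good (i.e.\ $\prod_I\mathcal R$- and $\prod_J\mathcal R$-stable) presentations of $\mathbb M'^*$ and $\mathbb M$, and the finiteness provided by Proposition \ref{main} in the form used in the proof of Lemma \ref{uf}, in order to show simultaneously that both $\mathbb Hom_{\mathcal R}(\mathbb M',\mathbb M)$ and its dual $\mathbb Hom_{\mathcal R}(\mathbb M',\mathbb M)^*\subseteq\prod_J\mathbb M'$ are $\prod_{I\times J}\mathcal R$-stable; reflexivity then propagates stability from the manifestly stable $\oplus_{I\times J}\mathcal R$ to the whole functor. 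Establishing this propagation cleanly — equivalently, choosing coordinates adapted to the stable presentations of $\mathbb M$ and $\mathbb M'^*$ so that the defining support conditions of $\mathbb Hom_{\mathcal R}(\mathbb M',\mathbb M)$ become manifestly coordinatewise — is the step I expect to absorb essentially all the work.
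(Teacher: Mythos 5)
Your reduction to the sandwich $\oplus_{I\times J}\mathcal R\subseteq\mathbb Hom_{\mathcal R}(\mathbb M',\mathbb M)\subseteq\prod_{I\times J}\mathcal R$ is exactly the paper's Equation \ref{EQ2}, and you have correctly located the crux: the two commuting actions of $\prod_I\mathcal R$ and $\prod_J\mathcal R$ only give an action of the functor of algebras $\prod_I\mathcal R\otimes_{\mathcal R}\prod_J\mathcal R$, which is a proper subobject of $\prod_{I\times J}\mathcal R$, so Definition \ref{defF} cannot yet be invoked. But your proposal stops there. The closing paragraph, which promises to ``propagate stability by reflexivity'' using the pairing identity and the finiteness of Proposition \ref{main}, is a plan rather than an argument, and you concede that it is expected to absorb essentially all the work. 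As written this is a genuine gap: the direct coordinatewise verification you outline (deciding whether $m'\mapsto(g_j(\lambda_j m'))_j$ still factors through $\mathbb M$) has no evident handle, for precisely the reasons you yourself list.

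The paper closes this gap with a single categorical step that is absent from your proposal: since $\mathbb Hom_{\mathcal R}(\mathbb M',\mathbb M)=(\mathbb M'\otimes\mathbb M^*)^*$ is a \emph{dual} functor and a module over the functor of algebras $\mathbb A:=\prod_I\mathcal R\otimes_{\mathcal R}\prod_J\mathcal R$, Proposition \ref{2.4} applies: every dual functor of $\mathbb A$-modules is canonically an $\mathbb A^{**}$-module, and $\mathbb A$-linear morphisms between such objects are automatically $\mathbb A^{**}$-linear. By Proposition \ref{prop4} one computes $\mathbb A^*=\mathbb Hom_{\mathcal R}(\prod_I\mathcal R,\oplus_J\mathcal R)=(\oplus_I\mathcal R)\otimes(\oplus_J\mathcal R)$, hence $\mathbb A^{**}=(\oplus_{I\times J}\mathcal R)^*=\prod_{I\times J}\mathcal R$; the full componentwise action therefore exists for free, and the inclusions of Equation \ref{EQ2} are automatically $\prod_{I\times J}\mathcal R$-linear, so Lemma \ref{uf} and Definition \ref{defF} apply. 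This is the ``propagation by reflexivity'' you were hoping for, but it is carried by the bidual of the \emph{acting algebra}, not by the reflexivity of $\mathbb Hom_{\mathcal R}(\mathbb M',\mathbb M)$ itself; without citing or reproving Proposition \ref{2.4} your argument does not go through.
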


 \begin{proof} Let us write $\oplus_I\mathcal R\subseteq \mathbb M'\subseteq \prod_I \mathcal R$ and $\oplus_J\mathcal R\subseteq \mathbb M\subseteq \prod_J \mathcal R$. By Lemma \ref{uf}, $\oplus_I\mathcal R\subseteq \mathbb M'^*\subseteq \prod_I \mathcal R$, then by Equation \ref{EQ1},
\begin{equation}\label{EQ2}\oplus_{I\times J}\mathcal R\subseteq \oplus_{J}\mathbb M'^*\subseteq \mathbb Hom_{\mathcal R}(\mathbb M',\mathbb M)\subseteq \prod_{J}\mathbb M'^*\subseteq\prod_{I\times J} \mathcal R\end{equation}
Observe that $$(\prod_I\mathcal R\otimes \prod_J\mathcal R)^{*}=\mathbb Hom_{\mathcal R}(
\prod_I\mathcal R\otimes \prod_J\mathcal R,\mathcal R)=\mathbb Hom_{\mathcal R}(
\prod_I\mathcal R, \oplus_J\mathcal R)\overset{\text{\ref{prop4} }}=(\oplus_I\mathcal R)\otimes(\oplus_J\mathcal R)$$

$\mathbb Hom_{\mathcal R}(\mathbb M',\mathbb M)=(\mathbb M'\otimes\mathbb M^*)^*$ is a $\prod_I\mathcal R\otimes \prod_J\mathcal R$-module,
then by Proposition \ref{2.4}, it is a $(\prod_I\mathcal R\otimes_{\mathcal R} \prod_J\mathcal R)^{**}=(\oplus_I\mathcal R\otimes_{\mathcal R} \oplus_J\mathcal R)^*=\prod_{I\times J}\mathcal R$-module. Finally,
by Equation \ref{EQ2}, $\mathbb Hom_{\mathcal R}(\mathbb M',\mathbb M)\in \mathfrak F$.
\end{proof}

 \begin{theorem} If $\mathbb M,\mathbb M'\in\mathfrak F$, then $(\mathbb M'\otimes\mathbb M)^{**}\in\mathfrak F$ and it is the closure of dual functors of $\mathcal R$-modules of $\mathbb M'\otimes\mathbb M$.\end{theorem}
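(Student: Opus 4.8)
The plan is to prove both assertions at once by reducing everything to the single dual $(\mathbb M'\otimes\mathbb M)^*$, recognizing it as an internal $\mathbb Hom$, and then letting Theorem \ref{F8.7} do the work. The crucial first move is the tensor--Hom adjunction
$$(\mathbb M'\otimes_{\mathcal R}\mathbb M)^* = \mathbb Hom_{\mathcal R}(\mathbb M'\otimes_{\mathcal R}\mathbb M,\mathcal R) = \mathbb Hom_{\mathcal R}(\mathbb M',\mathbb M^*).$$
I would verify this by evaluating at each commutative $R$-algebra $S$, using that restriction to $S$-algebras commutes with $\otimes$ and with $(-)^*$ (both stated in the Preliminaries), together with the adjunction $\Hom_{\mathcal S}(\mathbb A\otimes\mathbb B,\mathcal S)=\Hom_{\mathcal S}(\mathbb A,\mathbb B^*)$ already exploited in Propositions \ref{3.2} and \ref{P10}.

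Next I would record two elementary closure facts for $\mathfrak F$, both immediate from Lemma \ref{uf}. First, $\mathfrak F$ is stable under $(-)^*$: if $\oplus_J\mathcal R\subseteq\mathbb N\subseteq\prod_J\mathcal R$, then applying part (2) of Lemma \ref{uf} with $\mathcal N=\mathcal R$ and using $\mathcal R^*=\mathcal R$ gives $\oplus_J\mathcal R\subseteq\mathbb N^*\subseteq\prod_J\mathcal R$, and since $\mathbb N^*$ is a dual functor we get $\mathbb N^*\in\mathfrak F$. In particular $\mathbb M^*\in\mathfrak F$. With $\mathbb M'\in\mathfrak F$ and $\mathbb M^*\in\mathfrak F$ in hand, Theorem \ref{F8.7} yields $(\mathbb M'\otimes\mathbb M)^*=\mathbb Hom_{\mathcal R}(\mathbb M',\mathbb M^*)\in\mathfrak F$; dualizing once more via the same stability gives $(\mathbb M'\otimes\mathbb M)^{**}\in\mathfrak F$, which is the first assertion.

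For the closure property I would observe that membership $(\mathbb M'\otimes\mathbb M)^*\in\mathfrak F$ forces $(\mathbb M'\otimes\mathbb M)^*$ to be reflexive (Lemma \ref{uf}). Hence Proposition \ref{3.2}, applied to the functor $\mathbb M'\otimes\mathbb M$ whose dual is reflexive, delivers the functorial equality
$$\Hom_{\mathcal R}((\mathbb M'\otimes\mathbb M)^{**},\mathbb M'')=\Hom_{\mathcal R}(\mathbb M'\otimes\mathbb M,\mathbb M'')$$
for every dual functor of $\mathcal R$-modules $\mathbb M''$, which is exactly the statement that $(\mathbb M'\otimes\mathbb M)^{**}$ is the closure of dual functors of $\mathbb M'\otimes\mathbb M$.

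The argument is therefore essentially an assembly of earlier results, so the only genuine content sits in the adjunction identity of the first step; once $(\mathbb M'\otimes\mathbb M)^*$ is displayed as $\mathbb Hom_{\mathcal R}(\mathbb M',\mathbb M^*)$, everything else is formal. The subtle point to watch is that $\mathfrak F$-membership demands not merely the two sandwiching inclusions but a \emph{compatible} $\prod\mathcal R$-module structure; this, however, is precisely what Theorem \ref{F8.7} already provides (the $\prod_{I\times J}\mathcal R$-structure built in its proof), so I would invoke that theorem directly rather than re-derive the inclusions, thereby avoiding any extra bookkeeping. I expect no real obstacle beyond ensuring that all the $\mathbb Hom$ functors in play are well defined, which holds because every functor here is trapped between a direct sum and a direct product of copies of $\mathcal R$.
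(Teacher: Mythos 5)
Your proposal is correct and follows essentially the same route as the paper: identify $(\mathbb M'\otimes\mathbb M)^*$ with $\mathbb Hom_{\mathcal R}(\mathbb M',\mathbb M^*)$, use that $\mathbb M^*\in\mathfrak F$ together with Theorem \ref{F8.7} to place this dual in $\mathfrak F$, and then invoke reflexivity plus Proposition \ref{3.2} for the closure statement and stability of $\mathfrak F$ under duals for the membership of $(\mathbb M'\otimes\mathbb M)^{**}$. The extra care you take in justifying the adjunction identity and the $\prod\mathcal R$-module structure is sound but is exactly what the paper leaves implicit.
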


\begin{proof} As $\mathbb M^*\in\mathfrak F$, we have $(\mathbb M'\otimes\mathbb M)^*=\mathbb Hom_{\mathcal R}(\mathbb M',\mathbb M^*)\in \mathfrak F$. Hence, firstly  $(\mathbb M'\otimes\mathbb M)^*$ is reflexive and by Proposition \ref{3.2} the closure of dual functors of $\mathbb M'\otimes\mathbb M$ is $(\mathbb M'\otimes\mathbb M)^{**}$, secondly
$(\mathbb M'\otimes\mathbb M)^{**}\in\mathfrak F$.
\end{proof}

\begin{proposition} \label{cansado} Let $R=K$ be a field.
Let $I$  be a totally ordered set and $\{f_{ij}\colon M_i\to M_j\}_{i\geq j\in I}$ be an inverse system of morphisms of $K$-modules. Then, $\plim{i\in I}\mathcal M_i\in \mathfrak F$.\end{proposition}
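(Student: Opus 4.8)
The plan is to reuse the structural analysis carried out in the proof of Proposition \ref{Fcansado} and then to observe that the object it produces lies in $\mathfrak F$ for formal reasons. The two facts I would lean on are that, over a field, every $K$-vector space is free, so every quasi-coherent $\mathcal K$-module belongs to $\mathfrak F$, and that $\mathfrak F$ is closed under arbitrary products (see the Examples following Definition \ref{defF}).

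First I would split into the two cases that appear in \ref{Fcansado}. If the vector spaces involved are all finite dimensional, then $\plim{i\in I}\mathcal M_i$ is quasi-coherent; being associated with a $K$-vector space, hence with a free module, it lies in $\mathfrak F$. Otherwise, arguing exactly as in the proof of \ref{Fcansado}, I would use the total ordering of $I$ (together with Proposition \ref{main}, which forbids factorizations through finite products) to extract a cofinal chain $i_1<i_2<\cdots$ with $\plim{i\in I}\mathcal M_i=\plim{n\in\mathbb N}\mathcal M_{i_n}$, replace each $M_{i_n}$ by its stable image $M'_{i_n}$ so that the transition maps become surjective, and set $H_n:=\Ker[M'_{i_n}\to M'_{i_{n-1}}]$. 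Because short exact sequences of $K$-vector spaces split, this yields the isomorphism $\plim{i\in I}\mathcal M_i\simeq\prod_n\mathcal H_n$.

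It then remains to check that $\prod_n\mathcal H_n\in\mathfrak F$. Each $H_n$ is a $K$-vector space, hence free over $K$, so $\mathcal H_n\in\mathfrak F$, and by closure of $\mathfrak F$ under products one gets $\prod_n\mathcal H_n\in\mathfrak F$. To make this explicit I would choose a basis $J_n$ of $H_n$, so that $\mathcal H_n=\oplus_{J_n}\mathcal K$, put $J:=\coprod_nJ_n$, and record the chain of $\prod_J\mathcal R$-linear inclusions
$$\oplus_J\mathcal K=\oplus_n\oplus_{J_n}\mathcal K\subseteq\prod_n\oplus_{J_n}\mathcal K=\prod_n\mathcal H_n\subseteq\prod_n\prod_{J_n}\mathcal K=\prod_J\mathcal K,$$
which, together with the identification $\prod_n\mathcal H_n=(\oplus_n\mathcal H_n^*)^*$ showing that it is a dual functor, exhibits $\prod_n\mathcal H_n$ as an element of $\mathfrak F$. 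Since $\mathfrak F$ is stable under isomorphism, I would conclude $\plim{i\in I}\mathcal M_i\in\mathfrak F$.

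The hard part is not the present argument but the reduction imported from \ref{Fcansado}: passing to a countable cofinal subchain and stabilizing the images in the Mittag--Leffler fashion. Once the isomorphism $\plim{i\in I}\mathcal M_i\simeq\prod_n\mathcal H_n$ is in hand, membership in $\mathfrak F$ is essentially bookkeeping with direct sums, products, and bases.
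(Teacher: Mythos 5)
Your proof is correct and follows essentially the same route as the paper, whose entire argument is that $\plim{i\in I}\mathcal M_i$ is a direct product of quasi-coherent modules by the proof of Proposition \ref{Fcansado}, hence lies in $\mathfrak F$. You merely make explicit the bookkeeping the paper leaves implicit: the identification $\prod_n\mathcal H_n=(\oplus_n\mathcal H_n^*)^*$ as a dual functor and the chain of $\prod_J\mathcal K$-linear inclusions $\oplus_J\mathcal K\subseteq\prod_n\mathcal H_n\subseteq\prod_J\mathcal K$ obtained from bases of the $H_n$.
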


\begin{proof} $\plim{i\in I}\mathcal M_i$ is a direct product of $\mathcal K$-quasi-coherent modules, by the proof of Proposition \ref{Fcansado}. Hence, $\plim{i\in I}\mathcal M_i\in\mathfrak F$.
\end{proof}

\begin{proposition} \label{F8.10} If $\mathbb M\in\mathfrak F$, then $\mathbb M$ is D-proquasi-coherent. \end{proposition}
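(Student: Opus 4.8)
The plan is to verify the criterion of Theorem \ref{W1}: to conclude that $\mathbb M$ is D-proquasi-coherent it suffices to show that for every $R$-module $N$ the evaluation map $\Hom_{\mathcal R}(\mathbb M,\mathcal N)\to \Hom_{R}(\mathbb M(R),N)$, $f\mapsto f_R$, is injective. Since $\mathbb M\in\mathfrak F$, I would fix a set $J$ together with inclusions of functors of $\prod_J\mathcal R$-modules $\oplus_J\mathcal R\subseteq \mathbb M\subseteq \prod_J\mathcal R$. Here every factor $\mathbb M_j=\mathcal R$ is reflexive, so Lemma \ref{uf} applies directly with $\mathbb M_j=\mathcal R$.

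First I would apply Lemma \ref{uf}(2) to the given $N$ to obtain the chain of inclusions
$$\oplus_J\mathcal N=\oplus_J\mathbb Hom_{\mathcal R}(\mathcal R,\mathcal N)\subseteq \mathbb Hom_{\mathcal R}(\mathbb M,\mathcal N)\subseteq \prod_J\mathbb Hom_{\mathcal R}(\mathcal R,\mathcal N)=\prod_J\mathcal N,$$
where I use $\mathbb Hom_{\mathcal R}(\mathcal R,\mathcal N)=\mathcal N$. The essential point is that the right-hand inclusion is precisely the assignment $f\mapsto (f_{|\mathbb M_j})_j$, so this assignment is injective: a morphism $f\colon \mathbb M\to\mathcal N$ is determined by the family of its restrictions to the copies $\mathbb M_j=\mathcal R$ sitting inside $\oplus_J\mathcal R$.

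Second I would identify each such restriction with a value of $f_R$. By Proposition \ref{tercer}, $\Hom_{\mathcal R}(\mathcal R,\mathcal N)=\Hom_R(R,N)=N$, and under this identification the restriction $f_{|\mathbb M_j}$ corresponds to $f_R(1_j)$, where $1_j\in \oplus_J R\subseteq \mathbb M(R)$ denotes the generator of the $j$-th copy of $\mathcal R$. Consequently, if $f_R=0$ as a map $\mathbb M(R)\to N$, then $f_R(1_j)=0$ for every $j\in J$, so every restriction $f_{|\mathbb M_j}$ vanishes; by the injectivity recorded in the previous step, $f=0$. This yields injectivity of $f\mapsto f_R$ for all $N$, whence $\mathbb M$ is D-proquasi-coherent by Theorem \ref{W1}.

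There is no genuine obstacle here; the only point requiring care is the bookkeeping in the second step, namely translating the vanishing of $f_R$ on all of $\mathbb M(R)$ into the vanishing of each restriction $f_{|\mathbb M_j}$. This works exactly because the generators $1_j$ lie in the subfunctor $\oplus_J\mathcal R$, on which Lemma \ref{uf}(2) already guarantees that $f$ is completely determined.
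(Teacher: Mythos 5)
Your proof is correct and follows essentially the same route as the paper: both reduce to the criterion of Theorem \ref{W1}, use Lemma \ref{uf} to see that a morphism $f\colon \mathbb M\to\mathcal N$ is determined by its restriction to $\oplus_J\mathcal R$, and then observe that this restriction is determined by $f_R$ on $\oplus_J R\subseteq\mathbb M(R)$. Your version merely makes the last step explicit coordinate-by-coordinate via the generators $1_j$, which is a harmless unwinding of the paper's appeal to Proposition \ref{tercer}.
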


 \begin{proof} We have to prove that the  morphism
$$\Hom_{\mathcal R}(\mathbb M,\mathcal N) \subseteq \Hom_R(\mathbb M(R),N),\,\, w\mapsto w_R$$
is injective, for all $R$-modules $N$.
 Let us follow previous notations. By Lemma \ref{uf}, $w\in \Hom_{\mathcal R}(\mathbb M,\mathcal N)$ is determined by $w_{|\oplus_J\mathcal R}$, and this one is determined by $(w_{|\oplus_J\mathcal R})_R$. As $\oplus_J R\subseteq  \mathbb M(R)$,  $w$ is determined by $w_R$.
 \end{proof}

\begin{lemma} \label{invqua} Let $\mathbb M\in\mathfrak F$ and let $N$ be an $R$-module. Then, every morphism of $\mathcal R$-modules $\phi\colon \mathbb M\to \mathcal N$ uniquely factors through an epimorphism onto the quasi-coherent module associated with the $R$-submodule of $N$, $\Ima \phi_R\subseteq N$.\end{lemma}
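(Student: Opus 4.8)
The plan is to exploit that every $\mathbb M\in\mathfrak F$ is D-proquasi-coherent (Proposition \ref{F8.10}) together with the cokernel criterion of Lemma \ref{lemaD}. First I set $N':=\Ima\phi_R\subseteq N$ and write $\mathcal N'$ for its associated quasi-coherent module, with $\iota\colon\mathcal N'\to\mathcal N$ the morphism induced by the inclusion $N'\hookrightarrow N$. By Lemma \ref{lemaD}, the cokernel of $\phi$ is the quasi-coherent module associated with $\Coker\phi_R=N/N'$, so the natural projection $\mathcal N\to\Coker\phi$ kills $\phi$; this is the functorial incarnation of the statement that $\phi$ ``lands in $N'$''.

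Next I produce the epimorphism. Fixing the inclusions $\oplus_J\mathcal R\subseteq\mathbb M\subseteq\prod_J\mathcal R$ from Definition \ref{defF}, the restriction $\phi_{|\oplus_J\mathcal R}$ corresponds to the family $(n_j)_j$ with $n_j:=\phi_R(e_j)$. By Lemma \ref{uf}, for every $m=(m_j)_j\in\mathbb M(S)$ one has $\phi_S(m)=\sum_{j}m_j n_j$ with only finitely many nonzero terms, whence $n_j\in N'$ for all $j$ and $N'=\langle n_j\mid j\in J\rangle$; in particular the quasi-coherent morphism $\mathcal p\colon\oplus_J\mathcal R\to\mathcal N'$, $e_j\mapsto n_j$, is an epimorphism with $\iota\circ\mathcal p=\phi_{|\oplus_J\mathcal R}$. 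I then define $\phi'\colon\mathbb M\to\mathcal N'$ by the same formula $\phi'_S(m):=\sum_j m_j n_j\in N'\otimes_R S$, so that $\iota\circ\phi'=\phi$ and $\phi'$ is an epimorphism because its restriction $\mathcal p$ already is. Uniqueness of the factorization is then immediate: since $\mathbb M$ is D-proquasi-coherent, Theorem \ref{W1} makes $\Hom_{\mathcal R}(\mathbb M,\mathcal N')\hookrightarrow\Hom_R(\mathbb M(R),N')$ injective, so any factorization is forced to restrict to $\phi_R$ on $R$-points and hence is unique.

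The main obstacle is the well-definedness of $\phi'$, i.e.\ showing that for each $m\in\mathbb M(S)$ the sum $\sum_j m_j n_j$ is finite already in $N'\otimes_R S$, not merely in $N\otimes_R S$. Over a field this is free because $\mathcal N'\to\mathcal N$ is injective (this is exactly the content of Note \ref{3.14} used in Proposition \ref{invqua6}), but over a general $R$ the map $\iota$ may have a kernel, so a term $m_j n_j$ can vanish in $N\otimes_R S$ while being nonzero in $N'\otimes_R S$, and a priori infinitely many tail terms could survive. This is precisely where the full strength of $\mathbb M\in\mathfrak F$---reflexivity (Lemma \ref{uf}) together with the $\prod_J\mathcal R$-module structure---must enter, rather than mere D-proquasi-coherence.

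I would resolve this last point by characterizing, via Lemma \ref{uf} and Proposition \ref{main}, exactly which families $(n'_j)\in\prod_J N'$ extend to a morphism $\mathbb M\to\mathcal N'$, and then showing that extendability of $(n_j)$ into $\mathcal N$ forces extendability into $\mathcal N'$. Concretely, using the idempotents $\epsilon_F\in\prod_J\mathcal R$ one observes that $(1-\epsilon_F)m\in\Ima(\Ker\phi\to\mathbb M)$ once the finite set $F$ exhausts the indices where $m_j n_j\neq 0$ in $N\otimes_R S$, which reduces the finiteness of $\phi'_S(m)$ to controlling the tail $\sum_{j\notin F}m_j n_j$ in $N'\otimes_R S$; I expect reflexivity of $\mathbb M$ to be the ingredient that rules out any surviving tail, so that the truncated sums $\phi'_S(\epsilon_F m)$ stabilize and define $\phi'_S(m)$ unambiguously. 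Once this finiteness upgrade is established, the $\mathcal R$-linearity and functoriality of $\phi'$ follow directly from the defining formula, completing the proof.
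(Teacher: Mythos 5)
Your argument has a genuine gap at exactly the point you flag yourself: the well-definedness of $\phi'_S(m):=\sum_j m_j n_j$ as an element of $N'\otimes_R S$. Lemma \ref{uf} (via Proposition \ref{main}) only guarantees that all but finitely many of the terms $m_j n_j$ vanish in $N\otimes_R S$; since $N'\otimes_R S\to N\otimes_R S$ need not be injective over a general ring, this gives no control on how many of the terms $m_j\otimes n_j$ are nonzero in $N'\otimes_R S$, so your formula does not yet define an element of $\mathcal N'(S)$. Your proposed repair is not carried out and is essentially circular: to kill the tail $\sum_{j\notin F}m_j n_j$ in $N'\otimes_R S$ you would need to know that the (not yet constructed) $\phi'$ vanishes on $\Ker\phi$, or equivalently that the partial sums $\sum_{j\in F}m_j n_j$ stabilize in $N'\otimes_R S$ --- which is precisely the finiteness statement at issue; ``I expect reflexivity to rule out any surviving tail'' is a hope, not a proof. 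Likewise, your first paragraph's appeal to Lemma \ref{lemaD} only shows that $\phi$ factors through the kernel of $\mathcal N\to (\text{q.c.\ module of } N/N')$, and that kernel is the \emph{image} of $\mathcal N'\to\mathcal N$ rather than $\mathcal N'$ itself, so it does not produce the required epimorphism onto $\mathcal N'$. The uniqueness half of your argument (via Theorem \ref{W1} and D-proquasi-coherence) is correct and matches the paper.

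The paper's proof sidesteps the pointwise formula entirely by dualizing. The restriction $\phi_{|\oplus_J\mathcal R}$ is a morphism of quasi-coherent modules, so it factors as $\oplus_J\mathcal R\twoheadrightarrow\mathcal N'\to\mathcal N$ for free; dualizing and using the injection $\mathbb M^*\hookrightarrow(\oplus_J\mathcal R)^*=\prod_J\mathcal R$ supplied by Lemma \ref{uf}, one sees that $\phi^*\colon\mathcal N^*\to\mathbb M^*$ factors through ${\mathcal N'}^*$, and dualizing once more --- this is where $\mathbb M=\mathbb M^{**}$ is used --- yields $\phi'\colon\mathbb M\to\mathcal N'$ abstractly. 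The finiteness of $\sum_j m_j\otimes n_j$ in $N'\otimes_R S$ is then a \emph{consequence} of the existence of $\phi'$ (apply Lemma \ref{uf} to $\phi'$ itself), not an ingredient of its construction. To salvage your explicit approach you would have to prove that finiteness independently; as written, the proof is incomplete.
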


\begin{proof} Let us follows previous notations. Consider $\oplus_J\mathcal R\subseteq\mathbb M\subseteq \prod_J\mathcal R$. By Lemma \ref{uf},
$\mathbb M^*\subseteq (\oplus_J\mathcal R)^*=\prod_J\mathcal R$.
 The morphism $\phi_{|\oplus_J\mathcal R}\colon \oplus_J\mathcal R\to \mathcal N$ factors via the quasi-coherent module associated with $N':=\Ima (\phi_{|\oplus_J\mathcal R})_R$. Then, the dual morphism $\phi^*\colon \mathcal N^*\to \mathbb M^*\subseteq \prod_J\mathcal R$, factors via, ${\mathcal N'}^*$. Hence, $\phi$ factors via a morphism $\phi'\colon \mathbb M\to \mathcal N'$.
 In particular, $\phi'$ is an epimorphism and $N'=\Ima \phi_R$.

Uniqueness: Assume $\phi$ factors through an epimorphism $\phi''\colon \mathbb M\to \mathcal N'$. The morphisms $\phi',\phi''$ are determined by  ${\phi'}_R,{\phi''}_R$. Then, $\phi'=\phi''$, because ${\phi'}_R={\phi''}_R$.

\end{proof}

\begin{theorem} \label{FP2} Let $\mathbb M\in\mathfrak F$. Let $\{\mathcal M_i\}_{i\in I}$ be the set of all
quasi-coherent quotients of $\mathbb M$. Then,
$\mathbb M^*=\ilim{i\in I}  \mathcal M_i^*$.
Therefore, $$\mathbb M=\plim{i\in I}  \mathcal M_i.$$

\end{theorem}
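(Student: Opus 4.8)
The plan is to reduce the theorem to Theorem \ref{P2}, which already establishes the analogous statement for D-proquasi-coherent functors of $\mathcal{K}$-modules over a field. The key observations are that $\mathbb{M}\in\mathfrak{F}$ is D-proquasi-coherent (this is exactly Proposition \ref{F8.10}) and that $\mathbb{M}$ is reflexive (this is the Note immediately following Definition \ref{defF}, a consequence of Lemma \ref{uf}). However, Theorem \ref{P2} is stated only when $R=K$ is a field, whereas the present theorem is stated for an arbitrary commutative ring $R$, so the genuine work is to transplant the direct-limit computation of $\mathbb{M}^*$ into the general setting.

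First I would invoke Lemma \ref{invqua} as the correct replacement for Note \ref{3.14}: it says that every morphism $\phi\colon\mathbb{M}\to\mathcal{N}$ from a functor in $\mathfrak{F}$ to a quasi-coherent module factors uniquely through an epimorphism onto the quasi-coherent module associated with $\operatorname{Im}\phi_R$. This is precisely the surjectivity input that drives the proof of Theorem \ref{P2}. Concretely, for each commutative $R$-algebra $S$ I would write $\mathbb{M}^*(S)=\operatorname{Hom}_{\mathcal{R}}(\mathbb{M},\mathcal{S})$ by Corollary \ref{adj2}. The natural map $\ilim{i\in I}\mathcal{M}_i^*(S)\to\mathbb{M}^*(S)$ is injective because each $\mathcal{M}_i$ is a quotient of $\mathbb{M}$, so the dual maps $\mathcal{M}_i^*\to\mathbb{M}^*$ are compatible monomorphisms into $\mathbb{M}^*$. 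For surjectivity, given any $w\in\mathbb{M}^*(S)=\operatorname{Hom}_{\mathcal{R}}(\mathbb{M},\mathcal{S})$, Lemma \ref{invqua} factors $w$ through the quasi-coherent module $\mathcal{N}'$ associated with $\operatorname{Im}w_R\subseteq S$, and this $\mathcal{N}'$ is one of the quasi-coherent quotients $\mathcal{M}_i$ of $\mathbb{M}$; hence $w$ lies in the image of $\mathcal{M}_i^*(S)\hookrightarrow\ilim{i\in I}\mathcal{M}_i^*(S)$. This establishes $\mathbb{M}^*=\ilim{i\in I}\mathcal{M}_i^*$.

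To obtain the second, dual statement, I would dualize and use reflexivity. Taking duals of the identity $\mathbb{M}^*=\ilim{i\in I}\mathcal{M}_i^*$ and using that dualizing converts direct limits into inverse limits together with Theorem \ref{reflex} (giving $\mathcal{M}_i^{**}=\mathcal{M}_i$), I get $\mathbb{M}^{**}=(\ilim{i\in I}\mathcal{M}_i^*)^*=\plim{i\in I}\mathcal{M}_i^{**}=\plim{i\in I}\mathcal{M}_i$. Since $\mathbb{M}\in\mathfrak{F}$ is reflexive, $\mathbb{M}=\mathbb{M}^{**}$, and therefore $\mathbb{M}=\plim{i\in I}\mathcal{M}_i$, as desired.

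The main obstacle I anticipate is checking that the surjectivity argument of Theorem \ref{P2} really goes through verbatim over a ring rather than a field. Over a field, Note \ref{3.14} was available because kernels and images of morphisms of quasi-coherent modules are again quasi-coherent, and the factorization through $\operatorname{Im}\phi_R$ used that every vector subspace is a direct summand. Over a general ring this fails, which is why the family $\mathfrak{F}$ and Lemma \ref{invqua} were introduced: the freeness built into the definition of $\mathfrak{F}$ (via the sandwiching $\oplus_J\mathcal{R}\subseteq\mathbb{M}\subseteq\prod_J\mathcal{R}$) is exactly what supplies the needed factorization. So the care required is to confirm that Lemma \ref{invqua} covers every $w\in\mathbb{M}^*(S)$ for every $S$, and that the resulting image module $\mathcal{N}'$ genuinely belongs to the indexing set $\{\mathcal{M}_i\}_{i\in I}$ of quasi-coherent quotients; granting these, the proof is a direct transcription of the field case.
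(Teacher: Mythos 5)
Your proposal is correct and follows essentially the same route as the paper, whose entire proof is the instruction ``proceed as in the proof of Theorem \ref{P2}''; you have simply spelled out that transcription, correctly identifying Lemma \ref{invqua} as the substitute for Note \ref{3.14} in supplying the surjectivity of $\ilim{i}\mathcal M_i^*(S)\to\Hom_{\mathcal R}(\mathbb M,\mathcal S)$, and then obtaining $\mathbb M=\plim{i}\mathcal M_i$ by dualizing and using the reflexivity of $\mathbb M\in\mathfrak F$ from Lemma \ref{uf}.
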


\begin{proof} Proceed  as in the proof of Theorem \ref{P2}.\end{proof}

\begin{proposition} Let $\mathbb A\in\mathfrak F$ be a functor of  $\mathcal R$-algebras
and let $\mathbb M,\mathbb M'\in\mathfrak F$ be functors of $\mathbb A$-modules.
Then, a morphism of $\mathcal R$-modules, $f\colon \mathbb M\to\mathbb M'$, is a morphism
of $\mathbb A$-modules if and only if $f_R\colon \mathbb M(R)\to\mathbb M'(R)$ is a morphism of $\mathbb A(R)$-modules.

\end{proposition}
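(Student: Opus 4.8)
The plan is to mimic the proof of Proposition \ref{P10}(2) (to which Proposition \ref{P9} already reduces), now over the base ring $R$ instead of a field. First I would reformulate both conditions as the vanishing of a ``defect'' morphism. The morphism $f$ is a morphism of $\mathbb A$-modules if and only if the morphism of $\mathcal R$-modules $F\colon \mathbb A\otimes_{\mathcal R}\mathbb M\to\mathbb M'$, $F(a\otimes m):=f(am)-af(m)$, is the zero morphism; likewise, $f_R$ is a morphism of $\mathbb A(R)$-modules if and only if $F_R\colon \mathbb A(R)\otimes_R\mathbb M(R)\to\mathbb M'(R)$, $F_R(a\otimes m):=f_R(am)-af_R(m)$, is zero. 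Since $F_R$ is precisely the image of $F$ under the assignment $f\mapsto f_R$, the whole statement reduces to proving that $F\mapsto F_R$ is injective on $\Hom_{\mathcal R}(\mathbb A\otimes_{\mathcal R}\mathbb M,\mathbb M')$.

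Second, I would establish that injectivity through the chain
$$\aligned \Hom_{\mathcal R}(\mathbb A\otimes_{\mathcal R}\mathbb M,\mathbb M') & = \Hom_{\mathcal R}(\mathbb A,\mathbb Hom_{\mathcal R}(\mathbb M,\mathbb M')) \\ & \subseteq \Hom_R(\mathbb A(R),\Hom_{\mathcal R}(\mathbb M,\mathbb M')) \\ & \subseteq \Hom_R(\mathbb A(R),\Hom_R(\mathbb M(R),\mathbb M'(R))) \\ & = \Hom_R(\mathbb A(R)\otimes_R\mathbb M(R),\mathbb M'(R)), \endaligned$$
where the first equality is the Hom--tensor adjunction and the last is the ordinary Hom--tensor adjunction for $R$-modules. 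The first inclusion is Theorem \ref{QHom} applied to $\mathbb A$: by Proposition \ref{F8.10} the functor $\mathbb A$ is D-proquasi-coherent, and the target $\mathbb Hom_{\mathcal R}(\mathbb M,\mathbb M')$ is separated because it is a dual functor. Indeed, writing $\mathbb M'=\mathbb N'^*$ (legitimate, as $\mathbb M'\in\mathfrak F$ is by definition a dual functor) one has $\mathbb Hom_{\mathcal R}(\mathbb M,\mathbb M')=(\mathbb M\otimes_{\mathcal R}\mathbb N')^*$, and dual functors are separated. The second inclusion is Theorem \ref{QHom} applied to $\mathbb M$, which is D-proquasi-coherent by Proposition \ref{F8.10}, with separated target $\mathbb M'$; here I also use $\mathbb Hom_{\mathcal R}(\mathbb M,\mathbb M')(R)=\Hom_{\mathcal R}(\mathbb M,\mathbb M')$.

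Finally, since each arrow in the chain is injective and the composite carries $F$ to $F_R$, injectivity of $F\mapsto F_R$ follows, so $F_R=0$ forces $F=0$; the reverse implication is trivial by restricting to $S=R$. This settles both directions.

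I expect the only genuine verification to be that the intermediate functor $\mathbb Hom_{\mathcal R}(\mathbb M,\mathbb M')$ is well defined and separated, so that Theorem \ref{QHom} is legitimately applicable at the first inclusion; once the identification $\mathbb Hom_{\mathcal R}(\mathbb M,\mathbb M')=(\mathbb M\otimes_{\mathcal R}\mathbb N')^*$ is recorded this is immediate, since dual functors are separated by the Example following the definition of separatedness. The remaining bookkeeping, namely that each displayed identification and inclusion is compatible with evaluation at $R$ so that the composite really is $F\mapsto F_R$, is routine and identical to the field case treated in Proposition \ref{P10}(2).
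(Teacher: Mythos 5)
Your proof is correct and is essentially the paper's own argument: the paper simply says to proceed as in Proposition \ref{P9}, which in turn reduces to the chain of inclusions in Proposition \ref{P10}(2), and you have carried out exactly that chain with the right substitutions (D-proquasi-coherence of $\mathbb A$ and $\mathbb M$ via Proposition \ref{F8.10}, separatedness of $\mathbb Hom_{\mathcal R}(\mathbb M,\mathbb M')=(\mathbb M\otimes_{\mathcal R}\mathbb N')^*$ and of $\mathbb M'$ as dual functors, and Theorem \ref{QHom} for both inclusions). No gaps.
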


\begin{proof} Proceed as in Proposition \ref{P9}.

\end{proof}

\begin{notation} Let $M$ be an $R$-module and let $M'\subseteq M$ be an $R$-submodule. By abuse of notation we will say that $\mathcal M'$ is a quasi-coherent submodule of $\mathcal M$.\end{notation}

\begin{proposition} \label{invqua4}
Let $\mathbb A\in\mathfrak F$ be a  functor of ${\mathcal R}$-algebras, let $\mathcal M$ be an $\mathbb A$-module and let $M' \subset M$ be an $R$-submodule.  Then, $\mathcal M'$ is a quasi-coherent
$\mathbb A$-submodule of $\mathcal M$ if and only if $M'$ is an
$\mathbb A(R)$-submodule of $M$.
\end{proposition}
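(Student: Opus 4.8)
The plan is to mimic the proof of Proposition \ref{P10}(1), replacing the use of Note \ref{3.14} (which holds over a field) by Lemma \ref{invqua} (which holds for $\mathbb M\in\mathfrak F$ over an arbitrary base ring $R$). The forward implication is immediate: if $\mathcal M'$ is an $\mathbb A$-submodule of $\mathcal M$, then evaluating the multiplication morphism $\mathbb A\otimes_{\mathcal R}\mathcal M'\to\mathcal M'$ at $R$-points shows that $M'=\mathcal M'(R)$ is stable under the $\mathbb A(R)$-module structure of $M=\mathcal M(R)$, hence it is an $\mathbb A(R)$-submodule of $M$.

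For the converse, assume $M'$ is an $\mathbb A(R)$-submodule of $M$. I want to show that the multiplication morphism $\mathbb A\otimes_{\mathcal R}\mathcal M\to\mathcal M$ restricts to $\mathbb A\otimes_{\mathcal R}\mathcal M'\to\mathcal M'$; equivalently, that the composite $\mathbb A\otimes_{\mathcal R}\mathcal M'\to\mathbb A\otimes_{\mathcal R}\mathcal M\to\mathcal M$ factors through the inclusion $\mathcal M'\hookrightarrow\mathcal M$. First I fix $m'\in M'$ and consider the $\mathcal R$-linear morphism $\phi_{m'}\colon\mathbb A\to\mathcal M$, $a\mapsto a\cdot m'$. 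Its restriction to $R$-points is $\mathbb A(R)\to M$, $a\mapsto am'$, whose image $\Ima(\phi_{m'})_R=\mathbb A(R)\cdot m'$ lies in $M'$ precisely because $M'$ is an $\mathbb A(R)$-submodule. Since $\mathbb A\in\mathfrak F$, Lemma \ref{invqua} applies and $\phi_{m'}$ factors through the quasi-coherent module associated with $\mathbb A(R)\cdot m'$, which is contained in $\mathcal M'$. Hence $\phi_{m'}$ factors through $\mathcal M'$ for every $m'\in M'$.

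It remains to pass from these cyclic morphisms to the tensor product. Over a field one could pick a basis of $M'$ and decompose $\mathbb A\otimes_{\mathcal K}\mathcal M'$ as a direct sum of copies of $\mathbb A$, but over $R$ the module $M'$ need not be free, so instead I use the $S$-linearity of the action. For a commutative $R$-algebra $S$, a general element of $\mathcal M'(S)=M'\otimes_R S$ is a sum $\sum_k m'_k\otimes s_k$ with $m'_k\in M'$ and $s_k\in S$, and for $a\in\mathbb A(S)$ one has $a\cdot\bigl(\sum_k m'_k\otimes s_k\bigr)=\sum_k s_k\cdot\bigl(a\cdot(m'_k\otimes 1)\bigr)$; each summand lies in $\mathcal M'(S)$ by the previous paragraph, so the whole element does. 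Thus $\mathbb A\otimes_{\mathcal R}\mathcal M'\to\mathcal M$ factors through $\mathcal M'$, and $\mathcal M'$ is a quasi-coherent $\mathbb A$-submodule of $\mathcal M$.

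The main obstacle is this last reduction: I cannot choose a basis of $M'$ as in the field case, so the argument must rest on the $S$-bilinearity of the $\mathbb A$-action together with the element-wise factorization. The decisive input is that $\mathbb A\in\mathfrak F$, which makes Lemma \ref{invqua} available; this is the $\mathfrak F$-theoretic replacement for the quasi-coherence-of-images property used over a field in Proposition \ref{P10}.
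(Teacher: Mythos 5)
Your first two steps coincide with the paper's: the forward implication is immediate, and Lemma \ref{invqua} gives the factorization of each cyclic morphism $\mathbb A\to \mathcal M$, $a\mapsto a\cdot m'$, through $\mathcal M'$. The gap is exactly at the step you flag as the main obstacle. Over a general ring $R$ the map $i_S\colon \mathcal M'(S)=M'\otimes_R S\to M\otimes_R S=\mathcal M(S)$ need not be injective (the paper explicitly declares ``quasi-coherent submodule'' an abuse of notation for this reason), so $\mathcal M'\to\mathcal M$ is \emph{not} an inclusion of functors. Your computation $a\cdot\bigl(\sum_k m'_k\otimes s_k\bigr)=\sum_k s_k\cdot\bigl(a\cdot(m'_k\otimes 1)\bigr)$ only shows that the composite $\mathbb A\otimes_{\mathcal R}\mathcal M'\to\mathcal M$ takes values in the image of $i_S$; since $i_S$ is not injective, this does not produce a morphism $\mathbb A\otimes_{\mathcal R}\mathcal M'\to\mathcal M'$, and the implicit lift $(a,\sum_k m'_k\otimes s_k)\mapsto \sum_k s_k\cdot\tilde\phi_{m'_k}(a)$ has not been shown to be independent of the chosen representation of the tensor. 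The paper resolves precisely this point by choosing a free presentation $\oplus_I R\overset{q}\to\oplus_J R\overset{p}\to M'\to 0$, defining the lift on the free object $\mathbb A\otimes_{\mathcal R}(\oplus_J\mathcal R)$ as a direct sum of the cyclic lifts, and then killing the relations: the composite with $\mathrm{Id}\otimes q$ vanishes after applying $i$, hence vanishes at $R$-points because $i_R\colon M'\to M$ \emph{is} injective, hence vanishes altogether because morphisms from a functor in $\mathfrak F$ to a quasi-coherent module are determined by their $R$-points.

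For the same reason your argument omits a second necessary verification. Even once the morphism $\mathbb A\otimes_{\mathcal R}\mathcal M'\to\mathcal M'$ is constructed, the module axioms on $\mathcal M'$ (associativity $a(a'm')=(aa')m'$ and $1\cdot m'=m'$) are not inherited from those of $\mathcal M$, because one cannot cancel the non-injective $i$. The paper checks them by lifting the discrepancy $F(a\otimes a')=a(a'm')-(aa')m'$ to $(\mathbb A\otimes_{\mathcal R}\mathbb A)^{**}$ and again invoking determination by $R$-points together with the injectivity of $i_R$. Your argument does go through verbatim when $M'\otimes_R S\to M\otimes_R S$ is injective for all $S$ (e.g.\ over a field, where it reduces to Proposition \ref{P10}), but in the generality claimed both the well-definedness of the lift and the module axioms require the extra work.
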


\begin{proof}
Obviously, if $\mathcal M'$ is an $\mathbb A$-submodule of $\mathcal M$ then $M'$ is
an $\mathbb A(R)$-submodule of $M$. Inversely, let us assume  $M'$ is an $
\mathbb A(R)$-submodule of $M$ and let us consider the natural morphism of
multiplication $\mathbb A \otimes_{\mathcal R} {\mathcal M}' \to {\mathcal M}$. By  Lemma \ref{invqua}, the morphisms $\mathbb A\to \mathcal M$, $a\mapsto a\cdot m'$, for each $m'\in M'$, uniquely factors via $\mathcal M'$. Let $\oplus_IR\overset q\to\oplus_JR\overset p\to M'\to 0$ be an exact sequence. Let $i$ be the morphism $\mathcal M'\to \mathcal M$. There exists a (unique) morphism $f'$ such that the diagram
$$\xymatrix{ \mathbb A\otimes_{\mathcal R} (\oplus_I \mathcal R)
\ar[r]^-{Id\otimes q} & \mathbb A\otimes_{\mathcal R} (\oplus_J \mathcal  R) \ar[r]^-{Id\otimes p} \ar[d]^-{f'} & \mathbb A\otimes_{\mathcal R} \mathcal M' \ar[r] \ar[d] & 0\\ & \mathcal M'\ar[r]_-i & \mathcal M & }$$
is commutative. Since $i\circ f'\circ (Id\otimes q)=0$, then $f'\circ (Id\otimes q)=0$.
Hence, $\mathbb A \otimes_{\mathcal R} {\mathcal M}'\to {\mathcal M}$ factors through $\mathcal M'$.

$F\colon \mathbb A\otimes_{\mathcal R}\mathbb A\to \mathcal M'$, $F(a\otimes a'):=
a(a'm')-(aa')m'$ (for any $m'\in\mathcal M'$) is the zero morphism:
$F$ lifts to a (unique) morphism $\bar F\colon (\mathbb A\otimes_{\mathcal R}\mathbb A)^{**}\to\mathcal M'$. Observe that $i\circ\bar F=0$ because $i\circ F=0$, then $\bar F_R=0$ because
$i_R$ is injective. Finally, $\bar F=0$ because it is determined by $\bar F_R$; and $F=0$.
Likewise, $1\cdot m'=m'$, for all $m'\in\mathcal M'$.

In conclusion, $\mathcal M'$
is a quasi-coherent ${\mathbb A}$-submodule of $\mathcal M$.
\end{proof}

\begin{proposition} \label{invqua2} Let $\mathbb A\in\mathfrak F$ and $\mathbb B$ be functors of $\mathcal R$-algebras and assume that there exists an injective morphism of $\mathcal R$ modules $\mathbb B\hookrightarrow \mathcal N$. Then, any morphism of $\mathcal R$-algebras $\phi\colon \mathbb A\to \mathbb B$ uniquely factors  through an epimorphism of algebras onto the quasi-coherent algebra associated with $\Ima \phi_R$, that is to say, $(\Ima \phi)(\mathcal R)$.\end{proposition}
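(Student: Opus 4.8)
The plan is to first reduce the statement to the module-level factorization of Lemma \ref{invqua}, and then to upgrade the resulting $\mathcal R$-module epimorphism to a morphism of functors of algebras by the double-dual argument already used in Proposition \ref{invqua4}.

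First I would fix the injection $\iota\colon \mathbb B\hookrightarrow \mathcal N$ and set $\psi:=\iota\circ\phi\colon \mathbb A\to\mathcal N$, a morphism of $\mathcal R$-modules into a quasi-coherent module. Since $\phi$ is a morphism of $\mathcal R$-algebras, $\phi_R$ is a morphism of $R$-algebras, so $C:=\Ima\phi_R=(\Ima\phi)(R)$ is an $R$-subalgebra of $\mathbb B(R)$; let $\mathcal C:=(\Ima\phi)(\mathcal R)$ be the associated quasi-coherent $\mathcal R$-algebra. Because $\iota_R$ is injective, $\Ima\psi_R=\iota_R(C)$ is identified with $C$, so Lemma \ref{invqua} applied to $\psi$ yields a \emph{unique} epimorphism of $\mathcal R$-modules $\psi'\colon \mathbb A\to\mathcal C$ together with the quasi-coherent morphism $j\colon \mathcal C\to\mathcal N$ attached to the inclusion $C\hookrightarrow N$, satisfying $j\circ\psi'=\psi$. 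On the other hand, the inclusion $C\hookrightarrow\mathbb B(R)$ defines, via Proposition \ref{tercer}, a morphism $\theta\colon \mathcal C\to\mathbb B$; functoriality of the product of $\mathbb B$ shows $\theta$ is a morphism of functors of algebras, and since $\iota\circ\theta$ and $j$ both correspond to the inclusion $C\hookrightarrow N$ one gets $\iota\circ\theta=j$, whence $\theta\circ\psi'=\phi$ as $\iota$ is a monomorphism.

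Next I would prove that $\psi'$ is multiplicative. Consider the $\mathcal R$-module morphism $G\colon \mathbb A\otimes_{\mathcal R}\mathbb A\to\mathcal C$, $G(a\otimes a'):=\psi'(aa')-\psi'(a)\psi'(a')$. Since $\theta$ is an algebra morphism and $\theta\circ\psi'=\phi$ with $\phi$ multiplicative, one checks $\theta\circ G=0$, hence $j\circ G=\iota\circ\theta\circ G=0$. Now, as $\mathbb A\in\mathfrak F$ we have $\mathbb A^*\in\mathfrak F$ (Lemma \ref{uf}), so $(\mathbb A\otimes\mathbb A)^*=\mathbb Hom_{\mathcal R}(\mathbb A,\mathbb A^*)\in\mathfrak F$ by Theorem \ref{F8.7}; in particular $(\mathbb A\otimes\mathbb A)^*$ is reflexive and $(\mathbb A\otimes\mathbb A)^{**}\in\mathfrak F$. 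By Proposition \ref{3.2}, $G$ factors uniquely as $\bar G\circ \mathrm{nat}$ with $\bar G\colon (\mathbb A\otimes\mathbb A)^{**}\to\mathcal C$, and by the same uniqueness $j\circ\bar G=0$, since it is a lift of $j\circ G=0$. Evaluating at $R$ and using that $j_R$ is injective gives $\bar G_R=0$; since $(\mathbb A\otimes\mathbb A)^{**}\in\mathfrak F$ is D-proquasi-coherent (Proposition \ref{F8.10}) and $\mathcal C$ is quasi-coherent, Theorem \ref{W1} forces $\bar G=0$, hence $G=0$. Preservation of the unit is immediate: morphisms $\mathcal R\to\mathcal C$ are determined by their value at $R$ (Proposition \ref{tercer}), and $\psi'_R=\phi_R$ corestricted to $C$ sends $1\mapsto 1$. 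Thus $\psi'$ is an epimorphism of functors of algebras and $\phi=\theta\circ\psi'$ is the desired factorization.

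Finally, uniqueness is inherited from Lemma \ref{invqua}: any factorization of $\phi$ through an epimorphism of algebras onto $\mathcal C$ is in particular a factorization of $\psi=\iota\circ\phi$ through $j$ as $\mathcal R$-modules, which is unique. \textbf{The main obstacle} is precisely the multiplicativity of $\psi'$: over a general ring $R$ the morphism $\theta\colon \mathcal C\to\mathbb B$ (equivalently $j\colon \mathcal C\to\mathcal N$) need not be a monomorphism, so one cannot simply cancel it; this is what necessitates the passage to $(\mathbb A\otimes\mathbb A)^{**}\in\mathfrak F$ together with the injectivity of $j_R$ and the D-proquasi-coherence of the double dual, exactly as in Proposition \ref{invqua4}.
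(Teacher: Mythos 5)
Your proposal is correct and follows essentially the same route as the paper's proof: Lemma \ref{invqua} supplies the module-level factorization, and multiplicativity is obtained from the unique factorization of morphisms from $\mathbb A\otimes_{\mathcal R}\mathbb A$ to a quasi-coherent module through $(\mathbb A\otimes_{\mathcal R}\mathbb A)^{**}\in\mathfrak F$ together with determination by the value at $R$. The only cosmetic difference is that you show the difference morphism $G$ vanishes (the technique of Proposition \ref{invqua4}), whereas the paper identifies two factorizations of the same morphism through $\mathcal B'$; these are the same argument.
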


\begin{proof} By Lemma \ref{invqua}, the morphism $\phi\colon \mathbb A\to \mathbb B$ uniquely  factors through
an epimorphism $\phi'\colon \mathbb A\to \mathcal B'$, where $B':=\Ima\phi_R$. Obviously $B'$ is an $R$-subalgebra of $\mathbb B(R)$. We have to check that $\phi'$ is a morphism of functors of algebras.

Observe that if a morphism $f\colon \mathbb A\otimes \mathbb A\to \mathcal N$ factors  through an epimorphism onto a quasi-coherent  submodule $\mathcal N'$ of $\mathcal N$ then uniquely  factors  through $\mathcal N'$, because $f$ and any morphism on $\mathcal N'$ uniquely  factors through
$(\mathbb A\otimes \mathbb A)^{**}\in\mathfrak F$.

Consider the diagram

$$\xymatrix{\mathbb A\otimes\mathbb A \ar[d]^-{m_{\mathbb A}} \ar[r]^-{\phi'\otimes\phi'}
& \mathcal B'\otimes\mathcal B' \ar[d]^-{m_{\mathcal B'}}\ar[r]^-{i\otimes i}
& \mathbb B\otimes\mathbb B \ar[d]^-{m_{\mathbb B}} & \\ \mathbb A \ar[r]^-{\phi'} & \mathcal B' \ar[r]^-i & \mathbb B \ar@{^{(}->}[r] & \mathcal N,}$$
where $m_{\mathbb A}, m_{\mathcal B'}$ and $m_{\mathbb B}$ are the multiplication morphisms and $i$ is the morphism induced by the morphism $B'\to \mathbb B(R)$.
We know $m_{\mathbb B}\circ (i\otimes i)\circ (\phi'\otimes \phi')=i\circ \phi'\circ m_{\mathbb A}$. The morphism $m_{\mathbb B}\circ (i\otimes i)\circ (\phi'\otimes \phi')$ uniquely  factors  onto $\mathcal B'$, more concretely, through $m_{\mathcal B'}\circ (\phi'\otimes\phi')$.
The morphism $i\circ \phi'\circ m_{\mathbb A}$ uniquely  factors  onto $\mathcal B'$, effectively, through $\phi'\circ m_{\mathbb A}$. Then, $m_{\mathcal B'}\circ (\phi'\otimes\phi')=\phi'\circ m_{\mathbb A}$ and $\phi'$ is a morphism of $\mathcal R$-algebras.

\end{proof}

\begin{definition} \label{proquasi} We will say that a functor of $\mathcal R$-algebras  is a functor of proquasi-coherent algebras if it is the inverse limit of its quasi-coherent algebra quotients.\end{definition}

%If $\mathbb A\in \mathfrak F$ is a functor of proquasi-coherent algebras
%then it is the inverse limit of its quasi-coherent algebra quotients: Write
%$\mathbb A=\plim{i} \mathcal A_i$ and let $\mathcal A'_i$ be the image of the natural morphism $\mathbb A\to \mathcal A_i$. Obviously, $\mathbb A=\plim{i} \mathcal A'_i$.

\begin{examples} \label{ejemplor} Quasi-coherent algebras are proquasi-coherent.

Let $R=K$ be a field, $A$ be a commutative $K$-algebra and $I\subseteq A$ be an ideal. Then, $\mathbb B=\plim{n\in\mathbb N}\mathcal A/\mathcal I^n\in\mathfrak F$ (by \ref{cansado}) and it is a proquasi-coherent algebra: $\mathbb B\simeq
\prod_n \mathcal I^n/\mathcal I^{n+1}$. Then, $\mathbb B^*=\oplus_n (\mathcal I^n/\mathcal I^{n+1})^*=\ilim{n} (\mathcal A/\mathcal I^n)^*$. Therefore, $\mathbb B^*$ is equal to the direct limit of the dual of the quasi-coherent algebra quotients of $\mathbb B$. Dually, $\mathbb B$ is a proquasi-coherent algebra.

%Let $E$ be an infinite dimensional $K$-vector space. $\mathbb A=\mathbb End_{\mathcal K}\mathcal E\in\mathfrak F$ and it is a proquasi-coherent $\mathcal K$-module. $\mathbb A$ is not a functor of proquasi-coherent algebras: Let $f\colon \mathbb A\to \mathcal B$ be a surjective morphism of functor of algebras. Since $\mathbb A=\prod_I\mathcal E$, $f$ factors via
%a projection to a finite product of $\mathcal E$, by Proposition \ref{refpro}. Since $\mathcal E$ is a  simple $\mathbb A$-module, $B$ is a finite sum of $B$-modules $E$. That is, $B$ is a simple ring and it is a matrix algebra over $\End_BE=\End_{\mathbb A} \mathcal E=K$. Then, $B=\End_KE$ and it is is a finite sum of $B$-modules $E$, which is contradictory.

\end{examples}

\begin{proposition} \label{5.9} Let $\mathcal C^*\in\mathfrak F$ be a functor of $\mathcal R$-algebras (i.e., a scheme of $\mathcal R$-algebras). Then, $\mathcal C^*$ is a functor of proquasi-coherent algebras.\end{proposition}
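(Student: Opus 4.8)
The plan is to reduce the statement to Theorem \ref{FP2} and then compare quasi-coherent \emph{module} quotients with quasi-coherent \emph{algebra} quotients. Since $\mathcal C^*\in\mathfrak F$, Theorem \ref{FP2} already gives $\mathcal C^*=\plim{i\in I}\mathcal M_i$, where $\{\mathcal M_i\}_{i\in I}$ is the set of all quasi-coherent quotients of $\mathcal C^*$ as a functor of modules. The quasi-coherent algebra quotients $\{\mathcal A_j\}$ form a subsystem of this inverse system, so there is a canonical morphism $\mathcal C^*\to\plim{j}\mathcal A_j$, and what must be shown is that it is an isomorphism; by Definition \ref{proquasi} this is exactly the assertion that $\mathcal C^*$ is a functor of proquasi-coherent algebras.

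First I would reduce to a cofinality statement: if every quasi-coherent module quotient $\mathcal M_i$ is dominated by a quasi-coherent algebra quotient, meaning the projection $\mathcal C^*\to\mathcal M_i$ factors as $\mathcal C^*\to\mathcal A\to\mathcal M_i$ with $\mathcal A$ a quasi-coherent algebra quotient, then the algebra quotients are cofinal among the module quotients and the two inverse limits coincide, yielding the result. By Proposition \ref{invqua2} the quasi-coherent algebra quotients of $\mathcal C^*$ are precisely the quasi-coherent algebras attached to the quotient $R$-algebras of $\mathcal C^*(R)=C^*$, whereas by Lemma \ref{invqua} the quasi-coherent module quotients are attached to the quotient $R$-modules of $C^*$ that arise as images $\Ima\phi_R$. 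So the problem becomes: given a module-quotient kernel $K_0\subseteq C^*$, refine it to a two-sided ideal contained in it whose quotient algebra is again one of these quasi-coherent algebra quotients.

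To build the refinement I would exploit the algebra structure of $\mathcal C^*$ dually. Since $\mathcal C^*$ is reflexive, dualizing the multiplication $m\colon\mathcal C^*\otimes_{\mathcal R}\mathcal C^*\to\mathcal C^*$ and using $(\mathcal C^*\otimes_{\mathcal R}\mathcal C^*)^*=\mathbb Hom_{\mathcal R}(\mathcal C^*,\mathcal C)=\mathcal C\otimes_{\mathcal R}\mathcal C$ (Proposition \ref{prop4}) endows $C$ with an $R$-coalgebra structure, under which the quasi-coherent algebra quotients of $\mathcal C^*$ correspond to subcoalgebras of $C$. A given quasi-coherent module quotient dualizes to an inclusion $\mathcal M_i^*\hookrightarrow\mathcal C^{**}=\mathcal C$ singling out a submodule of $C$; I would then take the subcoalgebra it generates, which produces an algebra quotient refining $\mathcal M_i$. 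The exhaustion $\mathcal C=\ilim{i}\mathcal M_i^*$ coming from Theorem \ref{FP2} would then deliver the required cofinality.

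The main obstacle is precisely this last construction: one must show that the subcoalgebra generated by the submodule attached to a quasi-coherent quotient again gives a \emph{quasi-coherent} algebra quotient, equivalently that the largest two-sided ideal contained in a module-quotient kernel has a quasi-coherent quotient algebra. Over a field this is the fundamental theorem of coalgebras (every finite-dimensional subspace lies in a finite-dimensional subcoalgebra), which keeps the generated subcoalgebra small enough; over a general commutative ring $R$ the needed finiteness must instead be extracted from membership in $\mathfrak F$ together with the D-proquasi-coherence of $\mathcal C^*$ (Proposition \ref{F8.10}), using Lemma \ref{invqua} to guarantee that the refined quotient is determined by its $R$-points and is therefore quasi-coherent. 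This verification, rather than the formal cofinality bookkeeping, is where I expect the real work to lie.
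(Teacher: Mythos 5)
Your high-level strategy is the right one and matches the paper's: dualize, exhaust $\mathcal C$ by submodules that are duals of quasi-coherent algebra quotients of $\mathcal C^*$, and conclude by cofinality. But the proposal stops exactly at the step you yourself identify as ``where the real work lies'': you never actually construct, for a given small piece of $C$, a quasi-coherent algebra quotient of $\mathcal C^*$ dominating it, and you do not prove the finiteness that makes the generated subcoalgebra ``small enough'' over a general commutative ring $R$. As written, this is a plan with the central lemma left as a conjecture, so there is a genuine gap.

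The paper fills that gap with three concrete ingredients you would need to supply. First, a finiteness statement: if $\mathcal M^*\in\mathfrak F$ and $f\colon \mathcal M^*\to\mathcal N$ is a morphism of functors of $\mathcal R$-modules, then $\Ima f_R$ is a finitely generated $R$-module, because by Lemma \ref{invqua} $f$ factors through an epimorphism onto $\mathcal N'$ with $N'=\Ima f_R$, and $\Hom_{\mathcal R}(\mathcal M^*,\mathcal N')=M\otimes_R N'$ (Proposition \ref{prop4}) forces $f$ to be a finite tensor $\sum_i m_i\otimes n_i$, whence $N'=\langle n_1,\ldots,n_r\rangle$. Second, the stabilization step: for a finitely generated $N=\langle n_1,\ldots,n_r\rangle\subset C$ one takes $N':=\Ima f_R$ for $f\colon \mathcal C^{*r}\to\mathcal C$, $(w_i)\mapsto\sum_i w_i\cdot n_i$; by the first point $N'$ is finitely generated, and by Proposition \ref{invqua4} $\mathcal N'$ is a quasi-coherent $\mathcal C^*$-submodule of $\mathcal C$ (this is the correct substitute for ``the subcoalgebra generated by $N$''; note the paper uses the one-sided module structure of $\mathcal C$ over $\mathcal C^*$, not the largest two-sided ideal inside a module-quotient kernel). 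Third, producing the algebra quotient: $\mathbb End_{\mathcal R}(\mathcal N')$ embeds into $\oplus^s\mathcal N'$, so Proposition \ref{invqua2} applies to the action map $\mathcal C^*\to\mathbb End_{\mathcal R}(\mathcal N')$ and yields a quasi-coherent algebra quotient $\mathcal B'$, after which a duality computation shows $N'\subseteq B'^*$. Without these steps (especially the first, which is the ring-theoretic replacement for the fundamental theorem of coalgebras that you correctly flag), the cofinality you need is not established.
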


\begin{proof} 1. If $\mathcal M^*\in\mathfrak F$ and $f\colon \mathcal M^*\to \mathcal N$ is a morphism of functors of $\mathcal R$-modules, then $N':=\Ima f_R$ is a finitely generated $R$-module:  By Lemma \ref{invqua}, $f$ factors via an epimorphism
$f'\colon  \mathcal M^*\to \mathcal N'$. $\Hom_{\mathcal R}(\mathcal M^*,\mathcal N')=M\otimes N'$, then $f'=m_1\otimes n_1+\cdots+m_r\otimes n_r$, for some $m_i\in M$ and $n_i\in N'$. Hence,
$f(w)=\sum_i w(m_i)\cdot n_i$, for all $w\in\mathcal M^*$. Therefore, $N'=\langle n_1,\ldots,n_r\rangle$.

2. $\mathcal C^*$ is a left and right $\mathcal C^*$-module, then $\mathcal C$ is a right and left $\mathcal C^*$-module. Given $c\in C$, the dual morphism of the morphism
$\mathcal C^*\to \mathcal C$, $w\mapsto w\cdot c$ is the morphism $\mathcal C^*\to \mathcal C$, $w\mapsto c\cdot w$.

3. $C$ is the direct limit of its finitely generated $R$-submodules. Let $N=\langle n_1,\ldots, n_r\rangle\subset C$ be a finitely generated  $R$-module  and let $f\colon \mathcal C^{*r}\to \mathcal N$ be defined by $f((w_i)):=\sum_i w_i\cdot n_i$. Then,
$N':=\Ima f_R$ is a finitely generated $R$-module.
By Proposition \ref{invqua4}, $\mathcal N'$ is a quasi-coherent $\mathcal C^*$-submodule of $\mathcal C$.
Write $N'=\langle n_1,\ldots, n_s\rangle$. The morphism $\mathbb End_{\mathcal R}(\mathcal N')\to \oplus^s\mathcal N'$, $g\mapsto (g(n_i))_i$ is injective. By Proposition \ref{invqua2}, the morphism of functors of $\mathcal R$-algebras $\mathcal C^*\to \mathbb End_{\mathcal R}(\mathcal N')$ $w\mapsto w\cdot$  factors through an epimorphism onto a
quasi-coherent algebra, $\mathcal B'$. The dual morphism of the composite morphism
$$\xymatrix{\mathcal C^* \ar@{->>}[r] & \mathcal B'\ar[r] & \mathbb End_{\mathcal R}(\mathcal N') \ar@{^{(}->}[r] & \oplus^s \mathcal N' \ar[r] &  \oplus^s \mathcal C\ar[r]^-{\pi_i} \ar[r] &\mathcal C\\ w \ar[rrrrr] &&&&& w\cdot n_i}$$
is $\mathcal C^*\to {\mathcal B'}^* \hookrightarrow \mathcal C$, $w\mapsto n_i\cdot w$.
Hence, $n_i\in {B'}^*$, for all $i$, and $N'\subseteq B'^*$.  Therefore, $\mathcal C$ is equal to the direct limit of the dual
functors of the quasi-coherent algebra quotients of $\mathcal C^*$. Dually,
$\mathcal C^*$ is a functor of proquasi-coherent algebras.

\end{proof}

\begin{lemma} \label{harto} Let $\mathbb M_1,\ldots,\mathbb M_n\in\mathfrak F$ and let us consider the natural morphism $\mathbb M_1\otimes\cdots\otimes\mathbb M_n\to (\mathbb M_1^*\otimes\cdots\otimes\mathbb M_n^*)^*$. Then, the induced morphism

$$(\mathbb M_1\otimes\cdots\otimes\mathbb M_n)^{**}\to (\mathbb M_1^*\otimes\cdots\otimes\mathbb M_n^*)^*$$ is injective.\end{lemma}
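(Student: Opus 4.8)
The plan is to recognize the displayed arrow as the dual of the canonical pairing map, and then to squeeze everything between a direct sum and a product of copies of $\mathcal R$, where Lemma \ref{uf} does all the work. Throughout write $\mathbb M=\mathbb M_1\otimes\cdots\otimes\mathbb M_n$ and $\mathbb N=\mathbb M_1^*\otimes\cdots\otimes\mathbb M_n^*$.

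First I would unwind the natural morphism. The canonical map $c\colon \mathbb N\to \mathbb M^*$, sending $w_1\otimes\cdots\otimes w_n$ to the functional $m_1\otimes\cdots\otimes m_n\mapsto w_1(m_1)\cdots w_n(m_n)$, is one adjoint of the pairing $\mathbb M\otimes\mathbb N\to\mathcal R$; the other adjoint $\mathbb M\to\mathbb N^*$ is exactly the natural morphism of the statement. Since $c^*\colon \mathbb M^{**}\to\mathbb N^*$ restricts along $\mathbb M\to\mathbb M^{**}$ to this adjoint, the induced morphism $(\mathbb M_1\otimes\cdots\otimes\mathbb M_n)^{**}\to(\mathbb M_1^*\otimes\cdots\otimes\mathbb M_n^*)^*$ is precisely $c^*$, and the goal becomes: prove that $c^*$ is injective.

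Second, I would produce the sandwich. For each $k$, membership $\mathbb M_k\in\mathfrak F$ gives a set $J_k$ with $\oplus_{J_k}\mathcal R\subseteq\mathbb M_k\subseteq\prod_{J_k}\mathcal R$, and Lemma \ref{uf} gives the same sandwich for $\mathbb M_k^*$. Writing $\mathbb M^*=\mathbb Hom_{\mathcal R}(\mathbb M_1,(\mathbb M_2\otimes\cdots\otimes\mathbb M_n)^*)$ and inducting on $n$ via Theorem \ref{F8.7}, one gets $\mathbb M^*\in\mathfrak F$; moreover, combining Equation \ref{EQ1} (with the reflexive $\mathbb M_1$) with $\oplus_{J_1}\mathcal R\subseteq\mathbb M_1^*\subseteq\prod_{J_1}\mathcal R$ yields, with $L:=J_1\times\cdots\times J_n$, a chain of $\prod_L\mathcal R$-module inclusions
$$\oplus_L\mathcal R\overset{c_0}{\longrightarrow}\mathbb M^*\subseteq\prod_L\mathcal R$$
exactly as in Equation \ref{EQ2}, where $c_0$ sends the basis vector indexed by $(j_1,\ldots,j_n)$ to the coordinate functional $m_1\otimes\cdots\otimes m_n\mapsto e_{j_1}(m_1)\cdots e_{j_n}(m_n)$. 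Now applying Lemma \ref{uf} to the dual functor $\mathbb M^*$, with all the constituent modules taken to be $\mathcal R$, yields $\oplus_L\mathcal R\subseteq\mathbb M^{**}\subseteq\prod_L\mathcal R$, in which the inclusion $\mathbb M^{**}\hookrightarrow\prod_L\mathcal R$ is the dual $c_0^*$ of the lower inclusion $c_0$. Being an inclusion, $c_0^*$ is injective. Finally $c_0$ factors as $c_0=c\circ\iota$, where $\iota\colon \oplus_L\mathcal R=\oplus_{J_1}\mathcal R\otimes\cdots\otimes\oplus_{J_n}\mathcal R\to\mathbb N$ is induced by the inclusions $\oplus_{J_k}\mathcal R\subseteq\mathbb M_k^*$; hence $c_0^*=\iota^*\circ c^*$, and injectivity of $c_0^*$ forces injectivity of $c^*$, which is the assertion.

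The main obstacle I anticipate is bookkeeping rather than conceptual difficulty: one must check that the lower inclusion $c_0$ produced by iterating Equation \ref{EQ1} genuinely is the coordinate-functional map, and that, after dualizing via Lemma \ref{uf}, it reappears as the claimed inclusion $\mathbb M^{**}\hookrightarrow\prod_L\mathcal R$. Matching these two descriptions of the same arrow consistently across the induction on $n$ is the delicate point; once the index set $L=J_1\times\cdots\times J_n$ and the maps $c_0$, $\iota$, $c$ are identified, injectivity of $c^*$ is automatic from the sandwich.
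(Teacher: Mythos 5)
Your proposal is correct and follows essentially the same route as the paper: both arguments induct on $n$ to sandwich $(\mathbb M_1\otimes\cdots\otimes\mathbb M_n)^*$ between $\oplus_{J_1\times\cdots\times J_n}\mathcal R$ and $\prod_{J_1\times\cdots\times J_n}\mathcal R$ via Equation \ref{EQ2}, then apply Lemma \ref{uf} to embed the double dual into $\prod_{J_1\times\cdots\times J_n}\mathcal R$ and conclude injectivity. Your only deviation is organizational: you factor the known injection $c_0^*$ as $\iota^*\circ c^*$ to force injectivity of $c^*$, whereas the paper embeds both $(\mathbb M_1\otimes\cdots\otimes\mathbb M_n)^{**}$ and $(\mathbb M_1^*\otimes\cdots\otimes\mathbb M_n^*)^*$ into the same product and reads off the conclusion; this is a presentational difference, not a mathematical one.
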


\begin{proof} Let us follow the notations $\oplus_{J_i}\mathcal R\subseteq \mathbb M_i\subseteq \prod_{J_i}\mathcal R$.
By Lemma \ref{uf}, $\oplus_{J_i}\mathcal R\subseteq \mathbb M_i^*\subseteq \prod_{J_i}\mathcal R$. By induction  hypothesis, $\oplus_{J_1\times\cdots\times J_{n-1}}\mathcal R\subseteq (\mathbb M_1\otimes\cdots\otimes\mathbb M_{n-1})^{*}\subseteq \prod_{J_1\times\cdots\times J_{n-1}}\mathcal R$.
Since
$$(\mathbb M_1\otimes\cdots\otimes\mathbb M_n)^*=\mathbb Hom_{\mathcal R}(\mathbb M_1\otimes\cdots\otimes\mathbb M_{n-1},\mathbb M_n^*)=\mathbb Hom_{\mathcal R}((\mathbb M_1\otimes\cdots\otimes\mathbb M_{n-1})^{**},\mathbb M_n^*),$$
by Equation \ref{EQ2}, $\oplus_{J_1\times\cdots\times J_n}\mathcal R\subseteq (\mathbb M_1\otimes\cdots\otimes\mathbb M_n)^*\subseteq \prod_{J_1\times\cdots\times J_n}\mathcal R$.
Hence, firstly  $(\mathbb M_1\otimes\cdots\otimes\mathbb M_n)^{**}\subseteq \prod_{J_1\times\cdots\times J_n}\mathcal R$, by Lemma \ref{uf}, secondly
$(\mathbb M_1^*\otimes\cdots\otimes\mathbb M_n^*)^*\subseteq \prod_{J_1\times\cdots\times J_n}\mathcal R$.

As a consequence, the natural morphism
$(\mathbb M_1\otimes\cdots\otimes\mathbb M_n)^{**}\to (\mathbb M_1^*\otimes\cdots\otimes\mathbb M_n^*)^*$ is injective.

\end{proof}

\begin{theorem} \label{prodspec} Let $\mathbb A,\mathbb B\in\mathfrak F$ be two functors of proquasi-coherent algebras. Then,
$(\mathbb A^*\otimes \mathbb B^*)^*\in \mathfrak F$ is a functor of proquasi-coherent algebras and it holds
$$\Hom_{\mathcal R-alg}(\mathbb A\otimes \mathbb B,\mathbb C)=\Hom_{\mathcal R-alg}((\mathbb A^*\otimes \mathbb B^*)^*,\mathbb C)$$
for every functor of proquasi-coherent algebras $\mathbb C$.

\end{theorem}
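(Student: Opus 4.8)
The plan is to reduce everything to the quasi-coherent algebra quotients of $\mathbb A$ and $\mathbb B$, and to read off the membership in $\mathfrak F$, the algebra structure, the proquasi-coherence, and the universal property from a single inverse-limit presentation of $(\mathbb A^*\otimes\mathbb B^*)^*$. First I would note that $\mathbb A^*,\mathbb B^*\in\mathfrak F$ by Lemma \ref{uf}, and that since $\mathbb B=\mathbb B^{**}$ one has $(\mathbb A^*\otimes\mathbb B^*)^*=\mathbb Hom_{\mathcal R}(\mathbb A^*,\mathbb B)$, which lies in $\mathfrak F$ by Theorem \ref{F8.7}; this already settles the membership assertion. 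Next, writing $\mathbb A=\plim{i}\mathcal A_i$ and $\mathbb B=\plim{j}\mathcal B_j$ as the inverse limits of their quasi-coherent algebra quotients, I would dualize (Theorem \ref{FP2}) to get $\mathbb A^*=\ilim{i}\mathcal A_i^*$ and $\mathbb B^*=\ilim{j}\mathcal B_j^*$, tensor (tensor products commute with direct limits) to obtain $\mathbb A^*\otimes\mathbb B^*=\ilim{i,j}(\mathcal A_i^*\otimes\mathcal B_j^*)$, and dualize once more. Since the dual of a direct limit is the inverse limit of the duals, and since Proposition \ref{prop4} together with $\mathcal B_j^{**}=\mathcal B_j$ (Theorem \ref{reflex}) gives $(\mathcal A_i^*\otimes\mathcal B_j^*)^*=\mathbb Hom_{\mathcal R}(\mathcal A_i^*,\mathcal B_j)=\mathcal A_i\otimes\mathcal B_j=\widetilde{A_i\otimes_R B_j}$, I arrive at the presentation
$$(\mathbb A^*\otimes\mathbb B^*)^*=\plim{i,j}\widetilde{A_i\otimes_R B_j}.$$

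This presentation exhibits $(\mathbb A^*\otimes\mathbb B^*)^*$ as an inverse limit of the quasi-coherent $R$-algebras $\widetilde{A_i\otimes_R B_j}$, hence as a functor of $\mathcal R$-algebras, and it shows that the natural morphism $\mathbb A\otimes\mathbb B\to(\mathbb A^*\otimes\mathbb B^*)^*$ is a morphism of algebras, being the inverse limit of the algebra morphisms $\mathbb A\otimes\mathbb B\to\mathcal A_i\otimes\mathcal B_j$. To obtain proquasi-coherence I would apply Proposition \ref{invqua2} to each projection $(\mathbb A^*\otimes\mathbb B^*)^*\to\widetilde{A_i\otimes_R B_j}$: it factors as an epimorphism of functors of algebras onto the quasi-coherent algebra $\widetilde{C_{ij}}$ attached to the image subalgebra $C_{ij}\subseteq A_i\otimes_R B_j$. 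These $\widetilde{C_{ij}}$ are genuine quasi-coherent algebra quotients, they are cofinal, and a short diagram chase (using that the $C_{ij}\hookrightarrow A_i\otimes_R B_j$ are monomorphisms) shows $\plim{i,j}\widetilde{C_{ij}}=\plim{i,j}\widetilde{A_i\otimes_R B_j}$; hence $(\mathbb A^*\otimes\mathbb B^*)^*$ is the inverse limit of its quasi-coherent algebra quotients, as required by Definition \ref{proquasi}.

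For the functorial identity I would fix a functor of proquasi-coherent algebras $\mathbb C=\plim{k}\mathcal C_k$ and compute both sides as iterated limits of ordinary $R$-algebra Hom-sets. On one side, using that $\Hom$ into an inverse limit is the inverse limit of the $\Hom$'s, that a morphism of algebras from an object of $\mathfrak F$ to a quasi-coherent algebra factors through a quasi-coherent algebra quotient (Lemma \ref{invqua} and Proposition \ref{invqua2}), and that $\Hom_{\mathcal R-alg}(\widetilde{A_i\otimes_R B_j},\mathcal C_k)=\Hom_{R-alg}(A_i\otimes_R B_j,C_k)$, the presentation above yields
$$\Hom_{\mathcal R-alg}((\mathbb A^*\otimes\mathbb B^*)^*,\mathbb C)=\plim{k}\ilim{i,j}\Hom_{R-alg}(A_i\otimes_R B_j,C_k).$$
On the other side, using the universal property of the tensor product of algebras together with the fact that morphisms $\mathbb A\to\mathcal C_k$ and $\mathbb B\to\mathcal C_k$ factor through $\mathcal A_i$ and $\mathcal B_j$ respectively (Lemma \ref{invqua}), I get
$$\Hom_{\mathcal R-alg}(\mathbb A\otimes\mathbb B,\mathbb C)=\plim{k}\ilim{i,j}\Hom_{R-alg}(A_i\otimes_R B_j,C_k).$$
The two expressions coincide, and tracing the construction shows the resulting bijection is precisely the one induced by the natural morphism $\mathbb A\otimes\mathbb B\to(\mathbb A^*\otimes\mathbb B^*)^*$, which is what the statement asserts.

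The limit/colimit manipulations and the quasi-coherent identifications are routine; I expect the main obstacle to be the second paragraph, namely verifying that the $\widetilde{A_i\otimes_R B_j}$, or rather their image quotients $\widetilde{C_{ij}}$, genuinely are cofinal quasi-coherent algebra quotients, since the projections out of an inverse limit need not be surjective on $R$-points. The passage to the image subalgebras via Proposition \ref{invqua2}, mirroring the computation in Example \ref{ejemplor}, is exactly what repairs this. A secondary point requiring care is checking that each factorization invoked in the last paragraph respects the algebra structures and is compatible with the natural morphism, so that the final equality is the asserted functorial one and not merely an abstract bijection of sets.
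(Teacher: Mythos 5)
Your first paragraph and the surjectivity half of your last paragraph coincide with the paper's proof: membership in $\mathfrak F$ via Theorem \ref{F8.7} applied to $\mathbb Hom_{\mathcal R}(\mathbb A^*,\mathbb B)$, the presentation $(\mathbb A^*\otimes\mathbb B^*)^*=\plim{i,j}(\mathcal A_i\otimes\mathcal B_j)$ by the same duality computation, and the factorization of a morphism $\mathbb A\otimes\mathbb B\to\mathcal C$ through some $\mathcal A_i\otimes\mathcal B_j$ by restricting to $\mathbb A\otimes 1$ and $1\otimes\mathbb B$. Incidentally, the obstacle you single out in your final paragraph is not really there: the composite $\mathbb A\otimes\mathbb B\to(\mathbb A^*\otimes\mathbb B^*)^*\to\mathcal A_i\otimes\mathcal B_j$ is the canonical morphism, which is an epimorphism because $\mathbb A\to\mathcal A_i$ and $\mathbb B\to\mathcal B_j$ are, so the projections are already surjective and the detour through the image subalgebras $C_{ij}$ is unnecessary.

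The genuine gap is elsewhere: everything in your second and third paragraphs beyond surjectivity rests on the claim that the $\mathcal A_i\otimes\mathcal B_j$ are \emph{cofinal} among the quasi-coherent algebra quotients of $(\mathbb A^*\otimes\mathbb B^*)^*$, i.e.\ that every morphism of functors of algebras $f\colon(\mathbb A^*\otimes\mathbb B^*)^*\to\mathcal C$ factors through some projection $\pi_{ij}$. You assert this ("they are cofinal") and use it again when you write $\Hom_{\mathcal R-alg}((\mathbb A^*\otimes\mathbb B^*)^*,\mathbb C)=\plim{k}\ilim{i,j}\Hom_{R-alg}(A_i\otimes_RB_j,C_k)$, but Proposition \ref{invqua2} only factors $f$ through the quasi-coherent algebra associated with $\Ima f_R$, not through one of the $\mathcal A_i\otimes\mathcal B_j$; and knowing that $f_{|\mathbb A\otimes\mathbb B}$ factors as $g\circ\pi_{ij}$ identifies $f$ with $g\circ\pi_{ij}$ only if a morphism out of $(\mathbb A^*\otimes\mathbb B^*)^*$ into a quasi-coherent algebra is determined by its restriction to $\mathbb A\otimes\mathbb B$ --- which is exactly the injectivity half of the bijection you are trying to prove. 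Without it you obtain only that the restriction map on $\Hom_{\mathcal R-alg}$ is surjective and that $(\mathbb A^*\otimes\mathbb B^*)^*$ is a retract of the inverse limit of its quasi-coherent algebra quotients. The paper closes precisely this hole with Lemma \ref{harto}: the natural morphism $(\mathbb A\otimes\mathbb B)^{**}\to(\mathbb A^*\otimes\mathbb B^*)^*$ is injective, whence $\Hom_{\mathcal R}((\mathbb A^*\otimes\mathbb B^*)^*,\mathcal C)=\Hom_{\mathcal R}(\mathcal C^*,(\mathbb A^*\otimes\mathbb B^*)^{**})\subseteq\Hom_{\mathcal R}(\mathcal C^*,(\mathbb A\otimes\mathbb B)^*)=\Hom_{\mathcal R}(\mathbb A\otimes\mathbb B,\mathcal C)$; injectivity is established first, and both the cofinality and the proquasi-coherence are then deduced from it. Your argument needs this ingredient (or an equivalent) to close.
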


\begin{proof} Write $\mathbb A=\plim{i}\mathcal A_i$ and $\mathbb B=\plim{j}\mathcal B_j$. Observe that
$$\aligned(\mathbb A^* \otimes \mathbb B^*)^* & =\mathbb Hom_{\mathcal R}(\mathbb A^*,\mathbb B)\overset{\text{\ref{3.2}}}=
\mathbb Hom_{\mathcal R}(\ilim{i} \mathcal A_i^*,\mathbb B)=
\mathbb Hom_{\mathcal R}(\ilim{i} \mathcal A_i^*,\plim{j} \mathcal B_j)\\ & =\plim{i,j}
\mathbb Hom_{\mathcal R}( \mathcal A_i^*,\mathcal B_j)=
\plim{i,j} (\mathcal A_i\otimes \mathcal B_j)\endaligned$$
Then, $(\mathbb A^*\otimes \mathbb B^*)^*$ is a functor of algebras and
the natural morphism $\mathbb A\otimes \mathbb B\to (\mathbb A^*\otimes \mathbb B^*)^*$
is a morphism of functors of algebras.

Given a morphism of functor of $\mathcal R$-algebras $\phi\colon \mathbb A\otimes\mathbb B\to \mathcal C$, let $\phi_1=\phi_{|\mathbb A\otimes 1}$ and $\phi_2=\phi_{|1\otimes\mathbb B}$. Then, $\phi_1$ factors through an epimorphism onto  a quasi-coherent algebra quotient $\mathcal A_i$ of $\mathbb A$, and  $\phi_2$ factors through an epimorphism onto a quasi-coherent algebra quotient $\mathcal B_j$ of $\mathbb B$. Then, $\phi$ factors  through $\mathcal A_i\otimes\mathcal B_j$, and $\phi$ factors through $(\mathbb A^*\otimes \mathbb B^*)^*$.
Then, $$\Hom_{\mathcal R-alg}((\mathbb A^*\otimes \mathbb B^*)^*,\mathcal C)\to\Hom_{\mathcal R-alg}(\mathbb A\otimes \mathbb B,\mathcal C)$$
is surjective. It is also injective, because
$$\aligned \Hom_{\mathcal R}((\mathbb A^*\otimes \mathbb B^*)^*,\mathcal C)& =\Hom_{\mathcal R}(\mathcal C^*,(\mathbb A^*\otimes \mathbb B^*)^{**})\\ & \overset{\text{\ref{harto}}}\subseteq \Hom_{\mathcal R}(\mathcal C^*,(\mathbb A\otimes \mathbb B)^*)=\Hom_{\mathcal R}(\mathbb A\otimes \mathbb B,\mathcal C)\endaligned$$

Then, $\Hom_{\mathcal R-alg}(\mathbb A\otimes \mathbb B,\mathbb C)=\Hom_{\mathcal R-alg}((\mathbb A^*\otimes \mathbb B^*)^*,\mathbb C)$
for every proquasi-coherent algebra $\mathbb C$.

A morphism of functors of algebras $f\colon (\mathbb A^*\otimes \mathbb B^*)^*\to\mathcal C$ factors through some $\mathcal A_i\otimes\mathcal B_j$ because
$f_{|\mathbb A\otimes\mathbb B}$ factors through some $\mathcal A_i\otimes\mathcal B_j$. Then, the inverse limit of the quasi-coherent algebra quotients of
$(\mathbb A^*\otimes \mathbb B^*)^*$ is equal to $\plim{i,j} (\mathcal A_i\otimes\mathcal B_j)= (\mathbb A^*\otimes \mathbb B^*)^*$, that is,
$(\mathbb A^*\otimes \mathbb B^*)^*$ is a proquasi-coherent algebra.

\end{proof}

\begin{notation} Let $\mathbb A_1,\ldots,\mathbb A_n\in\mathfrak F$ be functors of proquasi-coherent $\mathcal R$-algebras.
 We denote $\mathbb A_1\tilde\otimes\cdots\tilde\otimes\mathbb A_n:=(\mathbb A_1^*\otimes\cdots\otimes\mathbb A_n^*)^{*},$
which is the closure of functors of proquasi-coherent algebras of $\mathbb A_1\otimes\cdots\otimes\mathbb A_n$.\end{notation}
Observe that $\mathbb A_1\tilde\otimes(\mathbb A_2\tilde\otimes\cdots\tilde\otimes\mathbb A_n)=\mathbb A_1\tilde\otimes\cdots\tilde\otimes\mathbb A_n$.

\section{Functors of bialgebras}

\begin{definition} \label{bialgebras} A functor  $\mathbb B\in \mathfrak F$
is said to be a functor of proquasi-coherent bialgebras when $\mathbb B$ and
$\mathbb B^*$ are functors of  proquasi-coherent $\mathcal R$-algebras such that the dual morphisms of the multiplication morphism $m\colon \mathbb B^*\otimes \mathbb B^*\to \mathbb B^*$ and the unit morphism $u\colon \mathcal R\to \mathbb B^*$ are morphisms of functors of $\mathcal R$-algebras.

Let $\mathbb B,\mathbb B'$ be two functors of proquasi-coherent bialgebras. We will say that a morphism of $\mathcal R$-modules, $f\colon \mathbb B\to\mathbb B'$ is a morphism of functors of bialgebras if $f$ and $f^* \colon {\mathbb B'}^*\to\mathbb B^*$ are morphisms of functors of $\mathcal R$-algebras.

\end{definition}

\begin{note} Let $\mathbb B\in\mathfrak F$ be a functor of proquasi-coherent algebras.
Defining a multiplication morphism $\mathbb B^*\otimes \mathbb B^*\to \mathbb B^*$
(associative and with a unit) is equivalent to defining a comultiplication morphism
$\mathbb B\to \mathbb B\tilde\otimes\mathbb B$ (coassociative and with a counit), because
$$\Hom_{\mathcal R}(\mathbb B^*\otimes\overset n\cdots\otimes \mathbb B^*,\mathbb B^*)=
\Hom_{\mathcal R}(\mathbb B,(\mathbb B^*\otimes\overset n\cdots\otimes \mathbb B^*)^*)
=\Hom_{\mathcal R}(\mathbb B,\mathbb B\tilde\otimes\overset n\cdots\tilde\otimes \mathbb B)$$

\end{note}

In the literature, an $R$-algebra $A$ is said to be a bialgebra if it is a coalgebra (with counit) and the comultiplication  $c \colon A\to A\otimes_R A$ and the counit $e\colon A\to R$ are  morphisms of $R$-algebras. We will say that a bialgebra $A$ is a free $R$-bialgebra if $A$ is a free $R$-module. We will say that a functor  of proquasi-coherent bialgebras $\mathbb B\in\mathfrak F$ is a functor of quasi-coherent bialgebras if $\mathbb B$ is
a quasi-coherent $\mathcal R$-module. Observe that if
$\mathcal A^*\in\mathfrak F$ is a functor of $\mathcal R$-algebras then
it is a proquasi-coherent algebra, by Proposition \ref{5.9}.

\begin{proposition} \label{5.24} The functors $A \rightsquigarrow \mathcal A$ and $\mathcal A \rightsquigarrow \mathcal A(R)$ establish an equivalence between the category of free $R$-bialgebras and the category of functors of $\mathcal R$-quasi-coherent bialgebras.\end{proposition}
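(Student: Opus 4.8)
The plan is to use the already-established equivalence between $R$-modules and quasi-coherent $\mathcal R$-modules ($M \rightsquigarrow \mathcal M$, $\mathcal M \rightsquigarrow \mathcal M(R)$) and to show that it transports bialgebra structures in both directions. First I would record the two module-level facts that make the dictionary work for a free $R$-module $A$: by the Note characterizing quasi-coherent members of $\mathfrak F$, $\mathcal A \in \mathfrak F$ iff $A$ is free, and likewise $\mathcal A^{*}=\prod_J\mathcal R \in \mathfrak F$; and, crucially, the computation carried out in the proof of Theorem \ref{F8.7} gives $(\mathcal A^{*}\otimes \mathcal A^{*})^{*} = (\prod_J\mathcal R \otimes \prod_J \mathcal R)^{*} = (\oplus_J\mathcal R) \otimes (\oplus_J\mathcal R) = \mathcal A \otimes \mathcal A$, the quasi-coherent module associated with $A\otimes_R A$. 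Combined with reflexivity ($\mathcal A^{**}=\mathcal A$, Theorem \ref{reflex}), this ensures that the completed tensor product $\mathcal A\tilde\otimes\mathcal A$ appearing in the definition of comultiplication coincides with the ordinary $\mathcal A\otimes\mathcal A$, and that taking $R$-points returns $A\otimes_R A$.

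Second, I would translate the three defining conditions of a functor of quasi-coherent bialgebra on $\mathcal A$. The condition that $\mathcal A$ be a functor of $\mathcal R$-algebras corresponds, through $\Hom_{\mathcal R}(\mathcal A\otimes\mathcal A,\mathcal A)=\Hom_R(A\otimes_R A, A)$ and the unit analogue, to an $R$-algebra structure on $A$ (quasi-coherent algebras being automatically proquasi-coherent by Examples \ref{ejemplor}). The condition that $\mathcal A^{*}$ be a functor of algebras dualizes, via the identification above, to a comultiplication $c:=m^{*}\colon \mathcal A \to \mathcal A\otimes\mathcal A$ and a counit $e:=u^{*}\colon \mathcal A\to\mathcal R$; associativity and the unit axiom for $m,u$ dualize to coassociativity and the counit axiom, so at $R$-points we recover an $R$-coalgebra structure on $A$ (proquasi-coherence of $\mathcal A^{*}$ being then free by Proposition \ref{5.9}). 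Finally, the remaining condition --- that the duals $c,e$ of the multiplication and unit of $\mathcal A^{*}$ be morphisms of functors of algebras --- is, on $R$-points, exactly the statement that $c_R\colon A\to A\otimes_R A$ and $e_R\colon A\to R$ are morphisms of $R$-algebras, which is the classical compatibility defining a bialgebra. Hence a functor of quasi-coherent bialgebra structure on $\mathcal A$ is the same datum as a free $R$-bialgebra structure on $A$; this gives essential surjectivity and the well-definedness of both functors, since a quasi-coherent bialgebra $\mathbb B$ is $\mathcal A$ with $A=\mathbb B(R)$ free.

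Third, for morphisms I would compare the two notions directly. A morphism of free $R$-bialgebras $f\colon A\to A'$ is an $R$-linear map that is simultaneously a morphism of algebras and of coalgebras, while a morphism of functors of bialgebras $\mathcal A\to\mathcal A'$ is a morphism of $\mathcal R$-modules that is an algebra morphism and whose dual is an algebra morphism. Since $\Hom_{\mathcal R}(\mathcal A,\mathcal A')=\Hom_R(A,A')$, since being an algebra morphism of quasi-coherent functors is detected at $R$-points, and since the dual being an algebra morphism translates (by the same dualization as above) to being a coalgebra morphism at $R$-points, the two subsets of $\Hom_R(A,A')$ coincide. This yields full faithfulness and, together with the second step, the asserted equivalence.

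The step I expect to be the main obstacle is the first one: pinning down that, for free $A$, the completed tensor product $(\mathcal A^{*}\otimes\mathcal A^{*})^{*}$ is genuinely the ordinary $\mathcal A\otimes\mathcal A$ associated with $A\otimes_R A$, so that the dualized comultiplication lands in $A\otimes_R A$ rather than in a strictly larger completion. Note that the identification is asymmetric --- $(\mathcal A\otimes\mathcal A)^{*}=\prod_{J\times J}\mathcal R$ is strictly larger than $\mathcal A^{*}\otimes\mathcal A^{*}$ for infinite $J$ --- so the formula from Theorem \ref{F8.7} must be invoked in the correct direction. Everything else is a routine but careful transport of the associativity, coassociativity, unit and counit diagrams through the equivalence and the duality, all of which are determined by their behaviour on $R$-points because the functors involved are quasi-coherent.
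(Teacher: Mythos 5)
The paper actually states Proposition \ref{5.24} without any proof, so there is nothing to compare against line by line; judged on its own, your argument is correct and is exactly the routine transport of structure that the surrounding text intends (the Note characterizing free modules as the quasi-coherent members of $\mathfrak F$, Proposition \ref{5.9} for proquasi-coherence of $\mathcal A^*$, and detection of algebra/coalgebra axioms at $R$-points via ${\rm Hom}_{\mathcal R}(\mathcal M,\mathcal N)={\rm Hom}_R(M,N)$). One remark: the step you single out as the main obstacle, namely $(\mathcal A^*\otimes\mathcal A^*)^*=\mathcal A\otimes\mathcal A$, is already recorded in the paper as a displayed general identity $(\mathcal C^*\otimes\overset{n}{\cdots}\otimes\mathcal C^*)^*=\mathcal C\otimes\overset{n}{\cdots}\otimes\mathcal C$ for an arbitrary $R$-module $C$ (just before the definition of algebra scheme), so you need not restrict to the free case or route it through Theorem \ref{F8.7}, though your derivation via $(\prod_J\mathcal R\otimes\prod_J\mathcal R)^*=(\oplus_J\mathcal R)\otimes(\oplus_J\mathcal R)$ is equally valid and your observation about the asymmetry with $(\mathcal A\otimes\mathcal A)^*=\prod_{J\times J}\mathcal R$ is a genuinely useful point to make explicit.
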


\begin{theorem} \label{dualbial} Let ${\mathcal C}_{\mathfrak F-Bialg.}$ be the category of
functors $\mathbb B\in\mathfrak F$ of proquasi-coherent bialgebras. The functor ${\mathcal C}_{\mathfrak F-Bialg.}\rightsquigarrow{\mathcal C}_{\mathfrak F-Bialg.}$, $\mathbb B \rightsquigarrow {\mathbb B}^*$ is a categorical anti-equivalence.
\end{theorem}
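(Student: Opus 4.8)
The plan is to establish the anti-equivalence by exhibiting a quasi-inverse and checking the unit and counit are natural isomorphisms. The candidate quasi-inverse is of course the same functor $\mathbb B\rightsquigarrow\mathbb B^*$, so the whole statement reduces to two things: first, that $\mathbb B\rightsquigarrow\mathbb B^*$ really lands in ${\mathcal C}_{\mathfrak F-Bialg.}$ (i.e. the category is stable under $(-)^*$), and second, that $\mathbb B^{**}=\mathbb B$ naturally, compatibly with all the bialgebra structure. The second point is where reflexivity does the heavy lifting: since $\mathbb B\in\mathfrak F$, the Note after Definition \ref{defF} gives $\mathbb B=\mathbb B^{**}$ as functors of $\mathcal R$-modules, so the real content is that this identification respects the algebra, coalgebra, and bialgebra structures.

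\smallskip

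First I would verify well-definedness on objects. Given $\mathbb B\in{\mathcal C}_{\mathfrak F-Bialg.}$, I must show $\mathbb B^*\in{\mathcal C}_{\mathfrak F-Bialg.}$. By Theorem \ref{F8.7} (with $\mathbb M=\mathcal R$) or directly by Lemma \ref{uf}, $\mathbb B^*\in\mathfrak F$. By hypothesis $\mathbb B^*$ is a functor of proquasi-coherent algebras, and $(\mathbb B^*)^*=\mathbb B^{**}=\mathbb B$ is also a functor of proquasi-coherent algebras (again by hypothesis). It remains to see that the dual morphisms of the multiplication and unit of $(\mathbb B^*)^*=\mathbb B$ are morphisms of functors of algebras; but the multiplication and unit of $\mathbb B$ are, by definition of a bialgebra structure on $\mathbb B$ together with the symmetric roles played by $\mathbb B$ and $\mathbb B^*$ in Definition \ref{bialgebras}, exactly the data whose duals give the comultiplication on $\mathbb B^*$ — and the condition that these be algebra morphisms is precisely the statement that the comultiplication $\mathbb B^*\to\mathbb B^*\tilde\otimes\mathbb B^*$ is an algebra morphism, which is symmetric to the condition already assumed for $\mathbb B$. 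So the bialgebra axioms for $\mathbb B$ and for $\mathbb B^*$ are literally interchanged by dualization, and $\mathbb B^*\in{\mathcal C}_{\mathfrak F-Bialg.}$.

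\smallskip

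Next I would check functoriality on morphisms and the anti-equivalence. A morphism of bialgebras $f\colon\mathbb B\to\mathbb B'$ is, by Definition \ref{bialgebras}, a morphism of $\mathcal R$-modules such that $f$ and $f^*\colon{\mathbb B'}^*\to\mathbb B^*$ are algebra morphisms; this condition is manifestly symmetric, so $f^*$ is again a morphism of bialgebras, and $(-)^*$ is a contravariant functor. That it is an anti-equivalence then follows because applying $(-)^*$ twice returns $\mathbb B^{**}=\mathbb B$ and $f^{**}=f$: the identification $\mathbb M=\mathbb M^{**}$ for $\mathbb M\in\mathfrak F$ is natural (it is the canonical biduality morphism, an isomorphism by Lemma \ref{uf}), so the double-dual functor is naturally isomorphic to the identity on ${\mathcal C}_{\mathfrak F-Bialg.}$. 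One must confirm that this canonical isomorphism $\mathbb B\cong\mathbb B^{**}$ is itself a morphism of bialgebras, i.e. intertwines the multiplication and comultiplication; this holds because the algebra structure transported to $\mathbb B^{**}$ via the biduality map agrees with the given one (by the uniqueness in Proposition \ref{2.4}, whose closure property forces the induced $m'$ to match), and dually for the comultiplication.

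\smallskip

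\textbf{The main obstacle} I expect is not the module-level biduality, which is handed to us by Lemma \ref{uf}, but the bookkeeping that the canonical isomorphism $\mathbb B\xrightarrow{\sim}\mathbb B^{**}$ is compatible simultaneously with both the algebra and the coalgebra structures — in other words, that dualizing twice recovers not merely the underlying functor of modules but the full bialgebra. The delicate point is that the bialgebra structure involves $\tilde\otimes$ (the closure $(\,\cdot^*\otimes\cdot^*)^*$) rather than the naive tensor product, so I would need to invoke Theorem \ref{prodspec} and the associativity remark $\mathbb A_1\tilde\otimes(\mathbb A_2\tilde\otimes\cdots)=\mathbb A_1\tilde\otimes\cdots\tilde\otimes\mathbb A_n$ to know that the comultiplication $\mathbb B\to\mathbb B\tilde\otimes\mathbb B$ dualizes correctly to the multiplication $\mathbb B^*\otimes\mathbb B^*\to\mathbb B^*$ and back, with the coassociativity and counit axioms for $\mathbb B$ matching the associativity and unit axioms for $\mathbb B^*$ under the adjunction $\Hom_{\mathcal R}(\mathbb B^*\otimes\mathbb B^*,\mathbb B^*)=\Hom_{\mathcal R}(\mathbb B,\mathbb B\tilde\otimes\mathbb B)$ recorded in the Note after Definition \ref{bialgebras}. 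Once that dictionary is set up, every verification reduces to the symmetry already built into Definition \ref{bialgebras}.
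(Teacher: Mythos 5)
Your overall skeleton matches the paper's: the category is closed under $(-)^*$, reflexivity gives $\mathbb B^{**}=\mathbb B$ on the nose, morphisms dualize by the manifest symmetry of the morphism condition in Definition \ref{bialgebras}, and the only real work is showing that $\mathbb B^*$ again satisfies the object-level axioms. But at exactly that point you assert what must be proved. Definition \ref{bialgebras} is \emph{not} literally symmetric under dualization: it assumes that ${m'}^*\colon \mathbb B\to(\mathbb B^*\otimes\mathbb B^*)^*=\mathbb B\tilde\otimes\mathbb B$ is a morphism of functors of algebras (where $m'$ is the multiplication of $\mathbb B^*$), whereas for $\mathbb B^*$ to be a proquasi-coherent bialgebra one needs that $m^*\colon\mathbb B^*\to(\mathbb B\otimes\mathbb B)^*=\mathbb B^*\tilde\otimes\mathbb B^*$ is a morphism of functors of algebras (where $m$ is the multiplication of $\mathbb B$). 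These are different statements about different maps, and their equivalence is the classical (nontrivial, if standard) lemma that the comultiplication is an algebra morphism if and only if the multiplication is a coalgebra morphism. Your phrase ``literally interchanged by dualization'' skips this, and it is the entire content of the paper's proof.

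Concretely, the paper encodes the hypothesis as a commutative square
$$m\circ({m'}^*_{13}\otimes{m'}^*_{24}) = {m'}^*\circ m \quad\text{(up to the swap }\sigma(b_1\otimes b_2\otimes b_3\otimes b_4)=b_1\otimes b_3\otimes b_2\otimes b_4\text{)},$$
and then dualizes that square. The dualization is itself delicate, because $\tilde\otimes$ is not the naive dual of $\otimes$: one lands in $(\mathbb B^*\otimes\mathbb B^*)^{**}$ and $(\mathbb B^*\otimes\mathbb B^*\otimes\mathbb B^*\otimes\mathbb B^*)^{**}$ and must factor through the canonical maps from $\mathbb B^*\otimes\mathbb B^*$ using the closure property (Proposition \ref{3.2} / Lemma \ref{harto}) to read off that $m^*$ is an algebra morphism. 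You correctly flag the $\otimes$-versus-$\tilde\otimes$ issue as ``the main obstacle,'' but you attach it to the wrong place (the compatibility of the biduality isomorphism, which is vacuous here since $\mathbb B=\mathbb B^{**}$ is an equality and $m^{**}=m$) rather than to the derivation of the dual compatibility axiom. To complete your argument you would need to supply this diagram chase; as written, the proposal leaves the one nontrivial verification unproved.
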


\begin{proof}  Let $\{\mathbb B,m,u; \mathbb B^*,m',u'\}$  be a functor of bialgebras. Let us only check that $m^*\colon \mathbb B^*\to (\mathbb B\otimes\mathbb B)^*=\mathbb B^*\tilde\otimes\mathbb B^*$ is a morphism of functors of algebras.
By hypothesis, ${m'}^*\colon \mathbb B\to (\mathbb B^*\otimes \mathbb B^*)^*=\mathbb B\tilde\otimes\mathbb B$ is a morphism of functors of algebras.
We have the commutative square:
$$\xymatrix{ \mathbb B \ar[rr]^-{{m'}^*} & &\mathbb B\tilde\otimes \mathbb B \\ \mathbb B \otimes \mathbb B \ar[u]^-m \ar[rr]^-{{m'}^*_{13}\otimes {m'}^*_{24}}& & \mathbb B\tilde \otimes \mathbb B\tilde \otimes \mathbb B\tilde \otimes \mathbb B \ar[u]_-{m\otimes m}} $$
where $({m'}^*_{13}\otimes {m'}^*_{24})(b_1\otimes b_2):=
\sigma({m'}^*(b_1)\otimes {m'}^*(b_2))$ and $\sigma(b_1\otimes b_2\otimes b_3\otimes b_4):=b_1\otimes b_3\otimes b_2\otimes b_4$, for all $b_i\in\mathbb B$.
Taking duals, we obtain the commutative diagram:

$$\xymatrix{ \mathbb B^* \ar[d]_-{m^*} &  &
(\mathbb B^*\otimes \mathbb B^*)^{**} \ar[ll] \ar[d]^-{(m\otimes m)^*} & \mathbb B^*\otimes \mathbb B^* \ar@{-->}[l] \ar@{-->}@/_{5mm}/[lll]_-{m'} \ar@{-->}@/^{12mm}/[ddll]^-{m^*\otimes m^*}
\\ \mathbb B^*\tilde\otimes \mathbb B^* &  &
(\mathbb B^*\otimes \mathbb B^*\otimes  \mathbb B^*\otimes \mathbb B^*)^{**}
\ar[ll] \ar@{-->}[dl] & \\& \mathbb B^*\tilde\otimes \mathbb B^*\tilde\otimes  \mathbb B^*\tilde\otimes \mathbb B^* \ar@{-->}[ul]_-{{m'}_{13}\otimes {m'}_{24}} & &}$$
which says that $m^*$ is a morphism of functors of $\mathcal R$-algebras.

\end{proof}

In \cite[Ch. I, \S 2, 13]{dieudonne}, Dieudonné proves the anti-equivalence between
the category of commutative $K$-bialgebras and the  category of
linearly compact  cocommutative $K$-bialgebras (where $K$ is a field).

Let $C$ be a $\mathcal R$-module. $C$ is a $R$-coalgebra (coassociative with counit) if and only if $\mathcal C^*$ is a functor of algebras: Observe that
$$\aligned (\mathcal C^*\otimes\overset n\cdots\otimes \mathcal C^*)^* & =
\mathbb Hom_{\mathcal R}(\mathcal C^*\otimes\overset n\cdots\otimes \mathcal C^*,\mathcal R)=\mathbb Hom_{\mathcal R}(\mathcal C^*\otimes\overset{n-1}\cdots\otimes \mathcal C^*,\mathcal C)\\ & \underset{\text{\ref{prop4}}}=\mathbb Hom_{\mathcal R}(\mathcal C^*\otimes\overset{n-2}\cdots\otimes \mathcal C^*,\mathcal C\otimes \mathcal C)=\cdots=\mathcal C\otimes\overset n\cdots\otimes \mathcal C\endaligned$$

\begin{definition} The functor of algebras $\mathcal C^*$ is called algebra scheme. If $\mathcal C^*\in\mathfrak F$ is a functor of bialgebras then 
it is called bialgebra scheme.\end{definition}

\begin{notation} Let $\mathbb A$ be a reflexive functor of $\mathcal K$-algebras and
let $\{\mathcal A_i\}$ be the set of quasi-coherent quotients of $\mathbb A$ such that
$\dim_K A_i<\infty$. We denote $\bar{\mathbb A}:=\plim{i} \mathcal A_i$ which
is an algebra scheme because $\mathcal A_i^*$ is quasi-coherent and $\plim{i} \mathcal A_i=(\ilim{i} \mathcal A_i^*)^*$.\end{notation}

\begin{note} Given a functor of algebras $\mathcal C^*\in\mathfrak F$, then
$\bar{\mathcal C^*}=\mathcal C^*$ (see proof of Proposition \ref{5.9}).\end{note}

\begin{proposition} \cite[5.9]{Amel} \label{a5.9} Let $\mathbb A$ be a reflexive functor of $\mathcal K$-algebras. Then,
$$\Hom_{\mathcal K-alg}(\mathbb A,\mathcal C^*)=
\Hom_{\mathcal K-alg}(\bar{\mathbb A},\mathcal C^*)$$
for all algebra schemes $\mathcal C^*$.
\end{proposition}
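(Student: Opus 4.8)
The natural morphism to exploit is the canonical map $\pi\colon\mathbb A\to\bar{\mathbb A}=\plim{i}\mathcal A_i$ into the inverse limit of the finite-dimensional quasi-coherent quotients $\mathcal A_i$ of $\mathbb A$, and the plan is to prove that precomposition with $\pi$ is a bijection $\Hom_{\mathcal K-alg}(\bar{\mathbb A},\mathcal C^*)\to\Hom_{\mathcal K-alg}(\mathbb A,\mathcal C^*)$. First I would record that both functors are D-proquasi-coherent: $\mathbb A$ is reflexive, hence proquasi-coherent by Theorem~\ref{RPQ} and D-proquasi-coherent by Proposition~\ref{P6}, while $\bar{\mathbb A}$ is proquasi-coherent by construction. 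Then I would reduce to a finite-dimensional target. Since $K$ is a field, the coalgebra $C$ underlying the algebra scheme $\mathcal C^*$ is the directed union of its finite-dimensional subcoalgebras $C_\alpha$ (the classical fundamental theorem of coalgebras), so $\mathcal C^*=\plim{\alpha}\mathcal C_\alpha^*$ with each $\mathcal C_\alpha^*$ a finite-dimensional quasi-coherent algebra. As algebra morphisms into an inverse limit form the inverse limit of the algebra morphisms, it suffices to prove the equality for a finite-dimensional quasi-coherent target $\mathcal D$.

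For surjectivity with such a $\mathcal D$: by Proposition~\ref{invqua6} any $\phi\colon\mathbb A\to\mathcal D$ factors through an epimorphism onto the quasi-coherent algebra attached to $\Ima\phi_K\subseteq D$, which is finite-dimensional, hence $\phi$ factors through one of the $\mathcal A_i$ and so through $\bar{\mathbb A}$ (via $\pi$ and the projection $\bar{\mathbb A}\to\mathcal A_i$). For injectivity I must show a morphism $\psi\colon\bar{\mathbb A}\to\mathcal D$ is determined by $\psi\circ\pi$; applying Proposition~\ref{invqua6} to the D-proquasi-coherent functor of algebras $\bar{\mathbb A}$, each such $\psi$ factors through a finite-dimensional quasi-coherent quotient of $\bar{\mathbb A}$, so everything comes down to matching the finite-dimensional quasi-coherent quotients of $\bar{\mathbb A}$ with those of $\mathbb A$.

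That matching is the hard part, and here is how I would handle it. Because every $A_i$ is finite-dimensional, $\mathcal A_i^*$ is again quasi-coherent, and a filtered direct limit of quasi-coherent modules is quasi-coherent; thus $\ilim{i}\mathcal A_i^*=\mathcal V$ is quasi-coherent with $V=\ilim{i}A_i^*$, whence $\bar{\mathbb A}=(\ilim{i}\mathcal A_i^*)^*=\mathcal V^*$ is a module scheme and, $\mathcal V$ being reflexive by Theorem~\ref{reflex}, $\bar{\mathbb A}^*=\mathcal V$. Given a finite-dimensional quasi-coherent quotient $g\colon\bar{\mathbb A}\twoheadrightarrow\mathcal E$, reflexivity of $\mathcal E$ and $\bar{\mathbb A}$ gives
$$\Hom_{\mathcal K}(\bar{\mathbb A},\mathcal E)=\Hom_{\mathcal K}(\mathcal E^*,\bar{\mathbb A}^*)=\Hom_K(E^*,\ilim{i}A_i^*)=\ilim{i}\Hom_K(E^*,A_i^*),$$
the last step because $E^*$ is finite-dimensional, hence compact for the filtered colimit. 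Therefore $g^*$ factors through some $\mathcal A_i^*\hookrightarrow\mathcal V$, and dualizing back shows $g$ factors as $\bar{\mathbb A}\xrightarrow{q_i}\mathcal A_i\to\mathcal E$ through the projection $q_i$. Since $q_i\circ\pi\colon\mathbb A\to\mathcal A_i$ is the surjective quotient map, $\mathbb A\to\mathcal E$ is surjective, so $\mathcal E$ is one of the $\mathcal A_i$. With this in hand injectivity follows: writing $\psi_1,\psi_2$ as $h_k\circ q_i$ for a common index $i$ (the $\mathcal A_i$ are directed, as the image of $\mathbb A\to\mathcal A_{i_1}\times\mathcal A_{i_2}$ is again a finite-dimensional quasi-coherent quotient), the identity $\psi_1\pi=\psi_2\pi$ forces $h_1=h_2$ because $q_i\circ\pi$ is a surjection and hence an epimorphism of functors, so $\psi_1=\psi_2$. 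I expect the recognition of $\bar{\mathbb A}$ as the module scheme $\mathcal V^*$ together with the compactness step above to be the crux, the remaining bookkeeping being formal.
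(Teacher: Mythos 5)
The paper offers no proof of this proposition: it is quoted from \cite[5.9]{Amel}, so there is no in-paper argument to compare yours against and it must be judged on its own merits. On that basis your proof is correct. The three ingredients are all sound: (i) the reduction to a finite-dimensional quasi-coherent target via the fundamental theorem of coalgebras is the only input external to the paper, and it is valid here since $K$ is a field and inverse limits of functors are computed pointwise, so $\Hom_{\mathcal K-alg}(-,\plim{\alpha}\mathcal C_\alpha^*)=\plim{\alpha}\Hom_{\mathcal K-alg}(-,\mathcal C_\alpha^*)$; (ii) surjectivity via Proposition \ref{invqua6} applies because a reflexive $\mathbb A$ is D-proquasi-coherent by Theorem \ref{RPQ} and Proposition \ref{P6}, and $\Ima\phi_K$ is then a finite-dimensional quasi-coherent algebra quotient, hence one of the $\mathcal A_i$; (iii) for injectivity, the identification $\bar{\mathbb A}=\mathcal V^*$ with $V=\ilim{i}A_i^*$ (a direct limit of quasi-coherent modules is quasi-coherent) together with the finite-dimensionality of the source of $g^*$ correctly shows that every morphism $\bar{\mathbb A}\to\mathcal E$ with $\dim_KE<\infty$ factors through some projection $q_i$, and $q_i\circ\pi$ is surjective on all $S$-points (by the cokernel computation of Lemma \ref{lemaD}, not merely an abstract epimorphism), so it cancels on the right. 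Two small remarks. First, the detour through matching the finite-dimensional quotients of $\bar{\mathbb A}$ with those of $\mathbb A$ is unnecessary: applying your colimit computation directly with $\mathcal E=\mathcal D$ already exhibits every $\psi\colon\bar{\mathbb A}\to\mathcal D$ as $h\circ q_i$, and the directedness of the index set (which you justify correctly via the image of $\mathbb A\to\mathcal A_{i_1}\oplus\mathcal A_{i_2}$) then finishes the cancellation. Second, you should note explicitly that the factorization $h$ obtained in (iii) need not be checked to be an algebra morphism for the injectivity step, since only equality of module morphisms is being cancelled; as written this is implicit but harmless.
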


\begin{theorem} Let $\mathbb B\in\mathfrak F$ be a functor of proquasi-coherent $\mathcal K$-bialgebras. Then, $\bar{\mathbb B}$ is a scheme of bialgebras and

$$\Hom_{\mathcal K-bialg}(\mathbb B,\mathcal C^*)=
\Hom_{\mathcal K-bialg}(\bar{\mathbb B},\mathcal C^*)$$
for all bialgebra schemes $\mathcal C^*$.
\end{theorem}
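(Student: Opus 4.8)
The plan is to first endow $\bar{\mathbb B}$ with a bialgebra scheme structure and then to upgrade the algebra-level statement of Proposition \ref{a5.9} to bialgebras. By Theorem \ref{dualbial}, $\mathbb B^*$ is again a functor of proquasi-coherent bialgebras, so Definition \ref{bialgebras} equips $\mathbb B$ with a comultiplication $c\colon\mathbb B\to\mathbb B\tilde\otimes\mathbb B$ and a counit $e\colon\mathbb B\to\mathcal R$, both morphisms of functors of $\mathcal R$-algebras (they are the duals of the multiplication and unit of $\mathbb B^*$), satisfying the coassociativity and counit identities. Restricting the inverse limit $\mathbb B=\plim{j}\mathcal B_j$ over all quasi-coherent algebra quotients to its finite-dimensional part yields the natural algebra morphism $\iota\colon\mathbb B\to\bar{\mathbb B}$, and I record that $\bar{\mathbb B}=\mathcal D^*$ with $\mathcal D:=\bar{\mathbb B}^*$ quasi-coherent, so that $\bar{\mathbb B}\tilde\otimes\bar{\mathbb B}=(\mathcal D\otimes\mathcal D)^*$ and $\bar{\mathbb B}\tilde\otimes\bar{\mathbb B}\tilde\otimes\bar{\mathbb B}$ are algebra schemes (they are functors of algebras by Theorem \ref{prodspec} and module schemes because $\mathcal D$ is quasi-coherent).

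To transport $c$, I note that $\bar{\mathbb B}\tilde\otimes\bar{\mathbb B}$ is proquasi-coherent (Proposition \ref{5.9}), so the algebra morphism $\mathbb B\otimes\mathbb B\xrightarrow{\iota\otimes\iota}\bar{\mathbb B}\otimes\bar{\mathbb B}\to\bar{\mathbb B}\tilde\otimes\bar{\mathbb B}$ factors, by Theorem \ref{prodspec}, through an algebra morphism $\pi\colon\mathbb B\tilde\otimes\mathbb B\to\bar{\mathbb B}\tilde\otimes\bar{\mathbb B}$. Then $\pi\circ c\colon\mathbb B\to\bar{\mathbb B}\tilde\otimes\bar{\mathbb B}$ is an algebra morphism into an algebra scheme, so Proposition \ref{a5.9} produces a unique algebra morphism $\bar c\colon\bar{\mathbb B}\to\bar{\mathbb B}\tilde\otimes\bar{\mathbb B}$ with $\bar c\circ\iota=\pi\circ c$; similarly $e$ descends to $\bar e\colon\bar{\mathbb B}\to\mathcal R$. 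To check that $(\bar c,\bar e)$ is coassociative and counital I would compare algebra morphisms out of $\bar{\mathbb B}$ into the algebra schemes $\bar{\mathbb B}\tilde\otimes\bar{\mathbb B}\tilde\otimes\bar{\mathbb B}$ and $\bar{\mathbb B}$; by the uniqueness in Proposition \ref{a5.9} it suffices to verify the identities after precomposing with $\iota$, where they reduce to the corresponding axioms for $(c,e)$ on $\mathbb B$ via the relation $\bar c\circ\iota=\pi\circ c$. Dualizing $\bar c$ and $\bar e$ then gives $\mathcal D=\bar{\mathbb B}^*$ an algebra structure compatible with its coalgebra structure, so that $\bar{\mathbb B}$ is a bialgebra scheme.

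For the $\Hom$ equality, Proposition \ref{a5.9} already gives the bijection $g\mapsto g\circ\iota$ between $\Hom_{\mathcal K-alg}(\bar{\mathbb B},\mathcal C^*)$ and $\Hom_{\mathcal K-alg}(\mathbb B,\mathcal C^*)$, so it remains to show that $g$ is a morphism of bialgebras if and only if $g\circ\iota$ is. Since for reflexive functors a morphism of functors of algebras $f$ is a morphism of bialgebras exactly when $f$ commutes with the comultiplications and counits (the dual reformulation of ``$f^*$ is an algebra morphism''), and $\mathcal C^*$ is a bialgebra scheme, I only have to treat the comultiplications, the counits being identical. If $g$ commutes with the comultiplications, then precomposing with $\iota$ and using $\bar c\circ\iota=\pi\circ c$ shows $g\circ\iota$ does too. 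Conversely, if $g\circ\iota$ does, then the two algebra morphisms $c_{\mathcal C^*}\circ g$ and $(g\tilde\otimes g)\circ\bar c$ from $\bar{\mathbb B}$ into the algebra scheme $\mathcal C^*\tilde\otimes\mathcal C^*$, where $c_{\mathcal C^*}$ is the comultiplication of $\mathcal C^*$, coincide after precomposition with $\iota$, hence coincide by Proposition \ref{a5.9}; this yields $\Hom_{\mathcal K-bialg}(\mathbb B,\mathcal C^*)=\Hom_{\mathcal K-bialg}(\bar{\mathbb B},\mathcal C^*)$.

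I expect the main obstacle to be the first part: constructing the bialgebra scheme structure on $\bar{\mathbb B}$, and in particular making precise that $\bar{\mathbb B}\tilde\otimes\bar{\mathbb B}$ is an algebra scheme through which $c$ factors, i.e. that the transported comultiplication really lands in $\bar{\mathbb B}\tilde\otimes\bar{\mathbb B}$ (equivalently, that the finite dual $\mathcal D$ is closed under the multiplication dual to $\bar c$). Once the factorization $\pi$ and the identification of $\bar{\mathbb B}\tilde\otimes\bar{\mathbb B}$ as an algebra scheme are in hand, the descent of the coalgebra axioms and the whole $\Hom$ statement follow formally from the universal property of Proposition \ref{a5.9} and the uniqueness it provides.
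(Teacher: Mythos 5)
Your proposal is correct and follows essentially the same route as the paper: the paper transports the comultiplication by observing $\overline{\mathbb B\tilde\otimes\mathbb B}=\bar{\mathbb B}\tilde\otimes\bar{\mathbb B}$ and invoking Proposition \ref{a5.9}, which is exactly what your factorization of $\pi\circ c$ through $\iota$ accomplishes, and the $\Hom$ equality is then deduced in both cases from the algebra-level bijection together with compatibility with the comultiplications. Your write-up is more explicit about the construction of $\pi$ and the uniqueness arguments, but no new idea is involved.
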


\begin{proof} Given any $\mathbb A_1,\ldots,\mathbb A_n\in\mathfrak F$ proquasi-coherent algebras then $\overline{\mathbb A_1\tilde\otimes\ldots\tilde\otimes \mathbb A_n}=
\bar{\mathbb A_1}\tilde\otimes\ldots\tilde\otimes \bar{\mathbb A_n}$, by Proposition \ref{a5.9}.
Then, the comultiplication morphism $\mathbb B\to \mathbb B\tilde\otimes \mathbb B$ defines a comultiplication morphism $\bar{\mathbb B}\to \bar{\mathbb B}\tilde\otimes \bar{\mathbb B}$, and
$\bar{\mathbb B}$ is a scheme of bialgebras.

Given a morphism of functors of bialgebras $f\colon \mathbb B\to \mathcal C^*$, that is, a morphism of functors of algebras such that the diagram
$$\xymatrix{\mathbb B \ar[r] \ar[d]^-f & \mathbb B\tilde\otimes\mathbb B\ar[d]^-{f\otimes f}\\ \mathcal C^* \ar[r] & \mathcal C^*\tilde\otimes\mathcal C^*}$$
is commutative, the induced morphism of functors algebras $\bar{\mathbb B}\to \mathcal C^*$ is a morphism of functors of bialgebras. Reciprocally, given a morphism of bialgebras $\bar{\mathbb B}\to \mathcal C^*$, the composition morphism
$\mathbb B\to \bar{\mathbb B}\to \mathcal C^*$ is a mosphism of functors of bialgebras.

\end{proof}

\begin{corollary} \label{cor1.7} Let $A$ and $B$ be  $K$-bialgebras. Then,
$$\Hom_{\mathcal K-bialg}(\bar{\mathcal A},{\mathcal B^*})=\Hom_{\mathcal K-bialg}(\bar {\mathcal B},{\mathcal A^*})$$
\end{corollary}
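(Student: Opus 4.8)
The plan is to reduce both Hom-sets to Hom-sets between the quasi-coherent bialgebras $\mathcal A,\mathcal B$ and the bialgebra schemes $\mathcal A^*,\mathcal B^*$, and then to read the symmetry off from the anti-equivalence of Theorem \ref{dualbial}. First I would record the standing hypotheses: since $K$ is a field, every $K$-vector space is free, so $\mathcal A,\mathcal B\in\mathfrak F$; because $A$ and $B$ are $K$-bialgebras, Proposition \ref{5.24} shows that $\mathcal A$ and $\mathcal B$ are functors of quasi-coherent (hence proquasi-coherent) bialgebras, and applying the anti-equivalence $\mathbb B\rightsquigarrow\mathbb B^*$ of Theorem \ref{dualbial} shows that $\mathcal A^*$ and $\mathcal B^*$ are bialgebra schemes. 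Thus all four functors occurring in the statement lie in $\mathcal C_{\mathfrak F-Bialg.}$, and the preceding theorem guarantees that $\bar{\mathcal A}$ and $\bar{\mathcal B}$ are schemes of bialgebras, so both members of the asserted equality are well defined.

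Next I would apply the preceding theorem twice. Taking $\mathbb B=\mathcal A$ (a proquasi-coherent bialgebra) together with the bialgebra scheme $\mathcal C^*=\mathcal B^*$ gives
$$\Hom_{\mathcal K-bialg}(\bar{\mathcal A},\mathcal B^*)=\Hom_{\mathcal K-bialg}(\mathcal A,\mathcal B^*),$$
and, symmetrically, taking $\mathbb B=\mathcal B$ and $\mathcal C^*=\mathcal A^*$ gives $\Hom_{\mathcal K-bialg}(\bar{\mathcal B},\mathcal A^*)=\Hom_{\mathcal K-bialg}(\mathcal B,\mathcal A^*)$. This removes the completions $\bar{\mathcal A},\bar{\mathcal B}$ from the problem and leaves us to prove the equality $\Hom_{\mathcal K-bialg}(\mathcal A,\mathcal B^*)=\Hom_{\mathcal K-bialg}(\mathcal B,\mathcal A^*)$.

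Finally I would invoke the anti-equivalence directly: since $\mathbb B\rightsquigarrow\mathbb B^*$ is a categorical anti-equivalence of $\mathcal C_{\mathfrak F-Bialg.}$ by Theorem \ref{dualbial}, the assignment $f\mapsto f^*$ is a bijection
$$\Hom_{\mathcal K-bialg}(\mathcal A,\mathcal B^*)\longrightarrow\Hom_{\mathcal K-bialg}((\mathcal B^*)^*,\mathcal A^*).$$
As quasi-coherent modules are reflexive, $(\mathcal B^*)^*=\mathcal B^{**}=\mathcal B$ by Theorem \ref{reflex}, so the right-hand side is exactly $\Hom_{\mathcal K-bialg}(\mathcal B,\mathcal A^*)$. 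Chaining the three displayed equalities then yields the claim.

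The argument is in the end a formal manipulation, so there is no deep obstacle; the one point requiring care is the verification in the first paragraph that the four functors are genuinely proquasi-coherent bialgebras and bialgebra schemes, so that both the preceding theorem and the anti-equivalence of Theorem \ref{dualbial} are applicable. Everything there hinges on working over a field, where $A$, $B$ (and their completions) are free and the equivalence of Proposition \ref{5.24} is available; once these hypotheses are checked, the symmetry of the statement is the symmetry of dualization.
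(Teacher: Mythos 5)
Your argument is correct and is essentially the paper's own proof: the paper's one-line chain $\Hom_{\mathcal K-bialg}(\bar{\mathcal A},{\mathcal B^*})=\Hom_{\mathcal K-bialg}({\mathcal A},{\mathcal B^*})=\Hom_{\mathcal K-bialg}({\mathcal B},{\mathcal A^*})=\Hom_{\mathcal K-bialg}(\bar {\mathcal B},{\mathcal A^*})$ uses the preceding theorem to strip the bars and the anti-equivalence of Theorem \ref{dualbial} together with $\mathcal B^{**}=\mathcal B$ for the middle step, exactly as you do. Your additional verification that the four functors lie in $\mathcal C_{\mathfrak F-Bialg.}$ is a welcome explicit check that the paper leaves implicit.
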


\begin{proof} It holds that
$\Hom_{\mathcal K-bialg}(\bar{\mathcal A},{\mathcal B^*})=\Hom_{\mathcal K-bialg}({\mathcal A},{\mathcal B^*})=\Hom_{\mathcal K-bialg}({\mathcal B},{\mathcal A^*})$ $=\Hom_{\mathcal K-bialg}(\bar {\mathcal B},{\mathcal A^*})$.

\end{proof}

\begin{note} \label{cite{E}} The bialgebra $A^\circ :=\Hom_{\mathcal K}(\bar{\mathcal A},\mathcal K)$ is sometimes known as the ``dual bialgebra" of $A$ and Corollary \ref{cor1.7}  says (dually) that the functor assigning to each bialgebra its dual bialgebra is autoadjoint (see \cite[3.5]{E}).
\end{note}

\end{document}